\theoremstyle{plain}
\newtheorem{theor}{Theorem}
\theoremstyle{remark}
\newtheorem{rem}{Remark}
\theoremstyle{plain}
\newtheorem{prop}[theor]{Proposition}
\newtheorem{cor}[theor]{Corollary}
\newtheorem{lemma}[theor]{Lemma}
\newtheorem*{problem}{Problem}
\def\R{{\mathbb R}}
\def\Prob{{\mathbb P}}
\def\N{{\mathbb N}}
\def\Net{{\mathcal N}}
\def\Exp{{\mathbb E}}
\def\M{{\mathcal M}}
\def\Med{{\rm Med}}
\def\Im{{\rm Im}}
\def\supp{{\rm supp}}
\def\Proj{{\rm P}}
\def\spn{{\rm span}}
\def\concf{{\mathcal Q}}
\def\rank{{\rm rank}}
\def\CovM{{\Sigma}}
\def\Sph{{\rm S}}
\def\Graph{{\mathcal G}}
\def\ColorC{{\mathcal C}}
\def\Class{{\mathcal F}}
\def\GL{{\rm GL}}
\def\Id{{\rm Id}}
\newcommand\kmax[2]{\,(#1)\textbf{-}\max\limits_{\ell\in #2}}
\begin{document}

\title{Sample covariance matrices of heavy-tailed distributions}
\author{Konstantin Tikhomirov\\
\small{Department of Math.\ and Stat.\ Sciences, University of Alberta, Canada}\\
\small{Email: ktikhomi@ualberta.ca}}

\maketitle

\begin{abstract}
Let $p>2$, $B\geq 1$, $N\geq n$ and let $X$ be a centered $n$-dimensional random vector
with the identity covariance matrix such that
$\sup\limits_{a\in\Sph^{n-1}}\Exp|\langle X,a\rangle|^p\leq B$.
Further, let $X_1,X_2,\dots,X_N$ be independent copies of $X$, and $\CovM_N:=\frac{1}{N}\sum_{i=1}^N X_i {X_i}^T$
be the sample covariance matrix. We prove that
$$K^{-1}\|\CovM_N-\Id_n\|_{2\to 2}\leq\frac{1}{N}\max\limits_{i\leq N}\|X_i\|^2
+\Bigl(\frac{n}{N}\Bigr)^{1-2/p}\log^4\frac{N}{n}+\Bigl(\frac{n}{N}\Bigr)^{1-2/\min(p,4)}$$
with probability at least $1-\frac{1}{n}$, where
$K>0$ depends only on $B$ and $p$. In particular, for all $p>4$ we obtain
a quantitative Bai--Yin type theorem.
\end{abstract}

\section{Introduction}

Estimation of the covariance matrix of a multidimensional distribution
is a standard problem in statistics.
Assume we have a centered $n$-dimensional random vector $X$
with an unknown covariance matrix $\CovM=\Exp XX^T$,
and $N$ independent copies of $X$ (a sample): $X_1,X_2,\dots,X_N$.
In general, the problem is to construct an estimator for $\CovM$ --- a function
of $X_1,X_2,\dots,X_N$ taking values in the set of $n\times n$ matrices, such that
for certain class of distributions the random matrix produced by the estimator is
close (in some sense) to the actual covariance matrix $\CovM$.
Various restrictions may be imposed on the distribution of $X$.
Recent developments in the subject showed that, under certain assumptions
on the moments of $1$-dimensional projections of $X$, together with some rather strong structural assumptions on $\CovM$,
it is possible to obtain a satisfactory estimator of $\CovM$ even when the size of the sample $N$
is much smaller than the dimension $n$.
There is a vast literature dealing with these questions, which, however, do not have direct connection
with our results. As an example of those developments, we refer to \cite{BL08a,BL08b}.

In this note, we consider the standard estimator ---
{\it the sample covariance matrix}, defined as $\CovM_N:=\frac{1}{N}\sum_{i=1}^N {X_i}{X_i}^T
=\frac{1}{N}{A_N}^T A_N$, where $A_N$ is the $N\times n$ random matrix with rows $X_1,X_2,\dots,X_N$.
The law of large numbers implies that for any distribution with a well-defined covariance matrix we have the convergence
$\CovM_N\stackrel{N\to\infty}{\longrightarrow}\CovM$ a.s.\ entry-wise, hence, in any operator norm.
The question is what size of the sample one should take to approximate the actual covariance matrix
by $\CovM_N$ with a given precision and probability.

Given a random vector $X$, by $F_X$ we shall denote the cdf of $X$.
Let $\Class$ be a class of $n$-dimensional centered distributions
which is closed under invertible linear transformations (i.e.\ $F_{T(X)}\in\Class$ whenever $F_X\in\Class$ and $T\in\GL_n(\R)$),
and let $\delta\in(0,1)$. We want to identify the number $N$ such that for any $F_X\in\Class$
with the covariance matrix $\CovM$,
and for corresponding sample $X_1,X_2,\dots,X_N$ we have
$\|\CovM_N-\CovM\|_{2\to 2}\leq\delta\|\CovM\|_{2\to 2}$ with probability close to one, where $\|\cdot\|_{2\to 2}$
denotes the spectral norm of a matrix, i.e.\ its largest singular value.
It can be easily shown that the question reduces to checking the relation
$\|\CovM_N-\Id_n\|_{2\to 2}\leq\delta$ for {\it isotropic} distributions from $\Class$ i.e.\
those having the identity covariance matrix.
Moreover, the last inequality is equivalent to
$$\sqrt{(1-\delta)N}\leq s_{\min}(A_N)\leq s_{\max}(A_N)\leq \sqrt{(1+\delta)N},$$
where $s_{\min}(A_N)$ and $s_{\max}(A_N)$ are the smallest and the largest singular values of $A_N$
given by
$s_{\min}(A_N):=\inf\limits_{a\in \Sph^{n-1}}\|A_N(a)\|$,
$s_{\max}(A_N):=\sup\limits_{a\in\Sph^{n-1}}\|A_N(a)\|$.

Both limiting and non-limiting properties of the extreme singular values of $A_N$ have
received considerable attention from researchers. Let us refer to the classical
works \cite{YBK88} and \cite{BY93} regarding
almost sure convergence of appropriately normalized singular values
when the coordinates of the underlying distributions are i.i.d.; as well as more recent works
\cite{R99,J01,LPRT05,
RV09,FS10,ALPT10,ALPT11,V12a,MP12,SV13,MP14,KM15,T_IJM,GLPT14,GLPT15,Y14,Y15,PY14,T_AiM,CT15}.
For a more comprehensive list of results, we refer to surveys \cite{RV10} and \cite{V12b}.
Let us remark that the question of approximating the covariance matrix for
{\it log-concave} distributions appeared in geometric functional analysis in connection
with the problem of computing the volume of a convex set given by a separation oracle
(see \cite{KLS95}). That question was considered, in particular, in \cite{B96,R99} and
was completely resolved in \cite{ALPT10,ALPT11}.

The purpose of this note is to establish approximation properties of the sample
covariance matrix under very mild assumptions on the distribution.
Fix for a moment any $p>2$ and $B\geq 1$.
Assume that $X$ is a centered $n$-dimensional random vector with a covariance matrix $\CovM$.
Assume that $X$ satisfies:
$$\Exp|\langle X,a\rangle|^{p}\leq B\bigl(\Exp\langle X,a\rangle^2\bigr)^{p/2}
=B\langle a,\CovM a\rangle^{p/2}\;\mbox{ for all }\;a\in\R^n.$$
The set of all distributions $F_X$ for centered random vectors $X$ satisfying the above condition
will be denoted by $\Class(n,p,B)$.
It is not difficult to check that the class $\Class(n,p,B)$ is closed under invertible linear transformations
(in the sense discussed above).
For isotropic distributions, the above condition is simplified to
$\Exp|\langle X,a\rangle|^p\leq B$ for all $a\in\Sph^{n-1}$.

The main result of the note is the following theorem:
\begin{theor}\label{t: main}
There is a non-increasing function $\nu:(2,\infty)\to\R_+$ with the following property:
Let $p>2$, $B\geq 1$, and assume that $N\geq 2n$.
Further, let $X$ be a centered $n$-dimensional
random vector with covariance matrix $\CovM$, whose distribution belongs to the class $\Class(n,p,B)$.
Let $X_1,X_2,\dots,X_N$ be independent copies of $X$, and let $\CovM_N=\frac{1}{N}\sum_{i=1}^N X_i{X_i}^T$.
Then the sample covariance matrix $\CovM_N$ satisfies
$$\nu(p)^{-1}\frac{\|\CovM_N-\CovM\|_{2\to 2}}{\|\CovM\|_{2\to 2}}\leq
\frac{1}{N}\max\limits_{i\leq N}\langle X_i,\CovM^{-1} X_i\rangle
+B^{2/p}\Bigl(\frac{n}{N}\Bigr)^{\frac{p-2}{p}}\log^4\frac{N}{n}+B^{2/p}
\Bigl(\frac{n}{N}\Bigr)^{\frac{\min(p,4)-2}{\min(p,4)}}
$$
with probability at least $1-\frac{1}{n}$.
\end{theor}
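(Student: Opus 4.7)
The plan is to first reduce to the isotropic case $\CovM=\Id_n$: since the class $\Class(n,p,B)$ is invariant under invertible linear maps, applying $\CovM^{-1/2}$ transforms the general statement into the special one where $\langle X_i,\CovM^{-1}X_i\rangle=\|X_i\|^2$ and $\|\CovM\|_{2\to 2}=1$. In this setting the spectral norm of $\CovM_N-\Id_n$ rewrites as $\sup_{a\in\Sph^{n-1}}\bigl|\frac{1}{N}\sum_{i\leq N}\langle X_i,a\rangle^2-1\bigr|$, so the task becomes a uniform estimate for a quadratic process on the sphere.

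The main tool is a single truncation combined with nets and chaining. Fix a level $M>0$ (eventually $M\sim B^{1/p}(N/n)^{1/p}$), and for each $a\in\Sph^{n-1}$ split $\langle X_i,a\rangle^2=Y_i(a)+Z_i(a)$ with $Y_i(a):=\langle X_i,a\rangle^2\mathbf{1}_{\{|\langle X_i,a\rangle|\leq M\}}$ and $Z_i(a):=\langle X_i,a\rangle^2-Y_i(a)$. This yields three contributions to $\frac{1}{N}\sum_i\langle X_i,a\rangle^2-1$: the deterministic bias $\Exp Y_1(a)-1$, the centered empirical bulk $\frac{1}{N}\sum_i(Y_i(a)-\Exp Y_i(a))$, and the raw outlier sum $\frac{1}{N}\sum_i Z_i(a)$. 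The bias is bounded by $BM^{-(p-2)}$ through the $p$-th moment hypothesis, and with $M\sim(N/n)^{1/p}$ it produces (up to $B^{2/p}$) the non-logarithmic term $(n/N)^{(p-2)/p}$; this coincides with $(n/N)^{(\min(p,4)-2)/\min(p,4)}$ when $p\leq4$. For the centered bulk I would apply Bernstein's inequality with the second-moment bound $\Exp Y_i(a)^2\leq B$ when $p\geq 4$ and $\Exp Y_i(a)^2\leq BM^{4-p}$ when $2<p<4$, then extend the pointwise estimate to all of $\Sph^{n-1}$ via an $\varepsilon$-net of cardinality $e^{Cn}$; for $p\geq4$ this produces the CLT-type rate $\sqrt{n/N}$, which matches $(n/N)^{(\min(p,4)-2)/\min(p,4)}$ in that regime.

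The outlier piece $\frac{1}{N}\sum_i Z_i(a)$ is where the random quantity $\frac{1}{N}\max_{i\leq N}\|X_i\|^2$ enters, via the pointwise bound $\frac{1}{N}\sum_i Z_i(a)\leq\frac{1}{N}\max_{i\leq N}\|X_i\|^2\cdot\card\{i:|\langle X_i,a\rangle|>M\}$. The nontrivial task is to prove that the counting functional $a\mapsto\card\{i:|\langle X_i,a\rangle|>M\}$ is of order $O(n)$ up to logarithmic factors, uniformly in $a\in\Sph^{n-1}$, with high probability. A natural approach is a dyadic decomposition at the output level, controlling $\card\{i:2^kM<|\langle X_i,a\rangle|\leq2^{k+1}M\}$ through Markov's inequality against the $p$-th moment, combined with a multi-resolution $\varepsilon$-net argument on $\Sph^{n-1}$. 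The cascade of scales, both in $\R$ (dyadic levels) and on the sphere (chaining), is what generates the polylogarithmic factor $\log^4(N/n)$.

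The principal obstacle I anticipate is precisely this uniform control of the outlier-count functional: being a discontinuous function of $a$, it is not directly amenable to a Lipschitz or single-scale net argument, and a careful chaining that correctly balances the net cardinalities $(C/\varepsilon)^n$ against the moment-based level counts is required. Once the three contributions --- bias, centered bulk, and outlier --- have been uniformly controlled, summing them under the choice $M\sim B^{1/p}(N/n)^{1/p}$ and budgeting the failure probability at $1/n$ across the union bounds yields the three terms in the stated inequality.
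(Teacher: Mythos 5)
Your reduction to the isotropic case and your treatment of the bias and of the centered truncated bulk are fine, and indeed they produce the two non-random terms of the right order (in fact without logarithms). The genuine gap is in the outlier part, and it is twofold. First, even granting your target estimate that $\card\{i:|\langle X_i,a\rangle|>M\}=O(n\,\mathrm{polylog})$ uniformly in $a$, your assembly bounds the outlier contribution by $\frac{1}{N}\max_{i\leq N}\|X_i\|^2\cdot O(n\log^4\frac{N}{n})$, which overshoots the first term of the theorem by a factor of order $n$ (recall $\Exp\|X\|^2=n$, so $\max_i\|X_i\|^2\gtrsim n$ typically; e.g.\ for $\|X\|\leq C\sqrt n$ a.s.\ and $N\sim n^2$ your bound is of constant order while the theorem gives $O(n^{-1/2})$ up to logs). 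The statement requires the maximum norm to enter \emph{additively and only once}: the quantity that must be controlled is
$\sup_{a\in\Sph^{n-1}}\sum_{i\leq n}\kmax{i}{[N]}\langle X_\ell,a\rangle^2$, i.e.\ the quantity $f(n,[N])$ of \eqref{eq: f def}, and the needed bound is $f(n,[N])\lesssim \max_i\|X_i\|^2+B^{2/p}n(N/n)^{2/p}\log^4\frac{N}{n}$ (Proposition~\ref{p: f est f}); the crude product ``$\max$ times count'' cannot recover this no matter how sharp the count estimate is, because only one sample vector can be nearly aligned with a given $a$, and the interaction of the remaining outliers is governed by the off-diagonal Gram entries $\langle X_i,X_j\rangle$, which your decomposition never sees.

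Second, the uniform count bound itself is not attainable by the route you sketch. In the critical regime where the expected number of exceedances per direction lies between $1$ and $n$ (levels $2^kM$ with $NB(2^kM)^{-p}\ll n$), the pointwise probability that $m$ exceedances occur is only about $\binom{N}{m}\bigl(B(2^kM)^{-p}\bigr)^m$, which for $m\ll n$ is far too large to survive a union bound over a net of cardinality $e^{cn}$ on $\Sph^{n-1}$; moreover the functional is controlled not through counts alone but through suprema over $m$-sparse combinations of the $X_i$, where an adversarial $a$ in the span of a few sample vectors boosts several inner products simultaneously. This is exactly the obstruction the paper's new ingredients are designed to beat: chaining over sparse vectors with the small support-preserving nets of Lemma~\ref{l: s-p net}, the Sparsifying Lemma~\ref{l: s-r} to shrink supports (hence net cardinalities) at each step, decoupling over random subsets $I\subset[N]$, and the coloring of the sample from Section~\ref{s: coloring} to truncate the inner products $\langle X_i,X_j\rangle$ that would otherwise be bounded only by $\max_i\|X_i\|^2$. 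You correctly identify the uniform control of the exceedance structure as the principal obstacle, but the plan as stated neither closes that step nor, even if it did, would it combine into the claimed inequality.
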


\bigskip

Note that the right-hand side of the above expression
depends on the precision matrix $\CovM^{-1}$.
As an additional assumption on the distribution,
one can make sure that
$\frac{1}{N}\max\limits_{i\leq N}\langle X_i,\CovM^{-1} X_i\rangle$
is typically smaller by the order of magnitude than the remaining summands.
Such an assumption implies that $X$ is concentrated
in the norm $\sqrt{\langle\cdot,\CovM^{-1}\cdot\rangle}$.
As an example, assuming that $\|\CovM^{-1}\|_{2\to 2},\|\CovM\|_{2\to 2}\leq C'$
and $\|X\|\leq C\sqrt{n}$ with very large probability for some constants $C,C'>0$, we
get $\langle X,\CovM^{-1} X\rangle\leq C'\|X\|^2\leq C'C^2n$
with high probability, so that the summand $\frac{1}{N}\max\limits_{i\leq N}\langle X_i,\CovM^{-1} X_i\rangle$
can be disregarded.

\bigskip

For $p\in(2,4]$, the last summand in the estimate of Theorem~\ref{t: main} is dominated by the second one,
and we can rewrite the conclusion
of the theorem as
$$\Prob\Bigl\{\nu(p)^{-1}\frac{\|\CovM_N-\CovM\|_{2\to 2}}{\|\CovM\|_{2\to 2}}\leq
\frac{1}{N}\max\limits_{i\leq N}\langle X_i,\CovM^{-1} X_i\rangle
+B^{2/p}\Bigl(\frac{n}{N}\Bigr)^{\frac{p-2}{p}}\log^4\frac{N}{n}\Bigr\}\geq 1-\frac{1}{n}.
$$
On the other hand, since $\log^4\frac{N}{n}$
grows with $N$ slower than any positive power of $\frac{N}{n}$,
for $p>4$ we can essentially disregard the second
summand in the estimate of Theorem~\ref{t: main}.
Let us provide a separate statement, which we formulate for isotropic distributions.
\begin{cor}\label{c: main}
There is a non-increasing function $\widetilde\nu:(4,\infty)\to\R_+$ with the following property:
Let $p>4$, $B\geq 1$, and assume that $N\geq 2n$.
Let $X$ be a centered isotropic vector with $\sup\limits_{a\in\Sph^{n-1}}\Exp|\langle X,a\rangle|^p\leq B$,
and let $X_1,X_2,\dots,X_N$ be its independent copies.
Then the sample covariance matrix $\CovM_N=\frac{1}{N}\sum_{i=1}^N X_i{X_i}^T$ satisfies
$$\Prob\Bigl\{\widetilde\nu(p)^{-1}\|\CovM_N-\Id_n\|_{2\to 2}\leq
\frac{1}{N}\max\limits_{i\leq N}\|X_i\|^2+B^{2/p}\sqrt{\frac{n}{N}}\Bigr\}\geq 1-\frac{1}{n}.$$
\end{cor}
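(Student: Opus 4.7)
The plan is to derive Corollary~\ref{c: main} as a direct specialization of Theorem~\ref{t: main} to isotropic distributions, followed by a short calculation that absorbs the logarithmic middle summand into the $\sqrt{n/N}$ term.

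First, I would apply Theorem~\ref{t: main} with $\CovM = \Id_n$; then $\|\CovM\|_{2 \to 2} = 1$ and $\langle X_i, \CovM^{-1} X_i\rangle = \|X_i\|^2$. Since $p > 4$ gives $\min(p,4) = 4$, the third summand in the theorem simplifies to $B^{2/p}\sqrt{n/N}$, so with probability at least $1-\frac{1}{n}$,
$$\nu(p)^{-1}\|\CovM_N - \Id_n\|_{2\to 2} \leq \frac{1}{N}\max_{i\leq N}\|X_i\|^2 + B^{2/p}\Bigl(\frac{n}{N}\Bigr)^{(p-2)/p}\log^4\frac{N}{n} + B^{2/p}\sqrt{\frac{n}{N}}.$$

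Next, I would control the middle term. Writing $t := N/n \geq 2$, the ratio of the middle term to $B^{2/p}\sqrt{n/N}$ is exactly $t^{-(p-4)/(2p)}\log^4 t$. Since the exponent $(p-4)/(2p)$ is strictly positive for $p > 4$, any polynomial decay dominates $\log^4 t$ on $[2,\infty)$; an elementary optimization locates the maximum at $t^\ast=e^{8p/(p-4)}$, giving a finite constant $C_p$ of order $\bigl(p/(p-4)\bigr)^4$. Setting $\widetilde\nu(p):=(1+C_p)\,\nu(p)$ then yields the corollary, and monotonicity is automatic: both $\nu$ and $C_p$ are non-increasing on $(4,\infty)$, so $\widetilde\nu$ is as well. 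The only mild nuisance is that $C_p \to \infty$ as $p \to 4^+$, but this only affects the constant and not the domain, which is an open half-line.

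There is no substantive obstacle here: the entire content of the corollary is the observation that once $p$ exceeds $4$, the strict positivity of the exponent gap $(p-2)/p - 1/2 = (p-4)/(2p)$ renders the $\log^4$ factor harmless and the middle summand is absorbed into the third.
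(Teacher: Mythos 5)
Your proposal is correct and follows essentially the same route as the paper, which derives the corollary from Theorem~\ref{t: main} by specializing to $\CovM=\Id_n$ and noting (as you verify quantitatively) that for $p>4$ the middle summand $B^{2/p}(n/N)^{(p-2)/p}\log^4\frac{N}{n}$ is dominated by $C_pB^{2/p}\sqrt{n/N}$ since $\log^4 t$ grows slower than $t^{(p-4)/(2p)}$. Your explicit bound on $C_p$ and the monotonicity check for $\widetilde\nu$ fill in the details the paper leaves as a remark.
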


\bigskip

In case when the coordinates of the random vector $X$ are i.i.d.\ centered
random variables with a bounded fourth moment, the well known
result of Z.D.~Bai and Y.Q.~Yin \cite{BY93} implies that
$$\|\CovM_N-\Id_n\|_{2\to 2}
= O\Bigl(\sqrt{\frac{n}{N}}\Bigr)$$
with probability close to one. In this connection,
Corollary~\ref{c: main} can be viewed as a Bai--Yin type estimate
for quite general class of distributions.

\bigskip

Let us make some further remarks.
For $p>2$, $B\geq 1$ and for all isotropic distributions from $\Class(n,p,B)$,
Theorem~\ref{t: main} provides the following bound for the extreme singular values of matrix $A_N$:
\begin{align*}
&N-K\max\limits_{i\leq N}\|X_i\|^2-KN\Bigl(\frac{n}{N}\Bigr)^{\frac{p-2}{p}}
\log^4\frac{N}{n}
\leq s_{\min}(A_N)^2\\
&\hspace{2cm}\leq s_{\max}(A_N)^2
\leq 1+K\max\limits_{i\leq N}\|X_i\|^2+KN\Bigl(\frac{n}{N}\Bigr)^{\frac{p-2}{p}}
\log^4\frac{N}{n},\;\;\;\mbox{if $p\leq 4$,}
\end{align*}
and
\begin{align*}
N-K\max\limits_{i\leq N}\|X_i\|^2
-K\sqrt{nN}
&\leq s_{\min}(A_N)^2\\
&\leq s_{\max}(A_N)^2
\leq N+K\max\limits_{i\leq N}\|X_i\|^2
+K\sqrt{nN},\;\;\;\mbox{if $p> 4$}
\end{align*}
with probability $\geq 1-\frac{1}{n}$,
where $K=K(p,B)>0$ depends only on $p,B$.
Note that better estimates for the {\it smallest} singular value were previously obtained
in \cite{KM15} and later strengthened in \cite{Y14,Y15}.
The papers \cite{SV13} and \cite{KM15} were apparently the first ones where lower bounds
for the smallest singular value were given
in quite a general setting without any restrictions on the magnitude of the
matrix norm $\|A_N\|_{2\to 2}$.
The novelty of our work consists in proving the upper bound for the largest singular value.
This problem has been extensively studied in the literature.
In \cite{ALPT10,ALPT11}, an analog of Theorem~\ref{t: main} was proved for distributions
with sub-exponential tails of one-dimensional projections.
In paper \cite{SV13}, just ``$2+\varepsilon$'' moment assumptions were employed, but,
as an additional requirement, the authors assumed certain tail decay for {\it all} projections (of any rank)
of the random vector.
In \cite{MP14}, an equivalent of Theorem~\ref{t: main} was proved under $8+\varepsilon$
moment assumption, and, finally, in \cite{GLPT15}, the result of \cite{MP14} was extended to $p>4$,
however the authors of \cite{GLPT15} did not obtain a Bai--Yin type estimate in the regime $4<p\leq 8$.

Thus, our input is two-fold: {\it first, we extend the theorems of \cite{MP14} and \cite{GLPT15}
to the range $p>2$, and, second, in the regime $p>4$ we obtain a Bai--Yin type estimate
for the largest singular value}. The factor ``$\log^4\frac{N}{n}$'' in the second summand of our bound,
which comes into play in the regime $2<p\leq 4$, seems excessive. We believe that
some essential new arguments are required to completely eliminate the log-factor, if it is at all possible.

\bigskip

As another illustration, let us consider a particular
form of the above theorem, which provides an estimate for the spectral norm of a square random matrix
with i.i.d.\ columns under very mild assumptions on the distribution:
\begin{theor}
Let $p>2$ and $B\geq 1$. Then there exist $K_1=K_1(p,B)$ and $K_2=K_2(p,B)$ depending only on
$p$ and $B$ with the following property:
Let $n\geq K_1$ and let $A$ be an $n\times n$ random matrix with i.i.d.\ columns $Y_1,Y_2,\dots,Y_n$,
where each $Y_i$ is a centered isotropic random vector satisfying
$\sup\limits_{a\in\Sph^{n-1}}\Exp|\langle Y_i,a\rangle|^p\leq B$.
Then the spectral norm of $A$ can be estimated as
$$\|A\|_{2\to 2}\leq K_2\max\limits_{i\leq N}\|Y_i\|$$
with probability at least $1-\frac{4}{n}$.
\end{theor}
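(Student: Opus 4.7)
The plan is to match the trivial lower bound $\|A\|_{2\to 2}\geq\max_{i\leq n}\|Y_i\|$ (immediate from $\|Ae_i\|=\|Y_i\|$) with an upper bound derived from Theorem~\ref{t: main}. Since $\|A\|_{2\to 2}^2=\|\sum_{i=1}^n Y_iY_i^T\|_{2\to 2}=n\,\|\CovM_n\|_{2\to 2}$, I would apply Theorem~\ref{t: main} to the square sample-covariance problem with $N=n$. In this regime the middle term $B^{2/p}(n/N)^{(p-2)/p}\log^4(N/n)$ vanishes identically and the last term collapses to the constant $B^{2/p}$, yielding
$$\|A\|_{2\to 2}^2 \;\leq\; C_1(p,B)\,n \;+\; C_2(p,B)\max_{i\leq n}\|Y_i\|^2$$
with probability at least $1-\tfrac{1}{n}$, for constants $C_1,C_2$ depending only on $p$ and $B$.

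The second ingredient is that $M_n^2:=\max_{i\leq n}\|Y_i\|^2$ is of order $n$ with high probability, so as to absorb the additive $C_1 n$ term. The power-mean inequality applied to $t_j=(Y_1^{(j)})^2$, combined with the hypothesis $\Exp|\langle Y_1,e_j\rangle|^p\leq B$, gives
$$\Exp\|Y_1\|^p \;\leq\; n^{p/2-1}\sum_{j=1}^n \Exp|Y_1^{(j)}|^p \;\leq\; Bn^{p/2}.$$
Since $\Exp\|Y_1\|^2=n$ by isotropy, the Paley--Zygmund inequality (H\"older with conjugate exponents $p/2$ and $p/(p-2)$ applied to $\Exp\|Y_1\|^2\mathbf{1}_{\|Y_1\|^2\geq n/2}$) yields
$$\Prob(\|Y_1\|^2\geq n/2) \;\geq\; (1/2)^{p/(p-2)}B^{-2/(p-2)} \;=:\; c_0(p,B) \;>\; 0.$$
By independence of the $Y_i$, $\Prob(M_n^2<n/2)\leq(1-c_0)^n\leq\tfrac{1}{n}$ once $n\geq K_1(p,B)$.

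On the intersection of the two favorable events (failure probability $\leq\tfrac{2}{n}$, well inside the claimed $\tfrac{4}{n}$) we conclude
$$\|A\|_{2\to 2}^2 \;\leq\; C_1 n + C_2 M_n^2 \;\leq\; (2C_1+C_2)\,M_n^2,$$
so one may take $K_2=\sqrt{2C_1+C_2}$. The main obstacle is justifying the first displayed bound in the square case $N=n$, since Theorem~\ref{t: main} is stated only for $N\geq 2n$. The naive workaround of padding with $n$ auxiliary i.i.d.\ copies $Y_{n+1},\dots,Y_{2n}$ and using the PSD monotonicity $AA^T\preceq\sum_{i=1}^{2n}Y_iY_i^T$ replaces $M_n^2$ on the right by $\max_{i\leq 2n}\|Y_i\|^2$; in heavy-tailed regimes (e.g.\ distributions with a small-probability outlier component) the ratio $\max_{i\leq 2n}\|Y_i\|^2/M_n^2$ can fail to be $O(1)$ on events of probability $\omega(1/n)$, so the padding strategy loses a polynomial factor and will not reach the claimed $1-\tfrac{4}{n}$ conclusion. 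The cleanest resolution is to inspect the proof of Theorem~\ref{t: main} and verify that it extends without change to $N=n$, where the logarithmic factor that motivates the hypothesis $N\geq 2n$ is trivially absent.
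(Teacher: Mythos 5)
Your route is the intended one: bound $\|A\|_{2\to 2}^2=\bigl\|\sum_{i\le n}Y_iY_i^T\bigr\|_{2\to 2}$ by the covariance-type estimate and absorb the additive $B^{2/p}n$ term using $\max_{i\le n}\|Y_i\|^2\gtrsim n$, which you obtain correctly from $\Exp\|Y_1\|^2=n$, $\Exp\|Y_1\|^p\le Bn^{p/2}$ and Paley--Zygmund; your rejection of the padding workaround is also sound (it only controls $\|A\|$ by $\max_{i\le 2n}\|Y_i\|$, and for heavy-tailed $\|Y_i\|$ the two block maxima can differ by an arbitrarily large factor with probability bounded away from zero, not $O(1/n)$). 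The one step you leave as a pointer --- validity of the Gram-matrix bound at $N=n$ --- does go through, exactly as you suspect: in the proof of Theorem~\ref{t: main}, the regime $\log\frac{N}{n}<C_{\text{\tiny\ref{p: prob w}}}\max\bigl(1,1/(p-2)\bigr)$ is treated by partitioning the $n$ columns into blocks of size at most $n_0=\bigl\lfloor N/\exp\bigl(C_{\text{\tiny\ref{p: prob w}}}\max(1,1/(p-2))\bigr)\bigr\rfloor$ and applying Propositions~\ref{p: f est f} and~\ref{p: deviation} to each $N\times|J_m|$ submatrix; that part of the argument nowhere uses $N\ge 2n$ and yields, for every $N\ge n$ with $n\ge K_1(p,B)$, the bound $\|{A_N}^TA_N\|_{2\to 2}\le\widetilde\nu(p)\max_{i\le N}\|X_i\|^2+\widetilde\nu(p)B^{2/p}N$ with probability at least $1-\frac{1}{n}$ (the per-block failure probabilities $\frac{2}{n_0^2}+2^{2-n_0}$, summed over $\lceil n/n_0\rceil$ blocks, are at most $\frac1n$ once $n$ exceeds a constant depending on $p$). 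Taking $N=n$ there and intersecting with your event $\{\max_{i\le n}\|Y_i\|^2\ge n/2\}$ gives $\|A\|_{2\to 2}^2\le (2C_1+C_2)\max_{i\le n}\|Y_i\|^2$ with failure probability at most $\frac2n\le\frac4n$, so once you cite (or reproduce) that second case of the proof of Theorem~\ref{t: main} instead of the statement of the theorem itself, your argument is complete and coincides with the derivation the paper intends.
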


The core of the proof of Theorem~\ref{t: main} is a ``chaining'' argument for quadratic forms already employed
in \cite{MP14,GLPT15}. At the same time, two crucial new ingredients are added:
First, we define a ``coloring'' of the sample, which is essentially
a truncation procedure for the inner products of the sample vectors.
Second is a Sparsifying Lemma, which allows to significantly decrease
cardinalities of $\varepsilon$-nets constructed in the proof,
thereby providing better probabilistic estimates for quadratic forms.
The Sparsifying Lemma allowed us to get the Bai--Yin type estimate for $p>4$, and
together with the coloring technique, to extend the range of admissible $p$'s to $(2,\infty)$.

The structure of the paper is the following: In Section~\ref{s: prelim},
we collect the notation and several auxiliary lemmas. In Section~\ref{s: coloring},
we define the coloring of the sample. In Sections~\ref{s: quadr det} and~\ref{s: quadr prob},
we define and estimate certain quadratic forms. In particular, the Sparsifying Lemma
(Lemma~\ref{l: s-r}) is given in Section~\ref{s: quadr det}.
Finally, in Section~\ref{s: proof}, we complete the proof of the main result.

\section{Preliminaries}\label{s: prelim}

The set of natural numbers will be denoted by $\N$, and reals --- by $\R$.
Given a natural number $k$, $[k]$ is the set $\{1,2,\dots,k\}$.
Cardinality of a finite set $S$ will be denoted by $|S|$.
For a real number $a$, $\lfloor a\rfloor$ is the largest integer not exceeding $a$, whereas
$\lceil a\rceil$ is the smallest integer greater or equal to $a$.
Let $\Sph^{N-1}$ be the standard unit sphere in $\R^N$ and
$\{e_i\}_{i=1}^N$ be the standard basis vectors in $\R^N$.
For brevity, for any subset $I\subset[N]$, by $\R^I$ we denote the span of the vectors $\{e_i\}_{i\in I}$.
Given a vector $y\in\R^N$, by $|y|\in\R_+^N$ we denote the vector of the absolute values of coordinates of $y$.

The standard inner product in $\R^N$ will be denoted by $\langle\cdot,\cdot\rangle$,
and the canonical Euclidean norm --- by $\|\cdot\|$.
For a vector $v\in\R^N$, $\|v\|_p$ ($1\leq p\leq\infty$) is the standard $\ell_p^N$ norm.
For a matrix $M$, its spectral norm is denoted by $\|M\|_{2\to 2}$.

Given a real non-negative sequence $(a_i)_{i=1}^N$, a subset $J\subset [N]$ and $k\in\N$, 
denote by $\kmax{k}{J}a_\ell$ the $k$-th largest element of the subsequence $(a_i)_{i\in J}$.
When $k>|J|$, we set $\kmax{k}{J}a_\ell:=0$.

Given a graph $G=(V,E)$, a vertex coloring of $G$ is an assignment of ``colors'' to all vertices such that
no adjacent vertices share the same color. The smallest possible number of colors sufficient
to assign a vertex coloring for $G$ is called {\it the chromatic number of $G$} and is denoted by $\chi(G)$.

For any $\rho>0$ and a subset $S\subset\R^N$, a {\it Euclidean $\rho$-net} $\Net$ in $S$
is any subset of $S$ such that for every $x\in S$ there is $y\in\Net$ with $\|x-y\|\leq \rho$.
If, additionally, one can always find $y\in\Net$ with $\supp y\subset \supp x$ and $\|x-y\|\leq\rho$
then we will call $\Net$ {\it a support-preserving} $\rho$-net.

A vector $y\in\R^N$ is {\it $r$-sparse} (for some $r\geq 0$) if $|\supp y|\leq r$.
The following lemma can be proved by standard arguments:
\begin{lemma}\label{l: s-p net}
For every $\rho\in(0,1]$ and any natural $r\leq N$, there exists a support-preserving
$\rho$-net $\Net$ in the set of all $r$-sparse unit vectors in $\R^N$
of cardinality at most $\bigl(\frac{C_{\text{\tiny\ref{l: s-p net}}}N}{\rho r}\bigr)^r$.
Here, $C_{\text{\tiny\ref{l: s-p net}}}>0$ is a universal constant.
\end{lemma}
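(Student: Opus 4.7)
The plan is to construct the net as a union, over all possible supports $I\subset[N]$ with $|I|\le r$, of a standard Euclidean $\rho$-net in the unit sphere of the coordinate subspace $\R^I$. Each piece will have controlled cardinality by the usual volumetric argument, and the union bound together with the estimate $\binom{N}{k}\le(eN/k)^k$ will give the desired overall count.

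More precisely, first I would recall the classical fact that for any $d$-dimensional Euclidean space and any $\rho\in(0,1]$, the unit sphere admits a $\rho$-net of cardinality at most $(3/\rho)^d$. Applying this inside each coordinate subspace $\R^I$ (with $|I|\le r$), I obtain $\rho$-nets $\Net_I\subset\Sph^{N-1}\cap\R^I$ with $|\Net_I|\le(3/\rho)^{|I|}$. I then set $\Net:=\bigcup_{I\subset[N],\,|I|\le r}\Net_I$. For the support-preserving property, take any $r$-sparse unit vector $x\in\R^N$, set $I_x:=\supp x$ (so $|I_x|\le r$), and apply the defining property of $\Net_{I_x}$: there exists $y\in\Net_{I_x}$ with $\|x-y\|\le\rho$, and automatically $\supp y\subset I_x=\supp x$.

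It remains to bound $|\Net|$. Using $\binom{N}{k}\le(eN/k)^k$, I get
$$|\Net|\le\sum_{k=0}^{r}\binom{N}{k}\Bigl(\frac{3}{\rho}\Bigr)^{k}\le\sum_{k=0}^{r}\Bigl(\frac{3eN}{\rho k}\Bigr)^{k}.$$
Writing $A:=3eN/\rho$, the map $k\mapsto(A/k)^k$ is increasing on $[0,A/e]=[0,3N/\rho]$, and since $r\le N\le 3N/\rho$ (for $\rho\le 1$), each term in the sum is dominated by the $k=r$ term. Thus $|\Net|\le(r+1)(3eN/(\rho r))^{r}$. Since $(r+1)^{1/r}\le 2$ for $r\ge 1$, the factor $r+1$ can be absorbed into the base, yielding the bound $(C_{\text{\tiny\ref{l: s-p net}}}N/(\rho r))^{r}$ with $C_{\text{\tiny\ref{l: s-p net}}}=6e$.

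I do not expect a real obstacle here; the construction is entirely standard. The only place requiring a moment's care is checking that the geometric-type sum over $k$ is dominated by its last term, for which the monotonicity of $(A/k)^{k}$ on the relevant range suffices. Edge cases ($r=0$ or $\rho=1$) are trivial and can be absorbed in the constant.
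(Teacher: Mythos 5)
Your proof is correct and is exactly the standard argument the paper alludes to (the paper states Lemma~\ref{l: s-p net} without proof, remarking only that it follows by standard arguments): take a union of volumetric $\rho$-nets in the unit spheres of all coordinate subspaces $\R^I$ with $|I|\le r$, note that choosing $I=\supp x$ gives the support-preserving property, and bound the cardinality via $\binom{N}{k}\le(eN/k)^k$ together with the monotonicity of $k\mapsto (A/k)^k$ and absorption of the factor $r+1$ into the constant. No gaps; nothing further is needed.
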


The next lemma, stated in \cite{GLPT15} (the argument appeared already in \cite{ALPT11}), will be very helpful for us.
\begin{lemma}[{\cite[Lemma~4.1]{GLPT15}}]\label{l: quadratic net}
Let $M$ be an $n\times n$ matrix, $\rho\in(0,1/2)$, and let $\Net$ be a Euclidean $\rho$-net
in $\Sph^{n-1}$. Then
$$\sup\limits_{y\in \Sph^{n-1}}|\langle My,y\rangle|\leq (1-2\rho)^{-1}\sup\limits_{z\in\Net}|\langle Mz,z\rangle|.$$
\end{lemma}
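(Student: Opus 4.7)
The plan is a standard $\varepsilon$-net polarization argument. First, I observe that only the symmetric part of $M$ contributes to the quadratic form: writing $M_s:=(M+M^T)/2$ one has $\langle My,y\rangle=\langle M_s y,y\rangle$ for all $y\in\R^n$, so both sides of the desired inequality are unaffected by replacing $M$ with $M_s$. I may therefore assume throughout that $M$ is symmetric. Set $S:=\sup_{y\in\Sph^{n-1}}|\langle My,y\rangle|$; the goal is to show $S\leq (1-2\rho)^{-1}\sup_{z\in\Net}|\langle Mz,z\rangle|$.

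Fix an arbitrary $y\in\Sph^{n-1}$ and, using the net property, pick $z\in\Net$ with $\|y-z\|\leq\rho$. Since $M$ is symmetric, the cross terms in the expansion cancel and one gets the polarization-type identity
\[
\langle My,y\rangle-\langle Mz,z\rangle=\langle M(y-z),\,y+z\rangle.
\]
The task reduces to controlling the right-hand side by $2\rho S$. For this I invoke the standard fact that for a real symmetric matrix the numerical radius equals the operator norm, i.e.\ $\|M\|_{2\to 2}=S$. (This is immediate from the spectral theorem; alternatively, the polarization identity $4\langle Ma,b\rangle=\langle M(a+b),a+b\rangle-\langle M(a-b),a-b\rangle$ combined with the parallelogram law yields $|\langle Ma,b\rangle|\leq \frac{S}{2}(\|a\|^2+\|b\|^2)$, and by homogeneity $|\langle Ma,b\rangle|\leq S\|a\|\|b\|$.) Applying this bilinear bound with $a=y-z$ and $b=y+z$ gives
\[
|\langle M(y-z),y+z\rangle|\leq S\,\|y-z\|\,\|y+z\|\leq S\cdot\rho\cdot 2 = 2\rho S.
\]

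Combining the two displays yields $|\langle My,y\rangle|\leq\sup_{z'\in\Net}|\langle Mz',z'\rangle|+2\rho S$. Since this holds for every $y\in\Sph^{n-1}$, taking the supremum on the left gives $S\leq\sup_{z\in\Net}|\langle Mz,z\rangle|+2\rho S$, and because $\rho<1/2$ we may divide by $1-2\rho>0$ to conclude. There is no serious obstacle; the one subtle point is the symmetrization step, which is what forces the cross terms in the polarization identity to cancel — without it one would be left with an antisymmetric remainder $\langle (M-M^T)(y-z)/2,\,y+z\rangle$ that is \emph{not} controlled by the quadratic-form supremum $S$, so the reduction to $M_s$ is genuinely used.
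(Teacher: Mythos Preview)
Your proof is correct. The paper itself does not give a proof of this lemma --- it is quoted from \cite[Lemma~4.1]{GLPT15} without argument (with a remark that the idea already appeared in \cite{ALPT11}) --- but your symmetrization-plus-polarization argument is exactly the standard one used in those references.
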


Next, we recall two well known inequalities regarding the distribution of sums of independent random variables.
\begin{lemma}[W.Hoeffding, \cite{Hoeffding}]\label{l: hoeff}
Let $\xi_1,\xi_2,\dots,\xi_m$ be independent random variables, such that $\xi_i\in[a_i,b_i]$ a.s.\ for some
numbers $a_i,b_i\in\R$ ($i=1,2,\dots,m$). Then
$$\Prob\Bigl\{\sum\limits_{i=1}^m\xi_i-\sum\limits_{i=1}^m\Exp\xi_i\geq mt\Bigr\}
\leq\exp\Bigl(-2m^2t^2/\sum_{i=1}^m(b_i-a_i)^2\Bigr),\;\;t>0.$$
\end{lemma}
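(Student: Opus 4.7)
The plan is the classical Chernoff/exponential-moment approach. First, for any parameter $s>0$, apply Markov's inequality to the exponentiated centered sum, then use independence to factor the moment generating function:
$$\Prob\Bigl\{\sum_{i=1}^m(\xi_i-\Exp\xi_i)\geq mt\Bigr\}\leq e^{-smt}\prod_{i=1}^m\Exp\exp\bigl(s(\xi_i-\Exp\xi_i)\bigr).$$
After bounding each factor of the product, I would optimize the resulting expression over $s>0$.

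The heart of the argument — and the only step requiring real work — is the auxiliary mgf bound (Hoeffding's lemma): if $Y$ is a mean-zero random variable with $Y\in[\alpha,\beta]$ almost surely, then
$$\Exp e^{sY}\leq \exp\Bigl(\frac{s^2(\beta-\alpha)^2}{8}\Bigr),\quad s\in\R.$$
To establish this, I would use convexity of $x\mapsto e^{sx}$ on $[\alpha,\beta]$ to pointwise dominate $e^{sY}$ by the chord $\tfrac{\beta-Y}{\beta-\alpha}e^{s\alpha}+\tfrac{Y-\alpha}{\beta-\alpha}e^{s\beta}$, take expectation (so that $\Exp Y=0$ eliminates the linear term in $Y$), and introduce $u:=s(\beta-\alpha)$ together with $p:=-\alpha/(\beta-\alpha)\in[0,1]$ to rewrite the resulting upper bound as $e^{\varphi(u)}$ with $\varphi(u):=-pu+\log\bigl(1-p+pe^{u}\bigr)$. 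A direct computation then gives $\varphi(0)=0$, $\varphi'(0)=0$, and $\varphi''(u)=q(u)\bigl(1-q(u)\bigr)\leq 1/4$ uniformly in $u$, where $q(u):=pe^{u}/(1-p+pe^{u})\in[0,1]$. Taylor's formula to second order yields $\varphi(u)\leq u^2/8$, which is precisely the claimed bound.

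Applying this to each $Y_i:=\xi_i-\Exp\xi_i$ (whose support lies in an interval of length $b_i-a_i$) and plugging into the factored Markov bound gives
$$\Prob\Bigl\{\sum_{i=1}^m(\xi_i-\Exp\xi_i)\geq mt\Bigr\}\leq \exp\Bigl(-smt+\frac{s^2}{8}\sum_{i=1}^m(b_i-a_i)^2\Bigr).$$
The exponent on the right is quadratic in $s$; minimizing over $s>0$ yields the optimal choice $s^*=4mt\big/\sum_{i=1}^m(b_i-a_i)^2$, and substituting back produces exactly $\exp\bigl(-2m^2t^2/\sum_{i=1}^m(b_i-a_i)^2\bigr)$, as claimed.

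The main (and essentially only) obstacle is the uniform bound $\varphi''\leq 1/4$ on the cumulant-like function above; once that is in hand, the rest of the proof is a mechanical combination of Markov's inequality, independence, and a one-variable optimization. It is worth emphasizing that the argument is symmetric in sign, so the same bound applies to deviations of $-\sum_i(\xi_i-\Exp\xi_i)$, giving the familiar two-sided version with an extra factor of $2$ if desired; the one-sided statement in the lemma is recovered directly from the optimization above.
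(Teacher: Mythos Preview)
Your proof is correct and is in fact the standard argument due to Hoeffding. Note, however, that the paper does not supply its own proof of this lemma: it is stated as a known result with a citation to \cite{Hoeffding}, so there is no in-paper proof to compare against.
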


Given a random variable $\xi$, its {\it L\'evy concentration function} $\concf(\xi,\cdot)$
is defined as
$$\concf(\xi,t)=\sup\limits_{\lambda\in\R}\Prob\bigl\{|\xi-\lambda|\leq t\bigr\},\;\;t\geq 0.$$  

\begin{lemma}[H.Kesten, \cite{Kesten}]\label{l: kesten}
Let $\xi_1,\xi_2,\dots,\xi_m$ be independent random variables, and let $0<a_1,a_2,\dots,a_m\leq 2R$
be some real numbers. Then
$$\concf\Bigl(\sum\limits_{j=1}^m\xi_j,R\Bigr)\leq C_{\text{\tiny\ref{l: kesten}}}R
\frac{\sum\nolimits_{j=1}^m {a_j}^2\bigl(1-\concf(\xi_j,a_j)\bigr)\concf(\xi_j,R)}
{\bigl(\sum\nolimits_{j=1}^m {a_j}^2(1-\concf(\xi_j,a_j))\bigr)^{3/2}}.$$
Here, $C_{\text{\tiny\ref{l: kesten}}}>0$ is a universal constant.
\end{lemma}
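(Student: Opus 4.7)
Since this is the classical Kesten (1969) concentration-function inequality, the plan is to reproduce its standard Fourier-analytic proof. The argument decomposes into three stages: an Esseen smoothing reduction, a pointwise characteristic-function bound via symmetrization, and a refined integration against a kernel tuned to the scale $R$.

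Stage 1 applies Esseen's smoothing inequality $\concf(\xi, R) \leq C R \int_{-1/R}^{1/R} |\varphi_\xi(t)| \, dt$ to $\xi = S_m := \sum_j \xi_j$; by independence the integrand factors as $\prod_j |\varphi_{\xi_j}(t)|$. Stage 2 produces a Gaussian-type pointwise bound on each factor. Symmetrizing each $\xi_j$ with an independent copy $\xi_j'$ gives $|\varphi_{\xi_j}(t)|^2 = \Exp \cos(t(\xi_j - \xi_j'))$; the elementary inequality $1-\cos u \geq c u^2$ on $|u| \leq \pi$ (applicable on the whole integration domain, since $a_j \leq 2R$ and $|t| \leq 1/R$ give $|ta_j| \leq 2$) combined with the tail bound $\Prob(|\xi_j - \xi_j'| \geq a_j) \geq 1 - \concf(\xi_j, a_j)$ yields
$$|\varphi_{\xi_j}(t)| \leq \exp\bigl(-c\, t^2 w_j\bigr), \qquad w_j := a_j^2(1 - \concf(\xi_j, a_j)).$$
Taking the product and integrating the pure Gaussian bound gives the Esseen--Rogozin estimate $\concf(S_m, R) \leq CR /\sqrt{\sum_j w_j}$, which is strictly weaker than claimed.

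The third stage, and the main technical difficulty, is to upgrade this to the asymmetric weighted form, extracting the factor $\sum_j w_j \concf(\xi_j, R)$ in the numerator. The mechanism is that when $\concf(\xi_j, R)$ is small, $\xi_j$ is itself spread at scale $R$, and $|\varphi_{\xi_j}(t)|$ decays faster near $|t| \sim 1/R$ than the Gaussian bound suggests. Concretely, I would compare $\varphi_{\xi_j}$ with the characteristic function of a truncation of $\xi_j$ to an optimal window of length $R$, extract an additional factor controlled by $\concf(\xi_j, R)$, and then interpolate between this refinement and the Gaussian bound of stage 2. Integrating the resulting product against the Esseen kernel and optimizing over the index $j$ producing the largest weight $w_j \concf(\xi_j, R)$ yields the ratio $\sum_j w_j \concf(\xi_j, R) / (\sum_j w_j)^{3/2}$ with exactly the right exponents. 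The delicate point is to balance the small-scale Gaussian decay (from stage 2) against the large-scale concentration refinement so that the exponent $1$ in the numerator and $3/2$ in the denominator come out correctly; this balance is what distinguishes Kesten's sharp inequality from the cruder Esseen--Rogozin estimate, and is where the most careful bookkeeping of universal constants is required.
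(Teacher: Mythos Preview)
The paper does not prove this lemma at all: it is quoted as a classical result from the literature, with a citation to Kesten's 1969 paper, and is used only as a black box (inside the Sparsifying Lemma). So there is nothing to compare your argument against --- the paper's ``proof'' is just the reference.

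As to the content of your sketch: Stages~1 and~2 are the standard Esseen--Rogozin argument and are correct (and already yield the cruder bound $\concf(S_m,R)\le CR/\sqrt{\sum_j w_j}$, which, incidentally, would suffice for the only place the paper invokes the lemma). Stage~3, however, is not a proof but a description of what the desired improvement should look like. Phrases such as ``compare $\varphi_{\xi_j}$ with the characteristic function of a truncation,'' ``extract an additional factor,'' and ``optimizing over the index $j$ producing the largest weight'' do not constitute an argument; in particular, Kesten's actual refinement does not proceed by singling out one index but by a more delicate decomposition of the Esseen integral that simultaneously exploits the small-$|t|$ Gaussian decay from all summands and the uniform bound $|\varphi_{\xi_j}(t)|\le \concf(\xi_j,R)+C R|t|$ valid near $|t|\sim 1/R$. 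If you intend to supply a genuine proof, that step needs to be written out in full; as it stands, Stage~3 is a gap.
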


The next lemma provides an elementary estimate of order statistics for a set of independent non-negative
variables.
\begin{lemma}\label{l: os est}
Let $h\geq 1$, $B\geq 1$, $r\in\N$ and let $\xi_1,\xi_2,\dots,\xi_r$
be independent non-negative random variables
such that $\Exp{\xi_i}^{h}\leq B$, $i=1,2,\dots,r$.
Then for any $m\leq r$ and $\tau>0$ we have
$$\Prob\bigl\{\kmax{m}{[r]}\xi_\ell\geq \tau\bigr\}\leq \biggl(\frac{eBr}{\tau^h m}\biggr)^m.$$
\end{lemma}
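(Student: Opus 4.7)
The plan is to reduce the event $\{\kmax{m}{[r]}\xi_\ell\geq\tau\}$ to the statement that at least $m$ of the variables exceed the threshold $\tau$, and then bound this via a union bound combined with Markov's inequality.

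First I would note that $\kmax{m}{[r]}\xi_\ell\geq\tau$ precisely means that there exists a subset $S\subset[r]$ with $|S|=m$ such that $\xi_i\geq\tau$ for all $i\in S$. Hence
$$\Prob\bigl\{\kmax{m}{[r]}\xi_\ell\geq\tau\bigr\}
\leq\sum_{S\subset[r],\,|S|=m}\Prob\bigl\{\xi_i\geq\tau\text{ for all }i\in S\bigr\}.$$

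Next, by Markov's inequality applied to $\xi_i^h$ and the moment hypothesis $\Exp\xi_i^h\leq B$, each factor satisfies $\Prob\{\xi_i\geq\tau\}\leq B/\tau^h$. Using independence of the $\xi_i$'s, the joint probability over any fixed $S$ of size $m$ factors, giving a bound of $(B/\tau^h)^m$. Summing over the $\binom{r}{m}$ choices of $S$ and applying the standard estimate $\binom{r}{m}\leq(er/m)^m$ yields
$$\Prob\bigl\{\kmax{m}{[r]}\xi_\ell\geq\tau\bigr\}
\leq\binom{r}{m}\Bigl(\frac{B}{\tau^h}\Bigr)^m
\leq\Bigl(\frac{er}{m}\Bigr)^m\Bigl(\frac{B}{\tau^h}\Bigr)^m
=\Bigl(\frac{eBr}{\tau^h m}\Bigr)^m,$$
which is exactly the claimed bound.

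There is essentially no obstacle here; the argument is purely combinatorial and uses only independence together with a first-moment estimate. The only mild point to check is that the combinatorial factor $\binom{r}{m}\leq(er/m)^m$ is valid for all $1\leq m\leq r$, which is a standard consequence of Stirling's formula (or an elementary induction).
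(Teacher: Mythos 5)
Your proof is correct and follows exactly the same route as the paper: the paper's one-line argument is precisely the union bound over size-$m$ subsets combined with Markov's inequality, independence, and the estimate $\binom{r}{m}\leq(er/m)^m$, with the intermediate steps left implicit. You have simply spelled out the details.
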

\begin{proof}
We have
\begin{align*}
\Prob\bigl\{\kmax{m}{[r]}\xi_\ell\geq \tau\bigr\}
\leq {r\choose m}\biggl(\frac{B}{\tau^h}\biggr)^m
\leq \biggl(\frac{eBr}{\tau^h m}\biggr)^m.
\end{align*}
\end{proof}

A centered random vector $X$ in $\R^n$ is {\it isotropic} if its covariance matrix $\Exp XX^T$ is the identity.
Let us give a simple bound for the norm of an isotropic vector assuming certain moment
conditions on its one-dimensional projections:
\begin{lemma}\label{l: vector lp norm}
Let $X$ be a centered $n$-dimensional isotropic vector, an suppose that for some $p>2$ and $B\geq 1$
we have
$$\sup\limits_{y\in\Sph^{n-1}}\Exp|\langle X,y\rangle|^p\leq B.$$
Then for any $\tau>0$ we have
$$\Prob\bigl\{\|X\|\geq \tau\bigr\}\leq Bn^{p/2}\tau^{-p}.$$
\end{lemma}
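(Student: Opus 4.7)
The plan is to bound $\Exp\|X\|^p$ and then apply Markov's inequality. The bound $\Prob\{\|X\|\geq \tau\}\leq Bn^{p/2}\tau^{-p}$ is exactly what follows from $\Exp\|X\|^p\leq Bn^{p/2}$ together with Markov, so everything reduces to estimating the $p$-th moment of $\|X\|$.

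To estimate $\Exp\|X\|^p$, I would write $\|X\|^p=\bigl(\sum_{i=1}^n X_i^2\bigr)^{p/2}$ and exploit convexity. Since $p>2$, the function $t\mapsto t^{p/2}$ is convex on $\R_+$, so Jensen's inequality applied to the uniform measure on $\{1,\dots,n\}$ yields
$$\Bigl(\frac{1}{n}\sum_{i=1}^n X_i^2\Bigr)^{p/2}\leq \frac{1}{n}\sum_{i=1}^n |X_i|^p,$$
which, after multiplying by $n^{p/2}$, gives the deterministic estimate $\|X\|^p\leq n^{p/2-1}\sum_{i=1}^n |\langle X,e_i\rangle|^p$.

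Taking expectations and using the hypothesis with $y=e_i\in\Sph^{n-1}$ for each $i$, I get
$$\Exp\|X\|^p\leq n^{p/2-1}\sum_{i=1}^n\Exp|\langle X,e_i\rangle|^p\leq n^{p/2-1}\cdot n\cdot B=Bn^{p/2}.$$
Then Markov's inequality applied to the non-negative random variable $\|X\|^p$ gives
$$\Prob\{\|X\|\geq \tau\}=\Prob\{\|X\|^p\geq \tau^p\}\leq \tau^{-p}\,\Exp\|X\|^p\leq Bn^{p/2}\tau^{-p}.$$

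There is no real obstacle here: the isotropy assumption is not even used (only the moment bound on one-dimensional projections along coordinate directions), and the proof is essentially two lines --- Jensen (or power-mean) followed by Markov. The only subtlety to double-check is that the convexity direction requires $p/2\geq 1$, which is guaranteed by the assumption $p>2$.
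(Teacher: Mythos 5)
Your proof is correct and essentially identical to the paper's: the paper uses the deterministic norm comparison $\|X\|\leq n^{1/2-1/p}\|X\|_p$ (which is exactly your Jensen/power-mean step after raising to the $p$-th power), bounds $\Exp\|X\|_p^p\leq Bn$ via the coordinate directions, and finishes with Markov. No differences worth noting.
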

\begin{proof}
Note that $\|X\|\leq n^{1/2-1/p}\|X\|_p$ (deterministically), whence
$$\Exp\|X\|^{p}\leq n^{p/2-1}\Exp\|X\|_p^p\leq Bn^{p/2}.$$
Then, by Markov's inequality,
$$\Prob\bigl\{\|X\|\geq \tau\bigr\}\leq Bn^{p/2}\tau^{-p}.$$
\end{proof}

\section{Coloring the sample}\label{s: coloring}

Let $X_1,X_2,\dots,X_N$ be the i.i.d.\ copies of a centered $n$-dimensional isotropic vector $X$.
Further, fix a number $H>0$. We construct a random undirected graph $\Graph_H$ with the vertex set $[N]$
by defining its edge set as
$$\bigl\{(i,j):\,1\leq i< j\leq N,\,|\langle X_i,X_j\rangle|> H\max\limits_{h\leq N}\|X_h\|\bigr\}.$$

Let $\chi(\Graph_H)$ be the chromatic number of the graph. In what follows, for each $H>0$
we define a random partition $\{\ColorC^H_m\}_{m=1}^N$ of $[N]$, measurable with respect to the $\sigma$-algebra
generated by $X_1,X_2,\dots,X_N$, and satisfying the following two conditions:
\begin{enumerate}
\item[1)] $\ColorC^H_m=\emptyset$ for all $m>\chi(\Graph_H)$;
\item[2)] For any $m\leq N$ and $i,j\in \ColorC^H_m$ with $i\neq j$, the vertices $i$ and $j$ are not adjacent within $\Graph_H$, i.e.\
$|\langle X_i,X_j\rangle|\leq H\max\limits_{h\leq N}\|X_h\|$.
\end{enumerate}

The collection $\{\ColorC^H_m\}_{m=1}^N$ will be called {\it the coloring of the sample $X_1,X_2,\dots,X_N$ with
threshold $H$}.
Such a coloring will act as a way to ``truncate'' the inner products $\langle X_i,X_j\rangle$ and
will be employed when estimating quadratic forms in Sections~\ref{s: quadr det} and~\ref{s: quadr prob}.
At a more technical level, our estimate of the largest eigenvalue of the matrix
$\sum_{i=1}^N X_i{X_i}^T$ involves expressions
$\log (n)\,\max\limits_{i\neq j\in\ColorC}|\langle X_i,X_j\rangle|$
for some subsets $\ColorC\subset[N]$ (see Proposition~\ref{p: recurrent},
where they appear first time). A trivial upper bound
$\log (n)\,\max\limits_{i\neq j\in\ColorC}|\langle X_i,X_j\rangle|\leq \log(n)\max\limits_{i\leq N}\|X_i\|^2$
is not useful here; instead we build the argument in a way
that produces an upper bound of the form
$$\lambda_{\max}\Bigl(\sum_{i=1}^N X_i{X_i}^T\Bigr)
\lesssim\log (n)\,\sum\limits_{m=1}^N\max\limits_{i\neq j\in\ColorC^H_m}|\langle X_i,X_j\rangle|+\ldots
\leq\chi(\Graph_H)H\max\limits_{i\leq N}\|X_i\|+\ldots$$
Then a proper definition of $H$, together with a control of the random quantity $\chi(\Graph_H)$,
give a satisfactory estimate for $\lambda_{\max}$. In a sense, we partition the original sample $X_1,X_2,\dots,X_N$
into several subsets in such a way that within each subset the vectors are ``almost'' pairwise orthogonal.

The next statement provides tail bounds for the chromatic number $\chi(\Graph_H)$:

\begin{prop}\label{p: chrom}
Assume that for some $p>2$ and $B\geq 1$ we have
$\Exp|\langle X,y\rangle|^p\leq B$ for all $y\in\Sph^{n-1}$.
Then for any $H>0$ and any integer $m>1$ the chromatic number of $\Graph_H$ satisfies $\chi(\Graph_H)\leq m$ with probability
at least $1-\bigl(BNH^{-p}\bigr)^{m}n^{p/2}$.
\end{prop}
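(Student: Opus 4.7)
The plan is to reduce the chromatic-number bound to a maximum-degree bound via the greedy coloring inequality $\chi(\Graph_H)\leq \Delta(\Graph_H)+1$. Thus if $\chi(\Graph_H)>m$, some vertex $i\in[N]$ has degree at least $m$ in $\Graph_H$, i.e.\ there exist distinct $j_1,\dots,j_m\in[N]\setminus\{i\}$ with $|\langle X_i,X_{j_k}\rangle|>H\max\limits_{h\leq N}\|X_h\|$ for each $k$.

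The geometric observation driving the proof is that any edge forces both endpoints to have atypically large norms: by Cauchy--Schwarz, $|\langle X_i,X_j\rangle|\leq \|X_i\|\|X_j\|\leq \max\limits_{h\leq N}\|X_h\|\cdot\min(\|X_i\|,\|X_j\|)$, so the edge condition forces $\min(\|X_i\|,\|X_j\|)>H$, and in particular $\|X_i\|>H$. By Lemma~\ref{l: vector lp norm} the latter event has probability at most $Bn^{p/2}H^{-p}$, which will contribute the single $n^{p/2}$ factor appearing in the target.

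For the main estimate, fix $i$ and condition on $X_i$ restricted to $\{\|X_i\|>H\}$. Since $\max\limits_{h\leq N}\|X_h\|\geq\|X_i\|$, the edge event is contained in $\{|\langle a,X_j\rangle|>H\}$ for the unit vector $a:=X_i/\|X_i\|$, which has conditional probability at most $B/H^p$ by the moment assumption together with Markov's inequality. As $j$ ranges over $[N]\setminus\{i\}$ these events are conditionally independent given $X_i$, so a union bound over subsets of $m$ neighbors gives $\Prob\{\deg(i)\geq m\mid X_i\}\leq \binom{N-1}{m}(B/H^p)^m$ on the conditioning event. Combining with the norm tail and a final union bound over $i$,
$$\Prob\bigl\{\chi(\Graph_H)>m\bigr\}\leq N\cdot \frac{Bn^{p/2}}{H^p}\cdot\binom{N-1}{m}\Bigl(\frac{B}{H^p}\Bigr)^m\leq \frac{(BNH^{-p})^{m+1}\,n^{p/2}}{m!}.$$

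To match the target $(BNH^{-p})^m n^{p/2}$, observe that if the target is $\geq 1$ the claim is vacuous, and otherwise $BNH^{-p}<n^{-p/(2m)}\leq 1\leq m!$, so the extra factor $BNH^{-p}/m!$ is at most $1$. The main obstacle is the decoupling step: the raw edge events are not conditionally independent given $X_i$, because $\max\limits_{h\leq N}\|X_h\|$ depends on every $X_j$. Replacing the threshold $H\max\limits_{h\leq N}\|X_h\|$ by the smaller $H\|X_i\|$ enlarges each edge event while making them conditionally independent, which is exactly what allows the product-form tail bound to go through.
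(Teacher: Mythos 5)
Your proof is correct, but it follows a genuinely different route from the paper. You reduce the chromatic number to the maximum degree via greedy coloring ($\chi(\Graph_H)\leq \Delta(\Graph_H)+1$), observe that an edge forces $\|X_i\|>H$ at both endpoints (Cauchy--Schwarz), and then bound the degree tail of a fixed vertex $i$ by conditioning on $X_i$: replacing the threshold $H\max_h\|X_h\|$ by the smaller $H\|X_i\|$ enlarges the edge events but makes them conditionally independent, so a union bound over $m$-subsets gives $\binom{N-1}{m}(BH^{-p})^m$, and the single factor $n^{p/2}$ comes from the norm tail of the central vertex via Lemma~\ref{l: vector lp norm}. All steps check out, including the final absorption of the overshoot: your raw bound $(BNH^{-p})^{m+1}n^{p/2}/m!$ matches the stated bound only after noting that the claim is vacuous unless $(BNH^{-p})^m n^{p/2}<1$, in which case $BNH^{-p}<1\leq m!$. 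The paper instead runs a one-pass greedy coloring along the sample order, defining $Y(i)$ as the least color compatible with the already-colored vertices under the thresholds $H\|X_j\|$, and controls $\Exp|\{j:\,Y(j)=m+1\}|$ by a recursion in $m$, with the $n^{p/2}$ entering at the base case $Y(j)=2$ (which forces $\|X_j\|>H$); Markov's inequality on this expected count then gives exactly $(BNH^{-p})^m n^{p/2}$ with no case analysis. Your argument is more elementary and self-contained (max degree plus a binomial tail), at the cost of bounding the typically much larger quantity $\Delta(\Graph_H)+1$ and of the $\binom{N-1}{m}$ overshoot that must be argued away; the paper's recursion tracks the expected size of each color class directly, which is tighter in form and aligns with how the coloring $\{\ColorC^H_m\}$ is used later, but both yield the stated probability bound.
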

\begin{proof}
Let us introduce an auxiliary random process $Y(i)$ on $[N]$ with values in $\N$,
where $Y(1):=1$ (constant) and for all $i=2,3,\dots,N$:
$$
Y(i):=\min\bigl\{r\in\N:\, \forall j<i\,(j\in\N)\,\mbox{with $Y(j)=r$ we have $|\langle X_i,X_j\rangle|\leq H\|X_j\|$}\bigr\}.
$$
Note that by the very definition of $Y(i)$, we have that any two numbers $i\neq j\in[N]$ such that $Y(i)=Y(j)$,
are not adjacent in $\Graph_H$; in particular, $\chi(\Graph_H)\leq \max\limits_{i\in[N]}Y(i)$.
Next, for each $i>1$ and $m\geq 1$ we have
\begin{align*}
\Prob&\bigl\{Y(i)=m+1\bigr\}\\
&\leq\Prob\bigl\{\exists\,\ell\leq i-1\mbox{ such that $|\langle X_i,X_\ell\rangle|>H\|X_\ell\|$ and $Y(\ell)=m$}\bigr\}\\
&\leq\sum\limits_{\ell=1}^{i-1}\Prob\bigl\{|\langle X_i,X_\ell\rangle|>H\|X_\ell\|\mbox{ and }Y(\ell)=m\bigr\}\\
&\leq BH^{-p}\sum\limits_{\ell=1}^{i-1}\Prob\bigl\{Y(\ell)=m\bigr\}\\
&\leq BH^{-p}\Exp|\{j\leq N:\,Y(j)=m\}|.
\end{align*}
Hence,
$$
\Exp|\{j\leq N:\,Y(j)=m+1\}|
\leq BNH^{-p}\Exp|\{j\leq N:\,Y(j)=m\}|.
$$
Next, in view of Lemma~\ref{l: vector lp norm},
\begin{align*}
\Exp\bigl|\bigl\{j\leq N:\,Y(j)=2\bigr\}\bigr|
&\leq \Exp\bigl|\bigl\{j\leq N:\,\|X_j\|> H\bigr\}\bigr|\\
&=N\Prob\bigl\{\|X_1\|> H\bigr\}\\
&\leq BNH^{-p}n^{p/2}.
\end{align*}
Combining the estimates, we obtain for every $m\geq 1$:
$$\Exp|\{j\leq N:\,Y(j)=m+1\}|\leq \bigl(BNH^{-p}\bigr)^m n^{p/2}.$$
Note that the set of values $\{Y(j):\,j\leq N\}$ is an interval in $\N$, whence
\begin{align*}
\Prob\bigl\{\chi(\Graph_H)\geq m+1\bigr\}
&\leq\Prob\bigl\{\exists j\leq N\mbox{ with }Y(j)= m+1\bigr\}\\
&\leq\Exp|\{j\leq N:\,Y(j)=m+1\}|\\
&\leq \bigl(BNH^{-p}\bigr)^m n^{p/2}.
\end{align*}
\end{proof}

\section{Quadratic forms --- Deterministic estimates}\label{s: quadr det}

As before, let $X$ be a centered random vector in $\R^n$ and $X_1,X_2,\dots,X_N$ be its independent copies.
Additionally, we assume that the covariance matrix of $X$ is the identity.
By $A_N$ we denote the $N\times n$ random matrix with rows $X_1,X_2,\dots,X_N$.
For every natural $k\leq N$ and any subset $\ColorC\subset[N]$, denote
\begin{equation}\label{eq: f def}
f(k,\ColorC):=\sup\limits_{\substack{y\in\Sph^{N-1},\\
\supp y\subset \ColorC,\\ |\supp y|\leq k}}\Bigl\|\sum\limits_{i=1}^N y_i X_i\Bigr\|^2.
\end{equation}
Obviously, $f(N,[N])=\|A_N\|_{2\to 2}^2$.
Later, we will take $\ColorC$ to be one of classes from the coloring defined in the previous section,
in particular, $\ColorC$ will be a {\it random} set depending on $X_1,X_2,\dots,X_N$.
In this section, we will not estimate probabilities of any events, but instead produce
deterministic estimates for $f(k,\ColorC)$ as well as other quantities considered below.
The next relations provide a basis for our analysis. We have
\begin{align*}
f(k,\ColorC)&\leq\sup\limits_{\substack{y\in\Sph^{N-1},\\ \supp y\subset \ColorC,\\ |\supp y|\leq k}}\sum\limits_{i=1}^N y_i^2\|X_i\|^2
+
\sup\limits_{\substack{y\in\Sph^{N-1},\\ \supp y\subset \ColorC,\\ |\supp y|\leq k}}\sum\limits_{i\neq j}y_i y_j \langle X_i,X_j\rangle\\
&\leq\max\limits_{i\leq N}\|X_i\|^2+
\sup\limits_{\substack{y,z\in\Sph^{N-1},\\ \supp y,\supp z\subset \ColorC,\\ |\supp y|,
|\supp z|\leq k}}\sum\limits_{i\neq j}y_i z_j \langle X_i,X_j\rangle.
\end{align*}
Next, denoting $I^c:=[N]\setminus I$ for any $I\subset[N]$, we get:
\begin{align*}
\sup\limits_{\substack{y,z\in\Sph^{N-1},\\ \supp y,\supp z\subset \ColorC,\\ |\supp y|,
|\supp z|\leq k}}\sum\limits_{i\neq j}y_i z_j \langle X_i,X_j\rangle
&=
2^{-N+2}\sup\limits_{\substack{y\in\Sph^{N-1},\\ \supp y\subset \ColorC,\\ |\supp y|\leq k}}
\sup\limits_{\substack{z\in\Sph^{N-1},\\ \supp z\subset \ColorC,\\ |\supp z|\leq k}}
\sum\limits_{I\subset[N]}\bigl\langle \sum\limits_{i\in I} y_i X_i,\sum\limits_{j\in I^c} z_jX_j\bigr\rangle\\
&\leq
2^{-N+2}\sum\limits_{I\subset[N]}\sup\limits_{\substack{y\in\Sph^{N-1},\\ |\supp y|\leq k}}
\sup\limits_{\substack{z\in\Sph^{N-1},\\ |\supp z|\leq k}}
\bigl\langle \sum\limits_{i\in I\cap \ColorC} y_i X_i,\sum\limits_{j\in I^c\cap\ColorC} z_jX_j\bigr\rangle.
\end{align*}
For each $I\subset [N]$, denote
\begin{equation}\label{eq: g def}
g(k,\ColorC,I):=
\sup\limits_{\substack{y\in\Sph^{N-1},\\ |\supp y|\leq k}}
\sup\limits_{\substack{z\in\Sph^{N-1},\\ |\supp z|\leq k}}
\bigl\langle \sum\limits_{i\in I\cap\ColorC} y_i X_i,\sum\limits_{j\in I^c\cap\ColorC} z_jX_j\bigr\rangle.
\end{equation}
Further, for every vector $v\in\R^N$ and any $i\leq N$ we set
\begin{equation}\label{eq: w def}
W_{v,i}:=\langle X_i,\sum\limits_{j=1}^N v_j X_j\rangle.
\end{equation}
We recall that for any sequence $(a_\ell)_{\ell\in I^c\cap\ColorC}$
of non-negative real numbers,
by $\kmax{j}{I^c\cap\ColorC}a_\ell$
we denote the $j$-th largest element of the sequence.
Then we have for any integer $m\leq k$:
\begin{align*}
g(k,\ColorC,I)&=\sup\limits_{\substack{y\in\Sph^{N-1},\\ |\supp y|\leq k,\\ \supp y\subset I\cap\ColorC}}
\sup\limits_{\substack{z\in\Sph^{N-1},\\ |\supp z|\leq k,\\ \supp z\subset I^c\cap\ColorC}}\sum\limits_{j=1}^N
z_j W_{y,j}\\
&=\sup\limits_{\substack{y\in\Sph^{N-1},\\ |\supp y|\leq k,\\ \supp y\subset I\cap\ColorC}}
\Bigl(\sum\limits_{j=1}^k\kmax{j}{I^c\cap\ColorC}{W_{y,\ell}}^2\Bigr)^{1/2}\\
&\leq
\sup\limits_{\substack{y\in\Sph^{N-1},\\ |\supp y|\leq k,\\ \supp y\subset I\cap\ColorC}}
\Bigl(\sum\limits_{j=1}^m\kmax{j}{I^c\cap\ColorC}{W_{y,\ell}}^2\Bigr)^{1/2}
+\sup\limits_{\substack{y\in\Sph^{N-1},\\ |\supp y|\leq k,\\ \supp y\subset I\cap\ColorC}}
\Bigl(\sum\limits_{j=m+1}^k\kmax{j}{I^c\cap\ColorC}{W_{y,\ell}}^2\Bigr)^{1/2}\\
&\leq\sup\limits_{\substack{y\in\Sph^{N-1},\\ |\supp y|\leq k,\\ \supp y\subset I\cap\ColorC}}
\sup\limits_{\substack{z\in\Sph^{N-1},\\ |\supp z|\leq m,\\ \supp z\subset I^c\cap\ColorC}}
\bigl\langle \sum\limits_{i=1}^N y_i X_i,\sum\limits_{j=1}^N z_jX_j\bigr\rangle+
\sqrt{k}\sup\limits_{\substack{y\in\Sph^{N-1},\\ |\supp y|\leq k,\\ \supp y\subset I\cap\ColorC}}
\kmax{m}{I^c\cap\ColorC}{|W_{y,\ell}|}.
\end{align*}
Further,
\begin{align*}
&\sup\limits_{\substack{y\in\Sph^{N-1},\\ |\supp y|\leq k,\\ \supp y\subset I\cap\ColorC}}
\sup\limits_{\substack{z\in\Sph^{N-1},\\ |\supp z|\leq m,\\ \supp z\subset I^c\cap\ColorC}}
\bigl\langle \sum\limits_{i=1}^N y_i X_i,\sum\limits_{j=1}^N z_jX_j\bigr\rangle\\
&\hspace{1cm}=\sup\limits_{\substack{z\in\Sph^{N-1},\\ |\supp z|\leq m,\\ \supp z\subset I^c\cap\ColorC}}
\sup\limits_{\substack{y\in\Sph^{N-1},\\ |\supp y|\leq k,\\ \supp y\subset I\cap\ColorC}}
\sum\limits_{j=1}^N
y_j W_{z,j}\\
&\hspace{1cm}=\sup\limits_{\substack{z\in\Sph^{N-1},\\ |\supp z|\leq m,\\ \supp z\subset I^c\cap\ColorC}}
\Bigl(\sum\limits_{j=1}^k\kmax{j}{I\cap\ColorC}{W_{z,\ell}}^2\Bigr)^{1/2}\\
&\hspace{1cm}\leq\sup\limits_{\substack{z\in\Sph^{N-1},\\ |\supp z|\leq m,\\ \supp z\subset I^c\cap\ColorC}}
\sup\limits_{\substack{y\in\Sph^{N-1},\\ |\supp y|\leq m,\\ \supp y\subset I\cap\ColorC}}
\bigl\langle \sum\limits_{i=1}^N y_i X_i,\sum\limits_{j=1}^N z_jX_j\bigr\rangle+
\sqrt{k}\sup\limits_{\substack{z\in\Sph^{N-1},\\ |\supp z|\leq m,\\ \supp z\subset I^c\cap\ColorC}}
\kmax{m}{I\cap\ColorC}{|W_{z,\ell}|}.
\end{align*}
Thus, we can write for $m\leq k$:
\begin{align}
&g(k,\ColorC,I)\leq g(m,\ColorC,I)+\nonumber\\
&\hspace{1cm}\sqrt{k}\sup\limits_{\substack{y\in\Sph^{N-1},\\ |\supp y|\leq k,\\ \supp y\subset I\cap\ColorC}}
\kmax{m}{I^c\cap\ColorC}{|W_{y,\ell}|}+
\sqrt{k}\sup\limits_{\substack{z\in\Sph^{N-1},\\ |\supp z|\leq k,\\ \supp z\subset I^c\cap\ColorC}}
\kmax{m}{I\cap\ColorC}{|W_{z,\ell}|}.\label{eq: raw recurrent}
\end{align}
Let us remark that estimates for quadratic forms similar to the ones above, appeared
in literature before.
In particular, we refer to a work of J.~Bourgain \cite{B96}, which
deals with approximating covariance matrices
of log-concave distributions (see also \cite{ALPT10}), as well as
papers \cite{MP12,MP14} where a ``chaining'' argument
was employed for dealing with heavy-tailed distributions
(see \cite{GLPT15} for further development of the technique).

Unlike the above computations, the next lemma is a new addition to the arguments
employed in \cite{MP14,GLPT15}.
It provides a ``sparsifying'' technique
which will allow us to considerably decrease cardinalities of $\varepsilon$-nets
involved in the proof and, as a result,
weaken the moment assumptions on the distributions.
\begin{lemma}[Sparsifying Lemma]\label{l: s-r}
Let $\delta\in(0,1]$, $k\geq 12/\delta^2$, $m\geq 4$, and let $T=(t_{ij})$ be an $m\times k$ matrix of reals.
Then for any $y\in \Sph^{k-1}$ there is a coordinate projection $\Proj:\R^k\to\R^k$ of $\rank \Proj\leq \delta k$
such that
$${C_{\text{\tiny\ref{l: s-r}}}}^{-1}\delta^2\min\limits_{\ell\leq m}|Ty|_{\ell}
\leq \frac{\max_{i,j}|t_{ij}|}{\sqrt{k}}+\kmax{\lfloor m/4\rfloor}{[m]}|T\Proj (y)|_{\ell}.$$
Here, $C_{\text{\tiny\ref{l: s-r}}}>0$ is a universal constant, and
$\kmax{\lfloor m/4\rfloor}{[m]}|T\Proj (y)|_{\ell}$ is the $\lfloor m/4\rfloor$-th largest coordinate of the vector
$|T\Proj(y)|\in\R^m$.
\end{lemma}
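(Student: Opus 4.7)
The plan is to realize $\Proj$ as a random Bernoulli coordinate projection and to split the analysis into two regimes according to the size of $\mu := \min_{\ell \leq m}|Ty|_\ell$, writing $M := \max_{i,j}|t_{ij}|$ for brevity. If $\mu$ is so small that $C_{\text{\tiny\ref{l: s-r}}}^{-1}\delta^2\mu \leq M/\sqrt{k}$, the desired inequality is already satisfied by taking $\Proj = 0$, so I would henceforth assume the complementary bound $\mu > C_{\text{\tiny\ref{l: s-r}}}\,M/(\delta^2\sqrt{k})$. Combined with the hypothesis $k \geq 12/\delta^2$ this gives $\mu^2 > C_{\text{\tiny\ref{l: s-r}}}^2 M^2 /(12\delta^2)$, which is the quantitative input that will drive the Chebyshev estimates below.

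In this main regime I would draw i.i.d.\ Bernoulli variables $\xi_1,\dots,\xi_k$ with parameter $p := \delta/2$ and define $\Proj$ as the coordinate projection onto $\{j : \xi_j = 1\}$. Two events need to hold simultaneously. First, $|\supp\Proj|$ has mean $\delta k/2$ and variance at most $\delta k/2$, and a Chebyshev bound together with $k \geq 12/\delta^2$ yields $\Prob\{|\supp\Proj| > \delta k\} \leq 2/(\delta k) \leq 1/6$. Second, for each $\ell \leq m$ the coordinate $Z_\ell := (T\Proj y)_\ell = \sum_j t_{\ell j} y_j \xi_j$ has expectation $p(Ty)_\ell$ of magnitude at least $p\mu$, and variance at most $p\sum_j t_{\ell j}^2 y_j^2 \leq pM^2$; Chebyshev gives $\Prob\{|Z_\ell| < p\mu/2\} \leq 4M^2/(p\mu^2)$, which by the lower bound on $\mu$ is at most $96\delta/C_{\text{\tiny\ref{l: s-r}}}^2 \leq 1/8$ once $C_{\text{\tiny\ref{l: s-r}}}$ is fixed as a sufficiently large absolute constant. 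Hence the expected number of ``bad'' indices with $|Z_\ell| < p\mu/2$ is at most $m/8$, and a single Markov step bounds the probability that this count exceeds $3m/4$ by $1/6$.

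A union bound then produces a deterministic $\Proj$ with $|\supp \Proj| \leq \delta k$ and at least $\lfloor m/4 \rfloor$ indices $\ell$ for which $|Z_\ell| \geq p\mu/2 = \delta\mu/4$; in particular, $\kmax{\lfloor m/4\rfloor}{[m]} |T\Proj y|_\ell \geq \delta\mu/4$. Since $\delta \leq 1$ and $C_{\text{\tiny\ref{l: s-r}}} \geq 4$, one has $C_{\text{\tiny\ref{l: s-r}}}^{-1}\delta^2\mu \leq \delta\mu/4$, which closes the argument. The main obstacle is that Chebyshev can only compare $|Z_\ell|$ to its mean $p(Ty)_\ell$ when $\mu$ sits well above the standard-deviation scale $M\sqrt{p}\sim M\sqrt{\delta}$; this is exactly why the additive term $M/\sqrt{k}$ has to appear on the right-hand side to absorb the trivial regime, and why the hypothesis $k \geq 12/\delta^2$ is essentially sharp to make the Chebyshev bound on $|\supp \Proj|$ strong enough to combine with the per-coordinate estimate.
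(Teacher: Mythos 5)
There is a genuine gap, and it sits exactly at the point your own closing remark gestures at. Your ``main regime'' only guarantees $\mu > C_{\text{\tiny\ref{l: s-r}}}M/(\delta^2\sqrt{k})$, and the step ``combined with $k\geq 12/\delta^2$ this gives $\mu^2 > C_{\text{\tiny\ref{l: s-r}}}^2M^2/(12\delta^2)$'' reverses the inequality: $k\geq 12/\delta^2$ makes the threshold $M/(\delta^2\sqrt{k})$ \emph{smaller} (it is at most $M/(\delta\sqrt{12})$), so it cannot upgrade the lower bound on $\mu$. With the bound you actually have, Chebyshev gives only $\Prob\{|Z_\ell|<p\mu/2\}\leq 4M^2/(p\mu^2) < 8\delta^3 k/C_{\text{\tiny\ref{l: s-r}}}^2$, which is not $O(\delta)$ and is useless for large $k$. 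This is not a repairable arithmetic slip: in the range $C_{\text{\tiny\ref{l: s-r}}}M/(\delta^2\sqrt{k}) < \mu \ll M\sqrt{\delta}$, which your trivial case does not exclude, the standard deviation of $Z_\ell$ dominates its mean and no Chebyshev-around-the-mean argument can work. Concretely, take $y=e_1$ and $t_{\ell 1}=M$ for all $\ell\leq m$, with $k$ much larger than $C_{\text{\tiny\ref{l: s-r}}}^2/\delta^4$: this configuration lies in your main regime (here $\mu=M$), yet $Z_\ell=M\xi_1$ vanishes for \emph{all} rows simultaneously with probability $1-\delta/2\geq 1/2$, so $\kmax{\lfloor m/4\rfloor}{[m]}|T\Proj(y)|_{\ell}=0$ for the random projection with probability at least $1/2$, and no averaging or union bound can extract a good realization. (The lemma itself is fine in this example --- project onto $e_1$ --- which shows that the missing ingredient is a separate, deterministic treatment of the large coordinates of $y$.)

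This is precisely where the paper's proof diverges from yours. It first isolates $J=\{j\leq k:\,y_j\geq 2/\sqrt{\delta k}\}$ (so $|J|\leq\delta k/4$) and checks whether, for at least $m/2$ rows, the contribution of $J$ already captures half of $(Ty)_i$; if so, $\Proj$ is taken to be the deterministic projection onto $\R^J$. Only on the small coordinates $J^c$ does it randomize with Bernoulli($\delta/2$) selectors, and even there Chebyshev is not enough: when $\bigl(\sum_{j\in J^c}t_{ij}y_j\bigr)^2$ dominates the variance it uses Hoeffding's inequality, but when the variance dominates it invokes Kesten's bound on the L\'evy concentration function (Lemma~\ref{l: kesten}) --- an \emph{anti-concentration} estimate showing that $\bigl|\sum_{j\in J^c}\eta_j t_{ij}y_j\bigr|$ is unlikely to fall below the scale $\delta^{3/2}\bigl|\sum_{j\in J^c}t_{ij}y_j\bigr|$ (after first dispatching, via the $M/\sqrt{k}$ term, the case where a single entry $|t_{ij}y_j|$ is dominant, which is also what legitimizes applying Kesten). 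This is where the $\delta^2$ loss in the statement comes from. To make your route work you would need both ingredients --- a separate case for the heavy coordinates of $y$ and an anti-concentration (rather than concentration) bound for rows whose variance is large; as written, the proposal does not prove the lemma.
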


Arguments similar in spirit to Lemma~\ref{l: s-r}, and based on Maurey's empirical method,
have been recently employed
to verify RIP properties of the Fourier matrices (see, in particular,
\cite{RV08,B14,HR}).
Let us note that the dependence on $\delta$ of the left-hand side of the bound in Lemma~\ref{l: s-r}
can probably be improved,
decreasing the power of the logarithmic factor in the estimate from Theorem~\ref{t: main};
however, will not eliminate it completely.

\begin{proof}[Proof of Lemma~\ref{l: s-r}]
Fix a vector $y\in\Sph^{k-1}$. Without loss of generality, we can assume that all coordinates of $y$ are non-negative,
and that $\min_{i\leq m}\bigl|\sum_{j=1}^k t_{ij}y_j\bigr|>0$. Denote
$$J:=\bigl\{j\leq k:\,y_j\geq 2/\sqrt{\delta k}\bigr\}.$$
It is easy to see that $|J|\leq \delta k/4$.
Consider two cases:
\begin{enumerate}

\item[1)] The set
$$I:=\Bigl\{i\leq m:\,\Bigl|\sum\limits_{j=1}^k t_{ij}y_j\Bigr|> 2\Bigl|\sum\limits_{j\in J}t_{ij}y_j\Bigr|\Bigr\}$$
has cardinality less than $m/2$.
Then, taking $\Proj$ to be the orthogonal projection
onto the span of $\{e_j\}_{j\in J}$, we get
$$\Bigl|\sum\limits_{j=1}^k t_{ij}y_j\Bigr|\leq 2\Bigl|\sum\limits_{j\in J}t_{ij}y_j\Bigr|=2|T\Proj(y)|_{i}$$
for all $i\in I^c:=[m]\setminus I$ (with $|I^c|\geq m/2$), implying the statement.

\item[2)] The set $I$
has cardinality at least $m/2$.
For brevity, let us denote $J^c:=[k]\setminus J$.
First, assume that for some $i_0\in I$ and $j_0\in J^c$ we have
$|t_{i_0j_0}y_{j_0}|\geq \frac{\delta\sqrt{\delta}}{8eC_{\text{\tiny\ref{l: kesten}}}}
\bigl|\sum\nolimits_{j\in J^c}t_{i_0j}y_j\bigr|$.
Then, in view of the definition of $J$, we get
$$\max\limits_{i,j}|t_{ij}|\geq
|t_{i_0j_0}|\geq\frac{\delta^2 \sqrt{k}}{16eC_{\text{\tiny\ref{l: kesten}}}}\Bigl|\sum\limits_{j\in J^c}t_{i_0j}y_j\Bigr|
\geq\frac{\delta^2 \sqrt{k}}{32eC_{\text{\tiny\ref{l: kesten}}}}\Bigl|\sum\limits_{j=1}^k t_{i_0j}y_j\Bigr|
\geq \frac{\delta^2 \sqrt{k}}{32eC_{\text{\tiny\ref{l: kesten}}}}\min\limits_{i\leq m}\Bigl|\sum\limits_{j=1}^k t_{ij}y_j\Bigr|,$$
implying the statement.
For the rest of the proof, we will suppose that
\begin{equation}\label{eq: aux12}
|t_{iq}y_q|< \frac{\delta\sqrt{\delta}}{8eC_{\text{\tiny\ref{l: kesten}}}}\bigl|\sum\limits_{j\in J^c}t_{ij}y_j\bigr|\;\;
\mbox{for all $q\in J^c$ and $i\in I$}.
\end{equation}

Define a random coordinate projection $\widetilde\Proj:\R^k\to\R^k$ as follows:
Let $\{\eta_j\}_{j\in J^c}$ be i.i.d.\ Bernoulli ($0\text{-}1$) random variables with probability of success $\delta/2$,
and set $\Im\widetilde \Proj:=\spn\{\eta_j e_j\}_{j\in J^c}$.
Clearly, $\rank\widetilde\Proj=\sum_{j\in J^c}\eta_j$, and by Hoeffding's inequality (Lemma~\ref{l: hoeff})
we have $\Prob\{\rank\widetilde \Proj> \delta k\}\leq \exp(-\delta^2 k/4)\leq 0.1$.
We will show that for any index $i\in I$ we have
$$|T\widetilde\Proj(y)|_i=\Bigl|\sum\limits_{j\in J^c} \eta_j t_{ij}y_j\Bigr|\geq \frac{\delta}{4}\Bigl|\sum\limits_{j\in J^c}t_{ij}y_j\Bigr|
\geq \frac{\delta}{8}\Bigl|\sum\limits_{j=1}^k t_{ij}y_j\Bigr|$$
with probability at least $1-\exp(-1)$.
Fix any $i\in I$.
First, assume that
$$\bigl(\sum\limits_{j\in J^c}t_{ij}y_j\bigr)^2
\geq\frac{8}{\delta^2}\sum\limits_{j\in J^c}{t_{ij}}^2{y_j}^2.$$
Then by Hoeffding's inequality (Lemma~\ref{l: hoeff}), we have
\begin{align*}
\Prob\Bigl\{\Bigl|\sum_{j\in J^c}\eta_jt_{ij}y_j\Bigr|<\frac{\delta}{4}\Bigl|\sum\limits_{j\in J^c}t_{ij}y_j\Bigr|\Bigr\}
&\leq \exp\Bigl(-\frac{\delta^2}{8}\bigl(\sum\limits_{j\in J^c}t_{ij}y_j\bigr)^2/
\sum\limits_{j\in J^c}{t_{ij}}^2{y_j}^2\Bigr)\\
&\leq\exp(-1).
\end{align*}
Now, assume that $\bigl(\sum\nolimits_{j\in J^c}t_{ij}y_j\bigr)^2
<\frac{8}{\delta^2}\sum\nolimits_{j\in J^c}{t_{ij}}^2{y_j}^2$.
Then, applying Kesten's inequality (Lemma~\ref{l: kesten}) with
$R:=\frac{\delta\sqrt{\delta}}{8eC_{\text{\tiny\ref{l: kesten}}}}\bigl|\sum_{j\in J^c}t_{ij}y_j\bigr|$
and $a_j:=\frac{1}{2}|t_{ij}y_j|$ (note that $2R\geq a_j$ for all $j\in J^c$ in view of \eqref{eq: aux12}),
we obtain
\begin{align*}
\concf\Bigl(\sum_{j\in J^c}\eta_j t_{ij}y_j,R\Bigr)
&\leq C_{\text{\tiny\ref{l: kesten}}}R\Bigl(\sum\limits_{j\in J^c}{a_j}^2\bigl(1-\concf(\eta_j t_{ij} y_j,a_j)\bigr)\Bigr)^{-1/2}\\
&\leq \sqrt{\frac{8}{\delta}}C_{\text{\tiny\ref{l: kesten}}}R\Bigl(\sum\limits_{j\in J^c}{t_{ij}}^2{y_j}^2\Bigr)^{-1/2}\\
&\leq\frac{8C_{\text{\tiny\ref{l: kesten}}}R}{\delta\sqrt{\delta}}\Bigl|\sum\limits_{j\in J^c}t_{ij}y_j\Bigr|^{-1}\\
&\leq\exp(-1).
\end{align*}

Thus, for any $i\in I$ we have
$$\Prob\Bigl\{\Bigl|\sum_{j\in J^c}\eta_j t_{ij}y_j\Bigr|<
\frac{\delta\sqrt{\delta}}{8eC_{\text{\tiny\ref{l: kesten}}}}\Bigl|\sum_{j\in J^c}t_{ij}y_j\Bigr|\Bigr\}\leq\exp(-1),$$
whence, by the definition of $I$,
$$\Prob\Bigl\{\Bigl|\sum_{j\in J^c}\eta_j t_{ij}y_j\Bigr|\geq
\frac{\delta\sqrt{\delta}}{16eC_{\text{\tiny\ref{l: kesten}}}}\Bigl|\sum_{j=1}^k t_{ij}y_j\Bigr|\;\mbox{ and }\;
\rank\widetilde\Proj\leq\delta k\Bigr\}>\frac{1}{2},\;\;i\in I$$
(recall that $\Prob\{\rank\widetilde\Proj\leq\delta k\}\geq 0.9$).
This immediately implies that there is a (non-random) realization $\Proj$ of $\widetilde\Proj$ such that $\rank\Proj\leq \delta k$,
and
$$|T\Proj(y)|_i\geq
\frac{\delta\sqrt{\delta}}{16eC_{\text{\tiny\ref{l: kesten}}}}\Bigl|\sum_{j=1}^k t_{ij}y_j\Bigr|$$
for at least half of the indices $i\in I$, i.e.\ for at least $m/4$ indices. The result follows.
\end{enumerate}
\end{proof}

The next statement is an application of Lemma~\ref{l: s-r} to relation \eqref{eq: raw recurrent}:
\begin{lemma}\label{l: interm recurrent}
Let $\delta\in(0,1]$, $k\geq 12/\delta^2$ and $4\leq m\leq k$.
Further, let $I,\ColorC\subset[N]$ be subsets of $[N]$ (whether fixed or random). Then
\begin{align*}
&g(k,\ColorC,I)\leq g(m,\ColorC,I)+
2C_{\text{\tiny\ref{l: s-r}}}\delta^{-2}\max\limits_{i\neq j\in \ColorC}|\langle X_i,X_j\rangle|\\
&\hspace{0.5cm}+C_{\text{\tiny\ref{l: s-r}}}\sqrt{k}\delta^{-2}
\sup\limits_{\substack{y\in\Sph^{N-1},\\ |\supp y|\leq \delta k,\\ \supp y\subset I\cap\ColorC}}
\kmax{\lfloor m/4\rfloor}{I^c\cap\ColorC}{|W_{y,\ell}|}+
C_{\text{\tiny\ref{l: s-r}}}\sqrt{k}\delta^{-2}
\sup\limits_{\substack{z\in\Sph^{N-1},\\ |\supp z|\leq \delta k,\\ \supp z\subset I^c\cap\ColorC}}
\kmax{\lfloor m/4\rfloor}{I\cap\ColorC}{|W_{z,\ell}|},
\end{align*}
where $g$ and $W$ are defined in~\eqref{eq: g def} and~\eqref{eq: w def}.
\end{lemma}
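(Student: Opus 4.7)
The plan is to apply the Sparsifying Lemma (Lemma~\ref{l: s-r}) separately to each of the last two supremum terms in~\eqref{eq: raw recurrent}, thereby replacing the sparsity constraint $|\supp y|\leq k$ by $|\supp y|\leq \delta k$ at the cost of a multiplicative factor $O(\delta^{-2})$ and an additive term controlled by $\max_{i\neq j\in\ColorC}|\langle X_i,X_j\rangle|$. By symmetry it suffices to treat only the first of those two terms; the second follows identically upon swapping $I$ with $I^c$.

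First I would fix a unit vector $y$ with $|\supp y|\leq k$ and $\supp y\subset I\cap\ColorC$. Assuming $|I^c\cap\ColorC|\geq m$ (otherwise the term vanishes), let $L_y=\{\ell_1,\dots,\ell_m\}\subset I^c\cap\ColorC$ be the indices realizing the top $m$ values of $|W_{y,\ell}|$, and let $S=\{s_1,\dots,s_k\}$ be any superset of $\supp y$ of size $k$, chosen inside $\ColorC\setminus L_y$ (padding arbitrarily; degenerate cases where such padding is impossible are handled similarly in the appropriate smaller ambient dimension). Form the $m\times k$ matrix $T=(t_{ij})$ with entries $t_{ij}:=\langle X_{\ell_i},X_{s_j}\rangle$. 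Then $(Ty)_i=W_{y,\ell_i}$ (the padding columns contribute zero because $y_{s_j}=0$ off $\supp y$), so $\min_{i\leq m}|Ty|_i=\kmax{m}{I^c\cap\ColorC}|W_{y,\ell}|$. Moreover, since all the indices $\ell_i,s_j$ lie in $\ColorC$ and satisfy $\ell_i\neq s_j$ by construction, every entry of $T$ is an off-diagonal inner product within $\ColorC$, giving $\max_{i,j}|t_{ij}|\leq \max_{i\neq j\in\ColorC}|\langle X_i,X_j\rangle|$.

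Next I would invoke Lemma~\ref{l: s-r} (valid since $k\geq 12/\delta^2$ and $m\geq 4$) to produce a coordinate projection $\Proj_y:\R^k\to\R^k$ with $\rank\Proj_y\leq\delta k$ such that
\[
C_{\text{\tiny\ref{l: s-r}}}^{-1}\delta^{2}\,\kmax{m}{I^c\cap\ColorC}|W_{y,\ell}|
\;\leq\;\frac{\max_{i\neq j\in\ColorC}|\langle X_i,X_j\rangle|}{\sqrt{k}}
+\kmax{\lfloor m/4\rfloor}{[m]}|T\Proj_y(y)|_\ell.
\]
Because $(T\Proj_y(y))_i=W_{\Proj_y(y),\ell_i}$ and enlarging the index set from $L_y$ to $I^c\cap\ColorC$ can only increase the $\lfloor m/4\rfloor$-th order statistic, the last term is dominated by $\kmax{\lfloor m/4\rfloor}{I^c\cap\ColorC}|W_{\Proj_y(y),\ell}|$. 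Furthermore, $\Proj_y(y)$ has $|\supp\Proj_y(y)|\leq\delta k$, $\supp\Proj_y(y)\subset I\cap\ColorC$, and $\|\Proj_y(y)\|\leq 1$; normalizing and using the bilinearity of $W$ in its first argument allows one to replace $\Proj_y(y)$ by a genuine unit vector inside the sparse supremum appearing in the claim. Multiplying through by $\sqrt{k}\,\delta^{-2}\,C_{\text{\tiny\ref{l: s-r}}}$, taking the supremum over admissible $y$, adding the analogous estimate for the second term, and substituting into~\eqref{eq: raw recurrent} yields the lemma.

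I do not anticipate any conceptual obstacle, as the entire argument is a direct application of Lemma~\ref{l: s-r}. The only point requiring genuine care is the combinatorial bookkeeping that keeps $L_y\cap S=\emptyset$ and $L_y\cup S\subset\ColorC$, which in turn guarantees that each entry of $T$ is an inner product of two \emph{distinct} sample vectors from $\ColorC$; this is precisely what forces the additive correction in the claim to take the specific form $\max_{i\neq j\in\ColorC}|\langle X_i,X_j\rangle|$.
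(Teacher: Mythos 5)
Your proof is correct and takes essentially the same route as the paper: fix a $k$-sparse admissible $y$, form the $m\times k$ matrix of inner products between the top-$m$ indices in $I^c\cap\ColorC$ and the (padded) support of $y$, apply Lemma~\ref{l: s-r}, bound the entries by $\max_{i\neq j\in\ColorC}|\langle X_i,X_j\rangle|$, use monotonicity of the order statistic and normalization of $\Proj_y(y)$, and substitute into \eqref{eq: raw recurrent}. The only cosmetic difference is your padding of the column set to size exactly $k$ with genuine indices (the paper works directly with the $|\supp y|\leq k$ columns), and in the degenerate case you mention it is cleaner to pad with zero columns rather than pass to a smaller ambient dimension $k'$, since Lemma~\ref{l: s-r} would then require $k'\geq 12/\delta^2$ and would produce $\max_{i,j}|t_{ij}|/\sqrt{k'}$ instead of $\max_{i,j}|t_{ij}|/\sqrt{k}$.
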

\begin{proof}
Fix a realization of the vectors $X_1,X_2,\dots,X_N$ and of the sets $I,\ColorC$, and consider the quantity
$$\sup\limits_{\substack{y\in\Sph^{N-1},\\ |\supp y|\leq k,\\ \supp y\subset I\cap\ColorC}}
\kmax{m}{I^c\cap\ColorC}{|W_{y,\ell}|}.$$
Without loss of generality, we can assume that it is non-zero.
Let $\widetilde y\in\Sph^{N-1}$ with $|\supp \widetilde y|\leq k$ and $\supp \widetilde y\subset I\cap\ColorC$
be a vector which delivers the supremum in the above expression.
Note that necessarily $|I^c\cap\ColorC|\geq m$.
Let $U\subset I^c\cap\ColorC$
be a set of indices $\ell$ of cardinality $m$ corresponding to $m$ largest elements of the
sequence $(|W_{\widetilde y,\ell}|)_{\ell\in I^c\cap\ColorC}$, and let $V:=\supp\widetilde y\subset I\cap\ColorC$.
Then we define an $m\times |V|$ matrix $T=(t_{\ell j})$ whose elements are the inner products
$\langle X_\ell,X_j\rangle$ ($\ell\in U$, $j\in V$).
For convenience, we index the elements of the
matrix over the Cartesian product $U\times V$. Then we can define the multiplication $T\widetilde y$
in a natural way by setting $T\widetilde y:=(W_{\widetilde y,\ell})_{\ell\in U}$.
Note that
$$\min\limits_{\ell\in U}\Bigl|\sum\limits_{j\in V} t_{\ell j}\widetilde y_j\Bigr|=\kmax{m}{I^c\cap\ColorC}{|W_{\widetilde y,\ell}|}.$$
Then, applying Lemma~\ref{l: s-r}, we get that there is a coordinate projection $\Proj:\R^V\to\R^V$
of rank at most $\delta k$ such that
\begin{align*}
{C_{\text{\tiny\ref{l: s-r}}}}^{-1}\delta^2\kmax{m}{I^c\cap\ColorC}{|W_{\widetilde y,\ell}|}
&\leq
\frac{\max_{\ell,j}|t_{\ell j}|}{\sqrt{k}}+\kmax{\lfloor m/4\rfloor}{U}|T\Proj (\widetilde y)|_{\ell}\\
&\leq \frac{\max\limits_{i\neq j\in \ColorC}|\langle X_i,X_j\rangle|}{\sqrt{k}}
+\kmax{\lfloor m/4\rfloor}{I^c\cap\ColorC}|W_{\Proj \widetilde y,\ell}|.
\end{align*}
Hence,
$$\sup\limits_{\substack{y\in\Sph^{N-1},\\ |\supp y|\leq k,\\ \supp y\subset I\cap\ColorC}}
\kmax{m}{I^c\cap\ColorC}{|W_{y,\ell}|}
\leq \frac{C_{\text{\tiny\ref{l: s-r}}}\max\limits_{i\neq j\in \ColorC}|\langle X_i,X_j\rangle|}{\delta^2\sqrt{k}}+
C_{\text{\tiny\ref{l: s-r}}}\delta^{-2}\sup\limits_{\substack{y\in\Sph^{N-1},\\ |\supp y|\leq \delta k,\\ \supp y\subset I\cap\ColorC}}
\kmax{\lfloor m/4\rfloor}{I^c\cap\ColorC}{|W_{y,\ell}|}.$$
Repeating the argument for
$$\sup\limits_{\substack{z\in\Sph^{N-1},\\ |\supp z|\leq k,\\ \supp z\subset I^c\cap\ColorC}}
\kmax{m}{I\cap\ColorC}{|W_{z,\ell}|},$$
and applying relation \eqref{eq: raw recurrent}, we obtain the statement.
\end{proof}

The next lemma is a variation of the standard procedure of passing from supremum over a set of vectors
to the supremum over a net.

\begin{lemma}\label{l: net}
Let $\rho\in(0,1]$, $r,h,p,q\in\N$ with $r\geq 2$, and let $\Net$ be
a support-preserving Euclidean $\rho$-net on the set of all $h$-sparse unit vectors in $\R^q$. Further,
let $T=(t_{ij})$ be any $p\times q$ matrix. Then
$$\sup\limits_{\substack{u\in\Sph^{q-1},\\ |\supp u|\leq h}}\kmax{r}{[p]}|Tu|_\ell
\leq 2\sup\limits_{v\in\Net}\kmax{\lfloor r/2\rfloor}{[p]}|Tv|_{\ell}+\frac{4\rho}{\sqrt{r}}
\sup\limits_{\substack{u\in\Sph^{q-1},\\ |\supp u|\leq h}}\Bigl(\sum\limits_{i=1}^r \kmax{i}{[p]}{|Tu|_\ell}^2\Bigr)^{1/2}.$$
\end{lemma}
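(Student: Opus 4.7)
\textbf{Proof plan for Lemma~\ref{l: net}.} The statement is the adaptation of the classical net-argument (passing from a supremum over the sphere to a supremum over a discrete set) to the $r$-th order statistic $\kmax{r}{[p]}$. The plan is to combine a support-preserving net approximation with a pigeonhole ``doubling'' inequality for $\kmax{r}{}$, and then to control the residual using sparsity of the difference.

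Fix an $h$-sparse unit vector $u\in\R^q$ and pick $v\in\Net$ with $\supp v\subset\supp u$ and $\|u-v\|\leq\rho$. The coordinate-wise triangle inequality $|Tu|_\ell\leq|Tv|_\ell+|T(u-v)|_\ell$ feeds into the key pigeonhole step: set $a:=\kmax{\lfloor r/2\rfloor}{[p]}|Tv|_\ell$, $b:=\kmax{\lfloor r/2\rfloor}{[p]}|T(u-v)|_\ell$ and $t:=\max(a,b)$. If $\kmax{r}{[p]}|Tu|_\ell>2t$, then at least $r$ indices $\ell$ would satisfy $|Tu|_\ell>2t$, and at each such $\ell$ we would have $|Tv|_\ell>t\geq a$ or $|T(u-v)|_\ell>t\geq b$. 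But by the very definition of the $\lfloor r/2\rfloor$-th largest value, strictly fewer than $\lfloor r/2\rfloor$ indices can satisfy each of these conditions; their union has size at most $2(\lfloor r/2\rfloor-1)<r$, a contradiction. Hence
$$\kmax{r}{[p]}|Tu|_\ell\leq 2\kmax{\lfloor r/2\rfloor}{[p]}|Tv|_\ell+2\kmax{\lfloor r/2\rfloor}{[p]}|T(u-v)|_\ell.$$

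For the residual, assume $u\neq v$ and normalize $w:=(u-v)/\|u-v\|$; since $\supp v\subset\supp u$, the vector $w$ is again an $h$-sparse unit vector. Using the elementary bound that the $s$-th largest entry of a nonnegative sequence is at most the quadratic average of its top $s$ entries, and then $\lfloor r/2\rfloor\geq r/4$ (valid for all $r\geq 2$), together with extending the sum to $r$ terms,
$$\kmax{\lfloor r/2\rfloor}{[p]}|T(u-v)|_\ell\leq\frac{\|u-v\|}{\sqrt{\lfloor r/2\rfloor}}\Bigl(\sum_{i=1}^{\lfloor r/2\rfloor}\kmax{i}{[p]}{|Tw|_\ell}^2\Bigr)^{1/2}\leq\frac{2\rho}{\sqrt{r}}\Bigl(\sum_{i=1}^r\kmax{i}{[p]}{|Tw|_\ell}^2\Bigr)^{1/2}.$$

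Combining the last two displayed inequalities and taking supremum over all admissible $u$ (noting that $v$ ranges over $\Net$ and $w$ over $h$-sparse unit vectors) yields the claimed bound. The only nonroutine ingredient is the pigeonhole doubling inequality for $\kmax{r}{[p]}$ of a coordinate-wise sum; the rest is standard bookkeeping, and I do not foresee any obstacle in executing the details.
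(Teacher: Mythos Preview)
Your proof is correct and follows essentially the same route as the paper: pick a support-preserving net point, use a pigeonhole argument on the top order statistics to pass from $\kmax{r}{}$ to $\kmax{\lfloor r/2\rfloor}{}$, normalize the residual $u-v$ to an $h$-sparse unit vector, and absorb it into the quadratic sum. The only cosmetic difference is that you prove an additive splitting $\kmax{r}{}|Tu|\leq 2\kmax{\lfloor r/2\rfloor}{}|Tv|+2\kmax{\lfloor r/2\rfloor}{}|T(u-v)|$ via a symmetric pigeonhole, whereas the paper argues by dichotomy (if $2\kmax{\lfloor r/2\rfloor}{}|Tv|$ fails to dominate, then at least $r/2$ of the top $r$ indices of $|Tu|$ have $|T(u-v)|\geq\frac{1}{2}\kmax{r}{}|Tu|$, which is fed directly into the quadratic sum); the two variants yield the same constants and the same conclusion.
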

\begin{proof}
Without loss of generality, $r\leq p$.
Fix a vector $\widetilde u\in \Sph^{q-1}$ with $|\supp \widetilde u|\leq h$.
By the definition of $\Net$, there is $\widetilde v\in\Net$
with $\supp\widetilde v\subset\supp\widetilde u$ and $\|\widetilde u-\widetilde v\|\leq \rho$.
Assume that $2\kmax{\lfloor r/2\rfloor}{[p]}|T\widetilde v|_{\ell}<\kmax{r}{[p]}|T\widetilde u|_\ell$.
Let $\sigma$ be a permutation on $p$ elements such that the sequence $(|T \widetilde u|_{\sigma(i)})$, $1\leq i\leq p$,
is non-increasing. Then the last condition implies that there is a subset $J\subset [r]$ of cardinality at least $r/2$
such that $|T\widetilde u|_{\sigma(i)}>2|T\widetilde v|_{\sigma(i)}$ for all $i\in J$, implying that
$|T(\widetilde u-\widetilde v)|_i\geq \frac{1}{2}\kmax{r}{[p]}|T\widetilde u|_\ell$ for at least $r/2$ indices $i\in[p]$.
At the same time, $\|\widetilde u-\widetilde v\|\leq \rho$ and $|\supp (\widetilde u-\widetilde v)|\leq h$.
Setting $s:=\frac{\widetilde u-\widetilde v}{\|\widetilde u-\widetilde v\|}$, it follows that
$$\sup\limits_{\substack{u\in\Sph^{q-1},\\ |\supp u|\leq h}}\Bigl(\sum\limits_{i=1}^r \kmax{i}{[p]}{|Tu|_\ell}^2\Bigr)^{1/2}
\geq \Bigl(\sum\limits_{i=1}^r \kmax{i}{[p]}{|Ts|_\ell}^2\Bigr)^{1/2}
\geq \frac{1}{2\rho}\sqrt{\frac{r}{2}}\kmax{r}{[p]}|T\widetilde u|_\ell.$$
Thus, we have shown that for any vector $\widetilde u\in \Sph^{q-1}$ with $|\supp \widetilde u|\leq h$ we have
$$\kmax{r}{[p]}|T\widetilde u|_\ell
\leq 2\sup\limits_{v\in\Net}\kmax{\lfloor r/2\rfloor}{[p]}|Tv|_{\ell}+\frac{4\rho}{\sqrt{r}}
\sup\limits_{\substack{u\in\Sph^{q-1},\\ |\supp u|\leq h}}\Bigl(\sum\limits_{i=1}^r \kmax{i}{[p]}{|Tu|_\ell}^2\Bigr)^{1/2}.$$
Taking the supremum over all admissible $\widetilde u$, we get the result.
\end{proof}

Combining Lemmas~\ref{l: interm recurrent} and~\ref{l: net}, we obtain the main result of the section:
\begin{prop}\label{p: recurrent}
Let $I\subset[N]$ be a fixed subset; $\ColorC\subset[N]$ be random, and let $\delta\in(0,1/3)$, with $k\geq 24/\delta^2$
and $N\geq 128C_{\text{\tiny\ref{l: s-r}}}\delta^{-2}k$.
Denote $t:=\lfloor\log_2\frac{\delta^2 k}{24}\rfloor$
and define $k_j:=\lfloor k/2^j\rfloor$, $0\leq j\leq t$.
Then there are subsets of $\delta k_j$-sparse unit vectors $\Net_j$ and $\Net_j'$ ($0\leq j\leq t-1$)
supported on $I$ and $I^c$, respectively, such that
$1)$ $|\Net_j|,|\Net_j'|\leq \bigl(\frac{C_{\text{\tiny\ref{l: s-p net}}}N}{\delta k_j}\bigr)^{2\delta k_j}$
for all admissible $j$, and $2)$ we have
\begin{align*}
g(k,\ColorC,I)&\leq
C_{\text{\tiny\ref{p: recurrent}}}\delta^{-2}\log (k)\,\max\limits_{i\neq j\in \ColorC}|\langle X_i,X_j\rangle|\\
&\hspace{1cm}+C_{\text{\tiny\ref{p: recurrent}}}\delta^{-2}
\sum\limits_{j=0}^{t-1}\sqrt{k_j}\sup\limits_{u\in\Net_j}\kmax{\lfloor k_{j+1}/16\rfloor}{I^c}{|W_{u,\ell}|}\\
&\hspace{1cm}+C_{\text{\tiny\ref{p: recurrent}}}\delta^{-2}\sum\limits_{j=0}^{t-1}\sqrt{k_j}
\sup\limits_{v\in\Net_j'}\kmax{\lfloor k_{j+1}/16\rfloor}{I}{|W_{v,\ell}|}.
\end{align*}
Here, $C_{\text{\tiny\ref{p: recurrent}}}>0$ is a universal constant, and $g$ and $W$ are defined
by~\eqref{eq: g def} and~\eqref{eq: w def}.
\end{prop}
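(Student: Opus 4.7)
The plan is to iterate the recursion of Lemma~\ref{l: interm recurrent} along the dyadic chain $k_j=\lfloor k/2^j\rfloor$, $j=0,\ldots,t$. At stage $j$, Lemma~\ref{l: interm recurrent} with the parameters $k\leftarrow k_j$ and $m\leftarrow k_{j+1}$ produces
\begin{equation*}
g(k_j,\ColorC,I)\leq g(k_{j+1},\ColorC,I)+2C_{\text{\tiny\ref{l: s-r}}}\delta^{-2}\max_{i\neq j\in\ColorC}|\langle X_i,X_j\rangle|+R_j^{(1)}+R_j^{(2)},
\end{equation*}
where $R_j^{(1)},R_j^{(2)}$ are the two symmetric sup-of-$\kmax$ terms at sparsity $\delta k_j$ and threshold $\lfloor k_{j+1}/4\rfloor$. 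The choice of $t$ ensures the hypotheses $k_j\geq 12/\delta^2$ and $k_{j+1}\geq 4$ hold throughout. The recursion terminates at $k_t\leq 48/\delta^2$, and the crude estimate $g(k_t,\ColorC,I)\leq k_t\max_{i\neq j\in\ColorC}|\langle X_i,X_j\rangle|$ (immediate from the $\ell_1$--$\ell_\infty$ bound $|y^T\!Mz|\leq\|y\|_1\|z\|_1\max_{ij}|M_{ij}|$ applied inside definition~\eqref{eq: g def}) folds into the $\delta^{-2}\log(k)\max$-contribution after summing across the $t=\Theta(\log k)$ iterations.

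To handle $R_j^{(1)}$ and $R_j^{(2)}$ I apply Lemma~\ref{l: net} with the matrix $T$ whose rows are $X_\ell^T$ for $\ell\in I^c$ (respectively $\ell\in I$), sparsity $h=\delta k_j$, threshold $r=\lfloor k_{j+1}/4\rfloor$, and radius $\rho_j$ to be chosen. The support-preserving nets $\Net_j\subset\R^I$ and $\Net_j'\subset\R^{I^c}$ come from Lemma~\ref{l: s-p net} applied in the respective coordinate subspaces; support preservation guarantees that net approximants automatically inherit the $I\cap\ColorC$ (respectively $I^c\cap\ColorC$) restriction of the originals. Lemma~\ref{l: net} outputs a net supremum at the halved threshold $\lfloor k_{j+1}/8\rfloor$, further coarsened to $\lfloor k_{j+1}/16\rfloor$ to absorb floor roundings, plus a Euclidean-tail correction
\begin{equation*}
\frac{4\rho_j}{\sqrt r}\sup_{\substack{u\in\Sph^{N-1},\\|\supp u|\leq\delta k_j,\\\supp u\subset I\cap\ColorC}}\Bigl(\sum_{i=1}^r\kmax{i}{I^c\cap\ColorC}|W_{u,\ell}|^2\Bigr)^{1/2}.
\end{equation*}
By duality this tail equals a bilinear form $\sup\langle\sum u_iX_i,\sum z_\ell X_\ell\rangle$ over admissible sparse unit vectors, and under $\delta\leq 1/3$ both sparsities $\delta k_j$ and $r$ are at most $k_{j+1}$, so it is bounded by $g(k_{j+1},\ColorC,I)$ itself.

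Each stage thus takes the form
\begin{equation*}
g(k_j,\ColorC,I)\leq(1+\beta_j)\,g(k_{j+1},\ColorC,I)+\mathrm{clean}_j,\qquad\beta_j=O(\rho_j/\delta^2),
\end{equation*}
where $\mathrm{clean}_j$ collects the $\delta^{-2}\max$-contribution and $O(\sqrt{k_j}\delta^{-2})$ times the two net suprema. The critical calibration is $\rho_j:=\delta k_j/(C_{\text{\tiny\ref{l: s-p net}}}N)$: plugging into Lemma~\ref{l: s-p net} gives the cardinality bound $|\Net_j|,|\Net_j'|\leq(C_{\text{\tiny\ref{l: s-p net}}}N/(\delta k_j))^{2\delta k_j}$ claimed in the proposition, while $\sum_j\beta_j=O(\sum_jk_j/(N\delta))=O(k/(N\delta))$ is a universal constant thanks to $N\geq 128C_{\text{\tiny\ref{l: s-r}}}\delta^{-2}k$. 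Consequently $\prod_j(1+\beta_j)=O(1)$, the telescope closes, and replacing the $\kmax$-domains $I^c\cap\ColorC$ and $I\cap\ColorC$ by the larger $I^c$ and $I$ yields the stated form.

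The main obstacle is precisely this simultaneous calibration: the Euclidean tail from Lemma~\ref{l: net} resurrects $g$ at a nearby scale, so $\rho_j$ must be small enough to absorb the resulting self-references through $\Theta(\log k)$ telescoping stages, yet not so small that the net cardinality breaches the target. The choice $\rho_j=\delta k_j/(C_{\text{\tiny\ref{l: s-p net}}}N)$ exactly \emph{saturates} Lemma~\ref{l: s-p net} -- producing the characteristic factor of $2$ in the exponent of the cardinality bound -- and simultaneously keeps $\sum_j\beta_j$ bounded via the hypothesis $N\geq 128C_{\text{\tiny\ref{l: s-r}}}\delta^{-2}k$.
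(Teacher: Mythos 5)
Your argument is correct and follows essentially the same route as the paper's proof: iterate Lemma~\ref{l: interm recurrent} along the dyadic scales $k_j$, discretize the resulting order-statistic suprema via Lemma~\ref{l: net} with support-preserving nets from Lemma~\ref{l: s-p net}, absorb the Euclidean tail term back into $g(k_{j+1},\ColorC,I)$, telescope the $(1+\beta_j)$ factors using $N\geq 128C_{\text{\tiny\ref{l: s-r}}}\delta^{-2}k$, and finish with the crude bound $g(k_t,\ColorC,I)\leq k_t\max\limits_{i\neq j\in \ColorC}|\langle X_i,X_j\rangle|\leq 48\delta^{-2}\max\limits_{i\neq j\in \ColorC}|\langle X_i,X_j\rangle|$, enlarging the index domains from $I\cap\ColorC$, $I^c\cap\ColorC$ to $I$, $I^c$ at the end. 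The only deviation is your net radius $\rho_j=\delta k_j/(C_{\text{\tiny\ref{l: s-p net}}}N)$ in place of the paper's $k_j/N$, which is immaterial since both choices meet the stated cardinality bound and keep $\sum_j\beta_j$ bounded.
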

\begin{proof}
First, we fix any $j<t$ and consider the quantity
$g(k_j,\ColorC,I)$. We define $\Net_j\subset\R^{I}$ as a support-preserving $\frac{k_j}{N}$-net
in the set of $\delta k_j$-sparse unit vectors in $\R^{I}$,
of cardinality at most
$\bigl(\frac{C_{\text{\tiny\ref{l: s-p net}}}}{\delta}\bigr)^{\delta k_j}\bigl(\frac{N}{k_j}\bigr)^{2\delta k_j}
\leq \bigl(\frac{C_{\text{\tiny\ref{l: s-p net}}}N}{\delta k_j}\bigr)^{2\delta k_j}$
(such a net exists in view of Lemma~\ref{l: s-p net}).
Similarly, we let $\Net_j'\subset\R^{I^c}$ be a support-preserving $\frac{k_j}{N}$-net
in the set of $\delta k_j$-sparse unit vectors in $\R^{I^c}$, with $|\Net_j'|\leq
\bigl(\frac{C_{\text{\tiny\ref{l: s-p net}}}N}{\delta k_j}\bigr)^{2\delta k_j}$.
Now, in view of Lemma~\ref{l: interm recurrent}, we have
\begin{align*}
g(k_j,\ColorC,I)\leq g(k_{j+1},\ColorC,I)&+
2C_{\text{\tiny\ref{l: s-r}}}\delta^{-2}\max\limits_{i\neq j\in \ColorC}|\langle X_i,X_j\rangle|\\
&+C_{\text{\tiny\ref{l: s-r}}}\sqrt{k_j}\delta^{-2}\sup\limits_{\substack{y\in\Sph^{N-1},
\\ |\supp y|\leq \delta k_j,\\ \supp y\subset I\cap\ColorC}}
\kmax{\lfloor k_{j+1}/4\rfloor}{I^c\cap\ColorC}{|W_{y,\ell}|}\\
&+C_{\text{\tiny\ref{l: s-r}}}\sqrt{k_j}\delta^{-2}\sup\limits_{\substack{z\in\Sph^{N-1},
\\ |\supp z|\leq \delta k_j,\\ \supp z\subset I^c\cap\ColorC}}
\kmax{\lfloor k_{j+1}/4\rfloor}{I\cap\ColorC}{|W_{z,\ell}|}.
\end{align*}
Applying Lemma~\ref{l: net} with $r:=\lfloor k_{j+1}/4\rfloor$, $\rho:=\frac{k_j}{N}$, $h:=\lfloor\delta k_j\rfloor$
and a $|I^c\cap \ColorC|\times |I\cap\ColorC|$ matrix $T=(\langle X_i,X_j\rangle)$
($(i,j)\in (I^c\cap \ColorC)\times (I\cap\ColorC)$), and using the definition of $W$'s \eqref{eq: w def}, we get
\begin{align*}
&\sup\limits_{\substack{y\in\Sph^{N-1},\\ |\supp y|\leq \delta k_j,\\ \supp y\subset I\cap\ColorC}}
\kmax{\lfloor k_{j+1}/4\rfloor}{I^c\cap\ColorC}{|W_{y,\ell}|}\\
&\hspace{0.5cm}\leq 2\sup\limits_{u\in\Net_j}\kmax{\lfloor\lfloor k_{j+1}/4\rfloor/2\rfloor}{I^c}{|W_{u,\ell}|}
+\frac{4\frac{k_j}{N}}{\sqrt{\lfloor k_{j+1}/4\rfloor}}
\sup\limits_{\substack{y\in\Sph^{N-1},\\ |\supp y|\leq \delta k_j,\\ \supp y\subset I\cap\ColorC}}
\Bigl(\sum\limits_{i=1}^{\lfloor k_{j+1}/4\rfloor} \kmax{i}{I^c\cap\ColorC}{W_{y,\ell}}^2\Bigr)^{1/2}\\
&\hspace{0.5cm}\leq 2\sup\limits_{u\in\Net_j}\kmax{\lfloor\lfloor k_{j+1}/4\rfloor/2\rfloor}{I^c}{|W_{u,\ell}|}
+\frac{4\frac{k_j}{N}}{\sqrt{\lfloor k_{j+1}/4\rfloor}}g(k_{j+1},\ColorC,I).
\end{align*}
Carrying out analogous estimate for
$$\sup\limits_{\substack{z\in\Sph^{N-1},
\\ |\supp z|\leq \delta k_j,\\ \supp z\subset I^c\cap\ColorC}}
\kmax{\lfloor k_{j+1}/4\rfloor}{I\cap\ColorC}{|W_{z,\ell}|},$$
we obtain
\begin{align*}
g(k_j,\ColorC,I)\leq \Bigl(1+
\frac{32C_{\text{\tiny\ref{l: s-r}}}k_j}{\delta^2N}
\Bigr)g(k_{j+1},\ColorC,I)&+
2C_{\text{\tiny\ref{l: s-r}}}\delta^{-2}\max\limits_{i\neq j\in \ColorC}|\langle X_i,X_j\rangle|\\
&+2C_{\text{\tiny\ref{l: s-r}}}\sqrt{k_j}\delta^{-2}\sup\limits_{u\in\Net_j}\kmax{\lfloor k_{j+1}/16\rfloor}{I^c}{|W_{u,\ell}|}\\
&+2C_{\text{\tiny\ref{l: s-r}}}\sqrt{k_j}\delta^{-2}\sup\limits_{v\in\Net_j'}
\kmax{\lfloor k_{j+1}/16\rfloor}{I}{|W_{v,\ell}|}.
\end{align*}
Note that, by the restrictions on $N$,
$$\prod\limits_{j=0}^t\Bigl(1+\frac{32C_{\text{\tiny\ref{l: s-r}}}k_j}{\delta^2N}\Bigr)
\leq \exp\Bigl(\sum_{j=0}^t\frac{32C_{\text{\tiny\ref{l: s-r}}}k_j}{\delta^2N}\Bigr)
\leq \exp\Bigl(\frac{128C_{\text{\tiny\ref{l: s-r}}}k}{\delta^{2}N}\Bigr)\leq \exp(1).$$
Hence, recursively applying the above estimate for $g(k_j,\ColorC,I)$ for all $0\leq j<t$, we obtain
\begin{align*}
\exp(-1)g(k,\ColorC,I)\leq
g(k_t,\ColorC,I)
&+2C_{\text{\tiny\ref{l: s-r}}}\delta^{-2}\log_2 (k)\,\max\limits_{i\neq j\in \ColorC}|\langle X_i,X_j\rangle|\\
&+2C_{\text{\tiny\ref{l: s-r}}}\delta^{-2}\sum\limits_{j=0}^{t-1}
\sqrt{k_j}\sup\limits_{u\in\Net_j}\kmax{\lfloor k_{j+1}/16\rfloor}{I^c}{|W_{u,\ell}|}\\
&+2C_{\text{\tiny\ref{l: s-r}}}\delta^{-2}\sum\limits_{j=0}^{t-1}\sqrt{k_j}
\sup\limits_{v\in\Net_j'}\kmax{\lfloor k_{j+1}/16\rfloor}{I}{|W_{v,\ell}|}.
\end{align*}
It remains to note that the quantity $g(k_t,\ColorC,I)$ can be estimated as
\begin{align*}
g(k_t,\ColorC,I)
&\leq\sup\limits_{\substack{y\in\Sph^{N-1},\\ |\supp y|\leq k_t,\\ \supp y\subset I\cap\ColorC}}
\sup\limits_{\substack{z\in\Sph^{N-1},\\ |\supp z|\leq k_t,\\ \supp z\subset I^c\cap\ColorC}}
\sum\limits_{i=1}^N\sum\limits_{j=1}^N |y_i z_j\langle X_i,X_j\rangle|\\
&\leq \max\limits_{i\neq j\in \ColorC}|\langle X_i,X_j\rangle|
\sup\limits_{\substack{y\in\Sph^{N-1},\\ |\supp y|\leq k_t}}
\sup\limits_{\substack{z\in\Sph^{N-1},\\ |\supp z|\leq k_t}}\sum\limits_{i,j=1}^N|y_iz_j|\\
&= k_t\max\limits_{i\neq j\in \ColorC}|\langle X_i,X_j\rangle|\\
&\leq 48\delta^{-2}\max\limits_{i\neq j\in \ColorC}|\langle X_i,X_j\rangle|.
\end{align*}
\end{proof}

\section{Quadratic forms --- Probabilistic estimates}\label{s: quadr prob}

In this section, we apply the deterministic bounds from Section~\ref{s: quadr det} to
obtain estimates for the tail distribution of quantity $f(n,[N])$ defined in \eqref{eq: f def}.
We always assume that $X$ is an $n$-dimensional centered isotropic vector; $X_1,X_2,\dots,X_N$ are its independent copies,
and additionally suppose that $\sup\limits_{y\in \Sph^{n-1}}\Exp|\langle X,y\rangle|^p\leq B$ for some $p>2$ and $B\geq 1$.

Let us start with the following corollary of Proposition~\ref{p: recurrent}:
\begin{prop}\label{p: prob w}
There is a sufficiently large universal constant $C_{\text{\tiny\ref{p: prob w}}}$ with the following property:
Let $I\subset[N]$ be fixed and $\ColorC\subset[N]$ be random,
and let $n,N>1$ with $\log\frac{N}{n}\geq C_{\text{\tiny\ref{p: prob w}}}\max\bigl(1,1/(p-2)\bigr)$.
Then we have
\begin{align*}
g(n,\ColorC,I)\leq
C_{\text{\tiny\ref{p: prob w}}}\log^2\frac{N}{n}\,\log (n)\,\max\limits_{i\neq j\in \ColorC}|\langle X_i,X_j\rangle|
+\frac{C_{\text{\tiny\ref{p: prob w}}}pB^{1/p}}{p-2}\log^2\frac{N}{n}\,\sqrt{n}\Bigl(\frac{N}{n}\Bigr)^{1/p}\sqrt{f(n,[N])}
\end{align*}
with probability at least $1-\frac{1}{N^3}$.
\end{prop}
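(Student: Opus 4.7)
The plan is to apply Proposition~\ref{p: recurrent} with $k=n$ and $\delta:=c/\log\frac{N}{n}$ for a sufficiently small absolute constant $c>0$. The factor $\delta^{-2}=c^{-2}\log^2\frac{N}{n}$ then produces the $\log^2\frac{N}{n}$ advertised in the claim; in particular, the $\max_{i\neq j\in\ColorC}|\langle X_i,X_j\rangle|$ summand in the conclusion of Proposition~\ref{p: recurrent} immediately matches the first summand of the present claim (deterministically). The remaining work is probabilistic: show that, with probability at least $1-N^{-3}$, each of the quantities $\sqrt{k_j}\sup_{u\in\Net_j}\kmax{m_j}{I^c}|W_{u,\ell}|$ and its symmetric analogue over $\Net_j'$ and $I$, for $j=0,\ldots,t-1$, sum up to the second summand, where $m_j:=\lfloor k_{j+1}/16\rfloor$ and $k_j=\lfloor n/2^j\rfloor$.

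For the probabilistic step, fix $j$ and $u\in\Net_j$. Conditioning on the $\sigma$-algebra generated by $\{X_i:i\in I\}$ makes $V_u:=\sum_i u_iX_i$ deterministic; since $\{X_\ell\}_{\ell\in I^c}$ is independent of this $\sigma$-algebra and satisfies the $p$-th moment hypothesis, the variables $W_{u,\ell}=\langle X_\ell,V_u\rangle$ ($\ell\in I^c$) are conditionally independent with $\Exp[|W_{u,\ell}|^p\mid V_u]\le B\|V_u\|^p$. Lemma~\ref{l: os est} applied with $h=p$ and $r=|I^c|\le N$ then yields, for $\tau_j(u):=L\,\|V_u\|\,(BN/k_j)^{1/p}$ with $L$ a sufficiently large absolute constant,
\[
\Prob\Bigl\{\kmax{m_j}{I^c}|W_{u,\ell}|\ge\tau_j(u)\,\Big|\,V_u\Bigr\}
\le \Bigl(\frac{e\,k_j}{L^{p}\,m_j}\Bigr)^{m_j}\le e^{-m_j},
\]
using that $k_j/m_j$ is bounded by an absolute constant.

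Averaging over $V_u$ and then union-bounding over $u\in\Net_j$ with the cardinality $|\Net_j|\le(C_{\text{\tiny\ref{l: s-p net}}}N/(\delta k_j))^{2\delta k_j}$ gives failure probability at most $\exp\bigl(2\delta k_j\log(CN/(\delta k_j))-m_j\bigr)$. Since $\delta=c/\log\frac{N}{n}$ and $k_j\le n$, the log-cardinality is bounded by $O(c)\cdot k_j$ (using the implicit lower bound $k_j\ge k_t$ of order $\delta^2 n$ from Proposition~\ref{p: recurrent} to control $\log(N/(\delta k_j))$ by $O(\log\frac{N}{n})$), which for $c$ small enough is dominated by $m_j$. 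Thus each event fails with probability exponentially small in $k_j$, and union-bounding over the $2t=O(\log n)$ indices keeps the total failure below $N^{-3}$. On the good event, the deterministic bound $\|V_u\|\le\sqrt{f(n,[N])}$ (valid because $u$ is $\delta k_j$-sparse with $\delta k_j\le n$) lets me replace $\|V_u\|$ uniformly, yielding $\sup_{u\in\Net_j}\kmax{m_j}{I^c}|W_{u,\ell}|\le L\,(BN/k_j)^{1/p}\,\sqrt{f(n,[N])}$.

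Multiplying by $\sqrt{k_j}$ and summing, the key computation is the geometric series
\[
\sum_{j=0}^{t-1}\sqrt{k_j}\,(BN/k_j)^{1/p}=B^{1/p}N^{1/p}\sum_{j=0}^{t-1}k_j^{\,1/2-1/p}\le B^{1/p}N^{1/p}\,\frac{n^{1/2-1/p}}{1-2^{-(1/2-1/p)}}\le C\,\frac{p}{p-2}\,B^{1/p}\sqrt{n}\,(N/n)^{1/p},
\]
where the bound on $1/(1-2^{-(1/2-1/p)})$ uses $1-2^{-x}\ge x\log 2/2$ for $x\in(0,1/2]$. Treating the symmetric term over $\Net_j'$ and $I$ identically and multiplying by $\delta^{-2}=c^{-2}\log^2\frac{N}{n}$ from Proposition~\ref{p: recurrent} produces the claimed bound. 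The main technical obstacle is the joint calibration of $\delta$ and $L$: $\delta$ must be small enough (of order $1/\log\frac{N}{n}$) so that the log-cardinality of the net is dominated by the exponent $m_j$ of the order-statistic tail, yet large enough that $\delta^{-2}$ contributes only a $\log^2\frac{N}{n}$ overhead; the hypothesis $\log\frac{N}{n}\ge C\max(1,1/(p-2))$ is precisely what makes this balance work while keeping the geometric series summable with constant of order $p/(p-2)$.
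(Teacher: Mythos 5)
Your overall strategy is the same as the paper's (apply Proposition~\ref{p: recurrent} with $k=n$ and $\delta\asymp 1/\log\frac{N}{n}$, condition on $\{X_i\}_{i\in I}$, bound the order statistics of the conditionally independent $W_{u,\ell}$ via Lemma~\ref{l: os est}, union bound over $\Net_j$, sum a geometric series), and your conditioning step with the threshold $\tau_j(u)=L\|V_u\|(BN/k_j)^{1/p}$ replaced by $\sqrt{f(n,[N])}$ on the good event is fine. But there is a genuine quantitative gap at the union bound, and it stems from a misreading of the recursion's stopping point. You assert that $k_j\geq k_t$ with $k_t$ of order $\delta^2 n$, which would let you bound $\log\bigl(N/(\delta k_j)\bigr)$ by $O(\log\frac{N}{n})$. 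In fact $t=\lfloor\log_2\frac{\delta^2 n}{24}\rfloor$ gives $k_t\asymp\delta^{-2}\asymp\log^2\frac{N}{n}$, a quantity independent of $n$. Consequently, at the sparsest levels the net $\Net_j$ consists of roughly $\delta k_j\asymp\log\frac{N}{n}$-sparse vectors and has cardinality about $\bigl(CN/(\delta k_j)\bigr)^{2\delta k_j}$, i.e.\ $N^{\Theta(\delta k_j)}$ points, whereas your per-vector tail bound is only $e^{-m_j}=e^{-\Theta(k_j)}$. The union bound closes only if $\delta k_j\log\frac{N}{\delta k_j}\ll k_j$, i.e.\ $\log N\lesssim c^{-1}\log\frac{N}{n}$; in the regime the paper is actually after (e.g.\ $N/n$ bounded, $n\to\infty$, which is admissible under $\log\frac{N}{n}\geq C\max(1,1/(p-2))$), one has $\log N\gg\log\frac{N}{n}$, the last levels have constant $k_j$ and $m_j$ but polynomially-in-$N$ many net points, and the failure probability you obtain blows up rather than being $\leq N^{-3}$. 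This is not merely a lossy estimate: with $N^{\Theta(\delta k_j)}$ essentially independent sparse directions, the supremum over $\Net_j$ of the $m_j$-th order statistic genuinely exceeds $L\|V_u\|(BN/k_j)^{1/p}$ with non-negligible probability for heavy-tailed $X$, so the claimed uniform bound is false at your threshold, not just unproved.

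The paper's proof repairs exactly this point by inflating the threshold: it applies Lemma~\ref{l: os est} with $\tau_j:=(32eB)^{1/p}\sqrt{f(n,[N])}\,(N/k_{j+1})^{p^{-1}(1+256\delta)}$, so that the per-vector failure probability becomes $(k_{j+1}/N)^{256\delta\lfloor k_{j+1}/16\rfloor}\leq(k_{j+1}/N)^{4\delta k_j}$ --- polynomial in $k_{j+1}/N$ with exponent proportional to $\delta k_j$, which beats the net cardinality $\bigl(C_{\text{\tiny\ref{l: s-p net}}}N/(\delta k_j)\bigr)^{2\delta k_j}$ at every level $j$, uniformly in how large $n$ is. The price of the boost is the factor $(N/k_{j+1})^{256\delta/p}=e^{O(1)}2^{256\delta (j+1)/p}$, which is absorbed into the geometric series $\sum_j 2^{j/p+256\delta j/p-j/2}$; the hypothesis $\log\frac{N}{n}\geq C_{\text{\tiny\ref{p: prob w}}}\max(1,1/(p-2))$ is used precisely to keep this boosted series bounded by $Cp/(p-2)$ (in your version the series converges automatically, which should have been a hint that the hypothesis was being used for the wrong purpose). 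A secondary omission: Proposition~\ref{p: recurrent} needs $k\geq 24/\delta^2$, so the case $n<24\log^2\frac{N}{n}$ must be treated separately by the crude bound $g(n,\ColorC,I)\leq n\max_{i\neq j\in\ColorC}|\langle X_i,X_j\rangle|$, as the paper does.
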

\begin{proof}
First, consider the case when $n< 24\log^2\frac{N}{n}$.
Then a crude deterministic bound on $g(n,\ColorC,I)$ gives
$$g(n,\ColorC,I)\leq n\max\limits_{i\neq j\in \ColorC}|\langle X_i,X_j\rangle|<24\log^2\frac{N}{n}
\,\max\limits_{i\neq j\in \ColorC}|\langle X_i,X_j\rangle|,$$
and we get the statement.

For the rest of the proof, we assume that $n\geq 24\log^2\frac{N}{n}$.
Define $\delta:=\frac{1}{\log(N/n)}$, $t:=\bigl\lfloor\log_2\frac{\delta^2 n}{24}\bigr\rfloor$
and $k_j:=\bigl\lfloor n/2^j\bigr\rfloor$ ($j=0,1,\dots,t$). We can assume that $\log(N/n)$ is sufficiently large, so that
the conditions of Proposition~\ref{p: recurrent} are satisfied.
Fix for a moment any $0\leq j<t$ and consider the quantity
$$\sup\limits_{u\in\Net_j}\kmax{\lfloor k_{j+1}/16\rfloor}{I^c}{|W_{u,\ell}|},$$
where $\Net_j\subset\R^{I}$ is defined in Proposition~\ref{p: recurrent}.
Fix any $u\in\Net_j$. Note that, conditioned on a realization of vectors $X_i$ ($i\in I$),
the quantities $W_{u,\ell}=\langle X_\ell,\sum_{i\in I} u_i X_i\rangle$ ($\ell\in I^c$) are jointly independent.
Moreover, in view of the moment assumptions on $X$,
the conditional expectation of $|W_{u,\ell}|^p$ given $X_i$ ($i\in I$), satisfies
$$\Exp\bigl(|W_{u,\ell}|^p\,|\,X_i,\;i\in I\bigr)\leq B\Bigl\|\sum_{i\in I} u_i X_i\Bigr\|^p
\leq B\bigl(f(n,[N])\bigr)^{p/2}.$$
Applying Lemma~\ref{l: os est} to $|W_{u,\ell}|$'s with
$\tau_j:=(32eB)^{1/p}\sqrt{f(n,[N])}\bigl(\frac{N}{k_{j+1}}\bigr)^{p^{-1}(1+256\delta)}$,
we get
\begin{align*}
\Prob\bigl\{\kmax{\lfloor k_{j+1}/16\rfloor}{I^c}{|W_{u,\ell}|}\geq \tau_j\bigr\}
&\leq \biggl(\frac{eBf(n,[N])^{p/2}N}{{\tau_j}^p \lfloor k_{j+1}/16\rfloor}\biggr)^{\lfloor k_{j+1}/16\rfloor}\\
&\leq \biggl(\frac{k_{j+1}}{N}\biggr)^{256\delta\lfloor k_{j+1}/16\rfloor}\\
&\leq \biggl(\frac{k_{j+1}}{N}\biggr)^{4\delta k_{j}}.
\end{align*}
Now, taking the union bound over all $u\in\Net_j$, we get
$$\Prob\Bigl\{\sup\limits_{u\in\Net_j}\kmax{\lfloor k_{j+1}/16\rfloor}{I^c}{|W_{u,\ell}|}\geq \tau_j\Bigr\}
\leq \biggl(\frac{k_{j+1}}{N}\biggr)^{4\delta k_{j}}|\Net_j|
\leq \Bigl(\frac{C_{\text{\tiny\ref{l: s-p net}}}k_j}{\delta N}\Bigr)^{2\delta k_j}.$$
We can assume that $N\geq {C_{\text{\tiny\ref{l: s-p net}}}}^2 \delta^{-2}k_j$, so that
$$\Bigl(\frac{C_{\text{\tiny\ref{l: s-p net}}}k_j}{\delta N}\Bigr)^{2\delta k_j}
\leq \Bigl(\frac{k_j}{N}\Bigr)^{\delta k_j}\leq \Bigl(\frac{k_t}{N}\Bigr)^{\delta k_t}
\ll \frac{1}{N^4}.$$
Thus,
$$\Prob\Bigl\{\sup\limits_{u\in\Net_j}\kmax{\lfloor k_{j+1}/16\rfloor}{I^c}{|W_{u,\ell}|}\geq \tau_j\Bigr\}
\leq \frac{1}{N^4}.$$
Summing up over $j$, repeating the same argument for nets $\Net_j'$ and applying
Proposition~\ref{p: recurrent}, we get
$$g(n,\ColorC,I)\leq
C_{\text{\tiny\ref{p: recurrent}}}\delta^{-2}\log (n)\max\limits_{i\neq j\in \ColorC}|\langle X_i,X_j\rangle|
+2C_{\text{\tiny\ref{p: recurrent}}}\delta^{-2}\sum\limits_{j=0}^{t-1}\sqrt{k_j}\tau_j$$
with probability at least $1-\frac{1}{N^3}$.
It remains to note that
for some constant $C>0$ the sum $\sum_{j=0}^{t-1}\sqrt{k_j}\tau_j$ can be estimated as
$$\sum_{j=0}^{t-1}\sqrt{k_j}\tau_j\leq CB^{1/p}\sqrt{n}\Bigl(\frac{N}{n}\Bigr)^{1/p}\sqrt{f(n,[N])}
\sum\limits_{j=0}^{t-1}2^{j/p+256\delta j/p-j/2},$$
and for a large enough constant $C_{\text{\tiny\ref{p: prob w}}}$,
the condition $\delta^{-1}=\log\frac{N}{n}\geq C_{\text{\tiny\ref{p: prob w}}}/(p-2)$
implies that $\sum_{j=0}^{t-1}2^{j/p+256\delta j/p-j/2}
\leq \sum_{j=0}^{t-1}2^{j/(2p)-j/4}\leq \frac{\widetilde C p}{p-2}$.
\end{proof}

\begin{lemma}\label{l: g counting}
Assume that $n,N>1$ satisfy
$\log\frac{N}{n}\geq C_{\text{\tiny\ref{p: prob w}}}\max\bigl(1,1/(p-2)\bigr)$.
Let $H>0$, $m\in\N$, and let $\ColorC_m^H$ be the class from the coloring of $X_1,X_2,\dots,X_N$
with threshold $H$.
Then for a universal constant $C_{\text{\tiny\ref{l: g counting}}}$ we have
\begin{align*}
f(n,\ColorC_m^H)\leq
\max\limits_{i\leq N}\|X_i\|^2
&+C_{\text{\tiny\ref{l: g counting}}}H\log^2\frac{N}{n}\,\log (n)\,\max\limits_{i\leq N}\|X_i\|\\
&+\frac{C_{\text{\tiny\ref{l: g counting}}}pB^{1/p}}{p-2}\log^2\frac{N}{n}\,\sqrt{n}
\Bigl(\frac{N}{n}\Bigr)^{1/p}\sqrt{f(n,[N])}
\end{align*}
with probability at least $1-\frac{1}{N^2}$.
\end{lemma}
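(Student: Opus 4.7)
The starting point is the chain of deterministic inequalities established at the beginning of Section~\ref{s: quadr det}, which yields
\[
f(n, \ColorC_m^H) \leq \max_{i \leq N}\|X_i\|^2 + 2^{-N+2}\sum_{I \subset [N]} g(n, \ColorC_m^H, I);
\]
the task thus reduces to controlling the right-hand sum. The defining property of the coloring with threshold $H$ guarantees $|\langle X_i, X_j\rangle| \leq H\max_{h\leq N}\|X_h\|$ for every pair of distinct $i, j \in \ColorC_m^H$, which I intend to exploit in two complementary ways. First, substituting this pairwise estimate into Proposition~\ref{p: prob w} applied with $\ColorC := \ColorC_m^H$ yields, for each fixed $I \subset [N]$, the bound $g(n, \ColorC_m^H, I) \leq R$ with probability at least $1 - 1/N^3$, where
\[
R := C_{\text{\tiny\ref{p: prob w}}}\log^2\frac{N}{n}\log(n)\, H\max_{h\leq N}\|X_h\| + \frac{C_{\text{\tiny\ref{p: prob w}}}pB^{1/p}}{p-2}\log^2\frac{N}{n}\sqrt{n}\Bigl(\frac{N}{n}\Bigr)^{1/p}\sqrt{f(n,[N])}.
\]
Second, combining the coloring's pairwise bound with Cauchy--Schwarz and the $n$-sparsity of the vectors $y, z$ appearing in the definition of $g$ yields the uniform deterministic inequality
\[
g(n, \ColorC_m^H, I) \leq nH\max_{h\leq N}\|X_h\|\qquad\text{for every }I\subset[N].
\]

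The main obstacle is that a direct union bound of the per-$I$ estimate over the $2^N$ subsets of $[N]$ is unaffordable with a failure probability of only $1/N^3$. I plan to bypass this via Markov's inequality applied to the random count
\[
\mathcal{B} := \#\bigl\{I \subset [N] : g(n, \ColorC_m^H, I) > R\bigr\}.
\]
By Fubini, $\Exp\mathcal{B} \leq 2^N/N^3$, so with probability at least $1 - 1/N^2$ we have $\mathcal{B} \leq 2^N/N$. On this event, the per-$I$ bound $R$ handles the ``good'' subsets while the uniform deterministic bound handles the at most $2^N/N$ ``bad'' ones:
\[
\sum_I g(n, \ColorC_m^H, I) \leq 2^N R + \frac{2^N}{N}\cdot nH\max_{h\leq N}\|X_h\|.
\]

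Dividing by $2^{N-2}$ and using $N \geq 2n$, the second summand becomes at most $2H\max_{h\leq N}\|X_h\|$, which, under the standing hypothesis on $\log(N/n)$ and the fact that $n\geq 2$, is dominated by the first summand of $4R$. Combining with the deterministic inequality of the first paragraph yields
\[
f(n, \ColorC_m^H) \leq \max_{i\leq N}\|X_i\|^2 + C_{\text{\tiny\ref{l: g counting}}} R,
\]
which is exactly the claim. The essential difficulty is structural (handling the exponentially many subsets $I$ with a mere polynomial tail bound), and the crucial observation is that the coloring's pairwise guarantee supplies just the right deterministic a priori estimate to close the gap: the cruder bound $g \leq n\max_i\|X_i\|^2$ would contribute an extra $\max_i\|X_i\|^2$ term that cannot be absorbed into the other summands without distorting the target coefficient $1$ in front of $\max_i\|X_i\|^2$.
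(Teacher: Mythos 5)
Your argument is correct and follows essentially the same route as the paper: the same deterministic decomposition of $f(n,\ColorC_m^H)$ into the diagonal term plus the average of $g(n,\ColorC_m^H,I)$ over all $I\subset[N]$, the same per-$I$ application of Proposition~\ref{p: prob w} combined with the coloring's pairwise bound, the same Markov/counting argument to show at most $2^N/N$ ``bad'' subsets with probability $1-1/N^2$, and the same crude deterministic bound (you use $g\leq nH\max_h\|X_h\|$ via sparsity, the paper uses the slightly weaker $NH\max_h\|X_h\|$; either suffices) to absorb the bad subsets' contribution. No gaps.
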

\begin{proof}
Recall that
\begin{equation}\label{eq: aux 123}
f(n,\ColorC_m^H)\leq\max\limits_{i\leq N}\|X_i\|^2+2^{-N+2}\sum\limits_{I\subset [N]}g(n,\ColorC_m^H,I).
\end{equation}
Note that by Proposition~\ref{p: prob w}, together with the definition of the class $\ColorC_m^H$, we have for any $I\subset[N]$:
\begin{align}
g(n,\ColorC_m^H,I)&> C_{\text{\tiny\ref{p: prob w}}}\log^2\frac{N}{n}\,\log (n)\,\max\limits_{i\neq j\in \ColorC_m^H}|\langle X_i,X_j\rangle|
\nonumber\\
&\hspace{1cm}+\frac{C_{\text{\tiny\ref{p: prob w}}}pB^{1/p}}{p-2}\log^2\frac{N}{n}\,\sqrt{n}\Bigl(\frac{N}{n}\Bigr)^{1/p}\sqrt{f(n,[N])}
\label{eq: aux 14}
\end{align}
with probability at most $\frac{1}{N^3}$,
whence
$$\Exp\bigl|\bigl\{I\subset[N]:\,g(n,\ColorC_m^H,I)\mbox{ satisfies \eqref{eq: aux 14}}\bigr\}\bigr|
\leq \frac{2^N}{N^3}.$$
Thus, by Markov's inequality,
$$\Prob\bigl\{g(n,\ColorC_m^H,I)\mbox{ satisfies \eqref{eq: aux 14}
for at least $2^N/N$ subsets $I$}\bigr\}\leq \frac{1}{N^2}.$$
At the same time, a crude deterministic bound for $g(n,\ColorC_m^H,I)$ gives
$$g(n,\ColorC_m^H,I)\leq N\max\limits_{i\neq j\in \ColorC_m^H}|\langle X_i,X_j\rangle|
\leq NH\max\limits_{i\leq N}\|X_i\|\;\mbox{ for all $I\subset[N]$}.$$
Combining the estimates, we obtain
\begin{align*}
\Prob\Bigl\{\sum\limits_{I\subset [N]}g(n,\ColorC_m^H,I)
&\leq 2C_{\text{\tiny\ref{p: prob w}}}2^NH\log^2\frac{N}{n}\,\log (n)\,\max\limits_{i\leq N}\|X_i\|\\
&\hspace{1cm}+\frac{C_{\text{\tiny\ref{p: prob w}}}2^NpB^{1/p}}{p-2}\log^2\frac{N}{n}\,\sqrt{n}
\Bigl(\frac{N}{n}\Bigr)^{1/p}\sqrt{f(n,[N])} \Bigr\}
\geq 1-\frac{1}{N^2}.
\end{align*}
Thus, applying \eqref{eq: aux 123}, we get
\begin{align*}
f(n,\ColorC_m^H)\leq
\max\limits_{i\leq N}\|X_i\|^2
&+8C_{\text{\tiny\ref{p: prob w}}}H\log^2\frac{N}{n}\,\log (n)\,\max\limits_{i\leq N}\|X_i\|\\
&+\frac{4C_{\text{\tiny\ref{p: prob w}}}pB^{1/p}}{p-2}\log^2\frac{N}{n}\,\sqrt{n}
\Bigl(\frac{N}{n}\Bigr)^{1/p}\sqrt{f(n,[N])}
\end{align*}
with probability at least $1-\frac{1}{N^2}$.
\end{proof}

Combining the last statement with the proposition from Section~\ref{s: coloring}, we obtain the
main result of this section.

\begin{prop}\label{p: f est f}
There is a non-increasing function $\nu:(2,\infty)\to\R_+$ with the following property:
Let $N,n>1$, $p>2$, $B\geq 1$, and assume that
$\log\frac{N}{n}\geq C_{\text{\tiny\ref{p: prob w}}}\max\bigl(1,1/(p-2)\bigr)$.
Let, as before, $X$ be a centered $n$-dimensional isotropic random vector, $X_1,X_2,\dots,X_N$
be its independent copies, and assume that
$\sup\limits_{a\in\Sph^{n-1}}\Exp|\langle X,a\rangle|^p\leq B$.
Finally, let $f(n,[N])$ be defined by \eqref{eq: f def}.
Then
$$f(n,[N])\leq \nu(p)\max\limits_{i\leq N}\|X_i\|^2+\nu(p)B^{2/p}n\Bigl(\frac{N}{n}\Bigr)^{2/p}\log^4\frac{N}{n}$$
with probability at least $1-\frac{2}{n^2}$.
\end{prop}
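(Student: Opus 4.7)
The plan is to apply the coloring machinery of Section~\ref{s: coloring} with a carefully tuned threshold $H$, use Lemma~\ref{l: g counting} on each color class, and combine the pieces via a convexity-type inequality that bounds $f(n,[N])$ by the quantities $f(n,\ColorC_k^H)$. First I would fix the parameters: pick an integer $m=m(p)$ with $m>(p+4)/(p-2)$, and set
$$H:=B^{1/p}\,n^{1/2-1/p}\,N^{1/p}/\log n.$$
With this choice one computes $BNH^{-p}=(\log n)^{p}/n^{p/2-1}$, so Proposition~\ref{p: chrom} gives
$$\Prob\{\chi(\Graph_H)\leq m\}\geq 1-(\log n)^{mp}\,n^{p/2-m(p/2-1)}\geq 1-\tfrac{1}{2}n^{-2}$$
for $n\geq n_0(p)$, since $m(p/2-1)>p/2+2$ by construction. (The regime of small $n$ can always be absorbed into the constant $\nu(p)$.)

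Next I would apply Lemma~\ref{l: g counting} to each of $\ColorC_1^H,\dots,\ColorC_m^H$ with this $H$. Each application fails with probability at most $1/N^{2}$, so a union bound gives, simultaneously for every $k\leq m$,
$$f(n,\ColorC_k^H)\leq \max_i\|X_i\|^2+C_1 H\log^2\tfrac{N}{n}\,\log n\,\max_i\|X_i\|+C_2\tfrac{pB^{1/p}}{p-2}\log^2\tfrac{N}{n}\,\sqrt{n}\bigl(\tfrac{N}{n}\bigr)^{1/p}\sqrt{f(n,[N])}$$
with probability at least $1-m/N^{2}\geq 1-\tfrac{1}{2}n^{-2}$. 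Intersecting with the event $\{\chi(\Graph_H)\leq m\}$ yields a good event of probability $\geq 1-n^{-2}$ (which can be arranged to be $\geq 1-2n^{-2}$ after absorbing constants).

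The key combining step is the deterministic inequality $f(n,[N])\leq \chi(\Graph_H)\max_{k}f(n,\ColorC_k^H)$. This is proved as follows: any admissible test vector $y\in\Sph^{N-1}$ with $|\supp y|\leq n$ decomposes as $y=\sum_k y^{(k)}$ with $y^{(k)}:=y\mathbf{1}_{\ColorC_k^H}$; the triangle inequality followed by Cauchy--Schwarz over the $\chi$ non-empty classes gives
$$\Bigl\|\sum_i y_i X_i\Bigr\|^2\leq \chi\sum_k\Bigl\|\sum_i y_i^{(k)}X_i\Bigr\|^2\leq \chi\sum_k\|y^{(k)}\|^2\,f(n,\ColorC_k^H)\leq\chi\max_k f(n,\ColorC_k^H),$$
using that each normalised $y^{(k)}/\|y^{(k)}\|$ is a valid test vector in the definition of $f(n,\ColorC_k^H)$.

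On the good event, the previous two displays combine into a self-bounding inequality $f\leq A+B'\sqrt{f}$, with $A:=m\max_i\|X_i\|^2+mC_1 H\log^2(N/n)\log(n)\max_i\|X_i\|$ and $B':=mC_2 B^{1/p}\log^2(N/n)\sqrt{n}(N/n)^{1/p}$ (times a $p/(p-2)$ factor). Absorbing the cross term via $2ab\leq a^2+b^2$ and using the elementary implication $x\leq A+B'\sqrt{x}\Rightarrow x\leq 2A+(B')^2$ gives the desired bound, once one notices that the compensating $1/\log n$ in $H$ makes $H^{2}\log^{4}(N/n)\log^{2}(n)$ evaluate to $B^{2/p}n^{1-2/p}N^{2/p}\log^{4}(N/n)$, which is of the target form. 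The main obstacle is precisely this balancing act: $H$ must be \emph{large} enough so that $\chi(\Graph_H)=O_p(1)$ with probability $\geq 1-n^{-2}$, yet \emph{small} enough so that the coloring cost $\chi\cdot H\log^{2}(N/n)\log(n)\max_i\|X_i\|$ is absorbed into the target. The factor $(\log n)^{-1}$ in $H$ is exactly what trims an extra $\log n$ appearing in Lemma~\ref{l: g counting}, so that only $\log^{4}(N/n)$ survives in the final estimate.
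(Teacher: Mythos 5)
Your proposal is correct and follows essentially the same route as the paper: the same threshold $H=(BN)^{1/p}n^{1/2-1/p}/\log n$, Proposition~\ref{p: chrom} to bound $\chi(\Graph_H)$ by an integer depending only on $p$, Lemma~\ref{l: g counting} on each color class with a union bound, a Cauchy--Schwarz decomposition of test vectors over the color classes (the paper uses $f(n,[N])\leq\sum_m f(n,\ColorC_m^H)$ rather than $\chi\max_m$, which is the same up to constants), and resolution of the resulting self-bounding inequality. The only differences are cosmetic (your $n_0(p)$ absorption versus the paper's explicit small-$n$ threshold, and the constant in $x\leq A+B'\sqrt{x}\Rightarrow x\lesssim A+(B')^2$), all harmlessly absorbed into $\nu(p)$.
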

\begin{proof}
If $n< \bigl(\frac{8p}{p-2}\bigr)^{8p/(p-2)}$ then
a crude deterministic bound for $f(n,[N])$ gives
$$f(n,[N])\leq n\max\limits_{i\leq N}\|X_i\|^2\leq
\Bigl(\frac{8p}{p-2}\Bigr)^{8p/(p-2)}\max\limits_{i\leq N}\|X_i\|^2,$$
and we obtain the statement.

Otherwise, we have
\begin{equation}\label{eq: aux n p}
n\geq \Bigl(\frac{8p}{p-2}\Bigr)^{8p/(p-2)},
\end{equation}
and both $n$ and $N$ satisfy the assumptions of
Proposition~\ref{p: prob w}. Define
$$H:=(BN)^{1/p}n^{1/2-1/p}/\log (n)$$
and let
$\chi:=\bigl\lceil\frac{8+2p}{p-2}\bigr\rceil$.
Then, by Lemma~\ref{l: g counting} and the definition of $H$, we have
\begin{align}
\chi^{-1}\sum\limits_{m=1}^\chi f(n,\ColorC_m^H)
\leq
\max\limits_{i\leq N}\|X_i\|^2
&+C_{\text{\tiny\ref{l: g counting}}}B^{1/p} \log^2\frac{N}{n}\,\sqrt{n}\Bigl(\frac{N}{n}\Bigr)^{1/p}\,
\max\limits_{i\leq N}\|X_i\|
\nonumber\\
&+\frac{C_{\text{\tiny\ref{l: g counting}}} pB^{1/p}}{p-2}\log^2\frac{N}{n}\,\sqrt{n}
\Bigl(\frac{N}{n}\Bigr)^{1/p}\sqrt{f(n,[N])}\label{eq: aux gb}
\end{align}
with probability at least $1-\frac{\chi}{N^2}$.
Now, recall that $\ColorC_m^H=\emptyset$ for all $m>\chi(\Graph_H)$,
where $\Graph_H$ is the graph defined in Section~\ref{s: coloring}, and $\chi(\Graph_H)$
is its chromatic number. By Proposition~\ref{p: chrom},
we have $\chi(\Graph_H)\leq \chi$ with probability at least $1-(BNH^{-p})^\chi n^{p/2}$.
Note that, by the assumption \eqref{eq: aux n p} on $n$, we have
$$n^{1/2-1/p}/\log(n)\geq n^{1/4-1/(2p)},$$
whence
$$(BNH^{-p})^{\chi}=\bigl(n^{p/2-1}/\log^p(n)\bigr)^{-\chi}\leq n^{\chi/2-p\chi/4}\leq \frac{1}{n^{2+p/2}}.$$
Thus, $\chi(\Graph_H)\leq \chi$ with probability at least $1-\frac{1}{n^2}$.
Note that by the definition of $f(n,\ColorC)$ and the partition $\{\ColorC_m^H\}_{m=1}^N$,
and by the Cauchy--Schwarz inequality, we have
$$f(n,[N])\leq \sum\limits_{m=1}^{\chi(\Graph_H)} f(n,\ColorC_m^H)$$
deterministically.
Therefore, in view of the above estimate of the chromatic number, we have
$$f(n,[N])\leq \sum\limits_{m=1}^\chi f(n,\ColorC_m^H)$$
with probability at least $1-\frac{1}{n^2}$. Together with the probability bound for \eqref{eq: aux gb}, it yields
\begin{align*}
\chi^{-1}f(n,[N])\leq  \max\limits_{i\leq N}\|X_i\|^2
&+C_{\text{\tiny\ref{l: g counting}}}B^{1/p} \log^2\frac{N}{n}\,\sqrt{n}\Bigl(\frac{N}{n}\Bigr)^{1/p}\,\max\limits_{i\leq N}\|X_i\|\\
&+\frac{C_{\text{\tiny\ref{l: g counting}}} pB^{1/p}}{p-2}\log^2\frac{N}{n}\,\sqrt{n}
\Bigl(\frac{N}{n}\Bigr)^{1/p}\sqrt{f(n,[N])}
\end{align*}
with probability at least $1-\frac{\chi}{N^2}-\frac{1}{n^2}$.
Solving the inequality, we obtain the result.

\end{proof}

\section{Proof of Theorem~\ref{t: main}}\label{s: proof}

The contents of this section is to a large extent based on arguments from papers \cite{ALPT10,MP14,GLPT15}.
Let us emphasize that the new ingredients --- the Sparsifying Lemma~\ref{l: s-r}
and the coloring of the sample from Section~\ref{s: coloring} --- were employed to
bound the quantity $f(n,[N])$, whereas transition from those bounds to estimating
the extreme singular values of the sample covariance matrix is well understood and covered in literature.
Nevertheless, we prefer to include all the proofs for completeness.

We start with estimating the Euclidean norm of a tail of a random vector with independent coordinates.
\begin{lemma}\label{l: tail bound}
Let $n,N\in\N$ with $n\leq N$, and let $Y=(Y_1,Y_2,\dots,Y_N)$ be a vector of independent
non-negative random variables such that $\Exp {Y_i}^q\leq B$ ($i=1,2,\dots,N$) for
some $q>1$ and $B\geq 1$. Then
$$\biggl(\sum\limits_{i=n+1}^N \kmax{i}{[N]}{Y_\ell}^2\biggr)^{1/2}
\leq C_{\text{\tiny\ref{l: tail bound}}}B^{1/q}\sqrt{n}\Bigl(\frac{N}{n}\Bigr)^{1/\min(q,2)}$$
with probability at least $1-\exp(-3n)$. Here, $C_{\text{\tiny\ref{l: tail bound}}}>0$ is a sufficiently
large universal constant.
\end{lemma}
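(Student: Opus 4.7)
The plan is to truncate each $Y_i$ above a carefully chosen threshold so that every index with $Y_i>M$ is absorbed by the discarded top $n$ order statistics, and then to apply a Bernstein-type concentration bound to the resulting truncated sum of squares.

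First I would set $M:=(CBN/n)^{1/q}$ for a sufficiently large universal constant $C>0$. By Markov's inequality applied to $Y_i^q$, we have $\Prob(Y_i>M)\leq B/M^q=n/(CN)$, so the indicators $\mathbf{1}_{Y_i>M}$ are independent Bernoulli variables of total mean at most $n/C$. A standard Chernoff bound for binomial sums then shows that, once $C$ is large enough, the event
$$E:=\bigl\{\bigl|\{i\leq N:\,Y_i>M\}\bigr|\leq n\bigr\}$$
has probability at least $1-\exp(-4n)$.

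On the event $E$, every index $i$ with $Y_i>M$ belongs to the top $n$ order statistics of $(Y_i)$, so $\kmax{i}{[N]}Y_\ell\leq M$ for every $i>n$, and hence
$$\sum_{i=n+1}^N\bigl(\kmax{i}{[N]}Y_\ell\bigr)^2\leq \sum_{i=1}^N Y_i^2\mathbf{1}_{Y_i\leq M}.$$
To bound the expectation of the right-hand side I would split into cases. For $q\geq 2$, Jensen's inequality gives $\Exp Y_i^2\leq B^{2/q}$, so the total expectation is at most $NB^{2/q}$. For $1<q<2$, the layer-cake identity together with the tail estimate $\Prob(Y_i>t)\leq Bt^{-q}$ yields $\Exp Y_i^2\mathbf{1}_{Y_i\leq M}\leq C'BM^{2-q}/(2-q)$, and substituting the definition of $M$ produces a bound of the form $C''nB^{2/q}(N/n)^{2/q}$. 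In both cases the expected value is of the same order as the square of the target right-hand side.

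To pass from expectation to a probability bound I would apply Bernstein's inequality to the independent non-negative variables $\xi_i:=Y_i^2\mathbf{1}_{Y_i\leq M}$, which are uniformly bounded by $M^2$ and whose variances satisfy $\Var\xi_i\leq M^2\Exp\xi_i$. For $\beta\geq 1$ this yields $\Prob(\sum_i\xi_i>(1+\beta)\Exp\sum_i\xi_i)\leq\exp(-c\beta\Exp\sum_i\xi_i/M^2)$, and a short computation using $M^q\asymp BN/n$ shows $\Exp\sum_i\xi_i/M^2\geq c'n$ uniformly over $q>1$. Choosing $\beta$ a sufficiently large universal constant and combining with the bound on $E$ by a union bound yields the claimed probability $1-\exp(-3n)$.

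The main obstacle I expect is calibrating the universal constants so that the Chernoff step and the Bernstein step simultaneously produce exponents of order $n$, and in particular that the argument does not degenerate at the borderline $q=2$. The reason to prefer the truncation route over a direct dyadic iteration of Lemma~\ref{l: os est} is precisely that the latter would leak a spurious factor $\log(N/n)$ at $q=2$, since at that value all dyadic levels of the sum $\sum m_j\tau_j^2$ contribute equally. Truncation avoids this because the Jensen estimate $\Exp Y_i^2\leq B^{2/q}$ already captures the correct second moment with no logarithmic cost.
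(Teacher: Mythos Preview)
Your strategy matches the paper's: truncate at $M\asymp(BN/n)^{1/q}$, show at most $n$ variables exceed $M$ with probability $\geq 1-\exp(-4n)$, and then control $\sum_i\min(Y_i,M)^2$ by an exponential-moment argument (the paper uses a direct Laplace transform with $\lambda=1/M^2$ rather than Bernstein, but that is the same computation).

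Two points in your sketch need repair. First, the claimed lower bound $\Exp\sum_i\xi_i/M^2\geq c'n$ is false in general: there is no lower moment hypothesis on the $Y_i$, so $\mu:=\Exp\sum_i\xi_i$ could be zero. What you actually need, and what is true, is that the \emph{target} level $T:=C_1B^{2/q}n(N/n)^{2/\min(q,2)}$ satisfies $T/M^2\geq c'n$ (a direct check in each regime); then Bernstein with deviation $t=T-\mu\geq T/2$ gives an exponent of order $T/M^2\geq c'n$, regardless of how small $\mu$ is. Second, your layer-cake estimate $\Exp\xi_i\leq C'BM^{2-q}/(2-q)$ for $q<2$ blows up as $q\to 2^-$, which is exactly the degeneracy you single out as the main obstacle. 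The paper sidesteps this with the cruder but uniform bound $\Exp\,Y_i^2\mathbf{1}_{Y_i\leq M}\leq M^{2-q}\Exp Y_i^q\leq BM^{2-q}$, which carries no factor $(2-q)^{-1}$ and keeps the final constant universal.
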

\begin{proof}
Set $M:=\bigl(\frac{e^5BN}{n}\bigr)^{1/q}$.
By Markov's inequality,
\begin{align*}
\Prob\bigl\{|\{i\leq N:\,Y_i\geq M\}|> n\bigr\}
&=\Prob\bigl\{|\{i\leq N:\,{Y_i}^q\geq e^5BN/n\}|> n\bigr\}\\
&\leq{N\choose n}\biggl(\frac{n}{e^5N}\biggr)^n\\
&\leq\exp(-4n).
\end{align*}
We define $\widetilde Y=(\widetilde Y_1,\widetilde Y_2,\dots,\widetilde Y_N)$ as a vector of truncations of $Y_i$'s:
for every point $\omega$ of the probability space, we set
$$\widetilde Y_i(\omega):=\begin{cases}Y_i(\omega),&\mbox{if }Y_i(\omega)\leq M;\\M,&\mbox{otherwise}.\end{cases}$$
Then, from the above estimate,
\begin{align*}
\Prob\Bigl\{
\Bigl(\sum\limits_{i=n+1}^N \kmax{i}{[N]}{\widetilde Y_\ell}^2\Bigr)^{1/2}<
\Bigl(\sum\limits_{i=n+1}^N \kmax{i}{[N]}{Y_\ell}^2\Bigr)^{1/2}\Bigr\}
&=\Prob\bigl\{|\{i\leq N:\,Y_i> M\}|> n\bigr\}\\
&\leq\exp(-4n).
\end{align*}
Now, we estimate the Euclidean norm of $\widetilde Y$ using the Laplace transform.
We set $\lambda:=\frac{1}{M^2}=\bigl(\frac{n}{e^5BN}\bigr)^{2/q}$. We have
\begin{align*}
\Exp\exp(\lambda\|\widetilde Y\|^2)&=\prod\limits_{i=1}^N\Exp\exp(\lambda {\widetilde Y_i}^2)\\
&=\prod\limits_{i=1}^N\Bigl(1+\int_1^{\exp(\lambda M^2)}\Prob\bigl\{\exp(\lambda {\widetilde Y_i}^2)\geq\tau\bigr\}\,d\tau\Bigr)\\
&\leq\prod\limits_{i=1}^N\Bigl(1+\int_1^{e}\Prob\Bigl\{{\widetilde Y_i}^2\geq\frac{\tau-1}{e\lambda}\Bigr\}\,d\tau\Bigr)\\
&\leq\prod\limits_{i=1}^N\Bigl(1+e\lambda\int_0^{(e-1)/(e\lambda)}\Prob\bigl\{{\widetilde Y_i}^2\geq u\bigr\}\,du\Bigr)\\
&\leq \prod\limits_{i=1}^N\bigl(1+e\lambda\Exp{\widetilde Y_i}^2\bigr).
\end{align*}
First, assume that $q\geq 2$. Then $\Exp {\widetilde Y_i}^2\leq B^{2/q}$, and we get
$$\Exp\exp(\lambda\|\widetilde Y\|^2)
\leq\bigl(1+eB^{2/q}\lambda\bigr)^N\leq \exp(eB^{2/q}\lambda N).
$$
Otherwise, if $q<2$ then $\Exp{\widetilde Y_i}^2\leq M^{2-q}\Exp {\widetilde Y_i}^q\leq B\lambda^{q/2-1}$,
whence
$$
\Exp\exp(\lambda\|\widetilde Y\|^2)
\leq\bigl(1+eB\lambda^{q/2}\bigr)^N\leq \exp(eB\lambda^{q/2} N).
$$
Thus, denoting $r:=\min(q,2)$, we get
$$\Exp\exp(\lambda\|\widetilde Y\|^2)\leq\exp\bigl(eB^{2/q-2/r+1}\lambda^{r/2}N\bigr).$$
Hence, by Markov's inequality,
\begin{align*}
\Prob\bigl\{\|\widetilde Y\|\geq e^6B^{1/q}N^{1/r}n^{1/2-1/r}\bigr\}
&\leq \exp\bigl(eB^{2/q-2/r+1}\lambda^{r/2}N-e^{12}B^{2/q}\lambda N^{2/r}n^{1-2/r}\bigr)\\
&\leq\exp\bigl(-(e^7-e)B^{r/q}\lambda^{r/2}N\bigr)\\
&\leq\exp(-4n).
\end{align*}
Finally, we get
\begin{align*}
&\Prob\Bigl\{\Bigl(\sum\limits_{i=n+1}^N \kmax{i}{[N]}{Y_\ell}^2\Bigr)^{1/2}>e^6B^{1/q}N^{1/r}n^{1/2-1/r}\Bigr\}\\
&\leq\Prob\Bigl\{
\Bigl(\sum\limits_{i=n+1}^N \kmax{i}{[N]}{\widetilde Y_\ell}^2\Bigr)^{1/2}<
\Bigl(\sum\limits_{i=n+1}^N \kmax{i}{[N]}{Y_\ell}^2\Bigr)^{1/2}\Bigr\}
+\Prob\bigl\{\|\widetilde Y\|> e^6B^{1/q}N^{1/r}n^{1/2-1/r}\bigr\}\\
&\leq 2\exp(-4n)\\
&\leq\exp(-3n).
\end{align*}
\end{proof}
\begin{rem}
The above lemma is similar, but not identical to \cite[Lemma~4.4]{GLPT15}, which was proved under slightly
different assumptions, and using different arguments.
\end{rem}

\begin{prop}\label{p: rademacher}
Let $X_1,X_2,\dots,X_N$ be i.i.d.\ centered $n$-dimensional isotropic random vectors,
and assume that for some $p>2$ and $B\geq 1$ we have
$$\Exp|\langle X_i,a\rangle|^p\leq B$$
for all $a\in\Sph^{n-1}$. Further, let $r_1,r_2,\dots,r_N$ be Rademacher ($\pm1$) random variables
jointly independent with $X_1,X_2\dots,X_N$. Then
$$\sup\limits_{a\in\Sph^{n-1}}\Bigl|\sum\limits_{i=1}^N r_i \langle X_i,a\rangle^2\Bigr|
\leq C_{\text{\tiny\ref{p: rademacher}}}B^{2/p}n\Bigl(\frac{N}{n}\Bigr)^{2/\min(p,4)}+2f(n,[N])$$
with probability at least $1-2^{-n}$. Here,
$f$ is defined by \eqref{eq: f def}, and
$C_{\text{\tiny\ref{p: rademacher}}}>0$ is a universal constant. 
\end{prop}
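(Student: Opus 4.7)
The plan is a standard symmetrization-style net argument, combined with the tail bound of Lemma~\ref{l: tail bound} and a deterministic reduction to $f(n,[N])$.

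\textbf{Step 1: Reduction to a net and splitting the sum.} The random quadratic form $a\mapsto \sum_{i=1}^N r_i \langle X_i,a\rangle^2=\langle M a,a\rangle$ is induced by the symmetric matrix $M=\sum_{i=1}^N r_i X_i{X_i}^T$. Thus, by Lemma~\ref{l: quadratic net} applied with $\rho=1/4$, it suffices to control the supremum over a Euclidean $(1/4)$-net $\Net\subset\Sph^{n-1}$ of cardinality $|\Net|\leq 12^n$, losing at most a factor of $2$. Now fix $a\in\Net$ and write $y_i:=\langle X_i,a\rangle$. Let $S_a\subset[N]$ denote the set of indices corresponding to the $n$ largest values of $|y_i|$, so that $|S_a|=n$. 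Decompose
\begin{align*}
\Bigl|\sum_{i=1}^N r_i y_i^2\Bigr|
&\leq \sum_{i\in S_a}y_i^2+\Bigl|\sum_{i\notin S_a}r_i y_i^2\Bigr|.
\end{align*}
Because $\sum_{i\in S_a} y_i^2$ equals the sum of the $n$ largest values among $\langle X_i,a\rangle^2$, the definition~\eqref{eq: f def} of $f(n,[N])$ (in its dual form) immediately gives $\sum_{i\in S_a}y_i^2\leq f(n,[N])$ deterministically.

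\textbf{Step 2: Tail control via Lemma~\ref{l: tail bound}.} Apply Lemma~\ref{l: tail bound} to the non-negative independent variables $Y_i:=\langle X_i,a\rangle^2$ with $q:=p/2>1$ and moment bound $\Exp Y_i^{q}=\Exp|\langle X_i,a\rangle|^p\leq B$. The conclusion gives
$$\Bigl(\sum_{i\notin S_a}y_i^4\Bigr)^{1/2}\leq C_{\text{\tiny\ref{l: tail bound}}}\,B^{2/p}\sqrt{n}\,\bigl(N/n\bigr)^{2/\min(p,4)}$$
with probability at least $1-\exp(-3n)$, using $1/\min(q,2)=2/\min(p,4)$. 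Taking the union bound over $a\in\Net$, the event
\begin{align*}
\Omega_1:=\Bigl\{\forall a\in\Net:\;\sum_{i\notin S_a}y_i^4\leq C^2B^{4/p}n\bigl(N/n\bigr)^{4/\min(p,4)}\Bigr\}
\end{align*}
has probability at least $1-12^n\exp(-3n)$.

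\textbf{Step 3: Conditional Hoeffding over the net.} Condition on $X_1,\ldots,X_N$. For each fixed $a\in\Net$, the random variable $\sum_{i\notin S_a}r_i y_i^2$ is a Rademacher sum with (conditionally constant) coefficients $y_i^2$, $i\notin S_a$. Applying Hoeffding's inequality (Lemma~\ref{l: hoeff}) and a union bound over $\Net$ yields, on the event $\Omega_1$,
\begin{align*}
\sup_{a\in\Net}\Bigl|\sum_{i\notin S_a}r_i y_i^2\Bigr|
\leq C'\sqrt{n}\,\Bigl(\sup_{a\in\Net}\sum_{i\notin S_a}y_i^4\Bigr)^{1/2}
\leq C''B^{2/p}\,n\,\bigl(N/n\bigr)^{2/\min(p,4)}
\end{align*}
with conditional probability at least $1-2\cdot 12^n\exp(-c n)$ for suitable absolute constants (here the factor $\sqrt{n}$ absorbs $\sqrt{\log|\Net|}$). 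Choosing constants so that both $12^n\exp(-3n)$ and $12^n\exp(-cn)$ are $\leq 2^{-n-1}$ (which may require a coarser net or a slightly larger multiplicative constant $C_{\text{\tiny\ref{p: rademacher}}}$; this is routine), the combined event holds with probability at least $1-2^{-n}$.

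\textbf{Step 4: Transfer to the sphere.} Combining Steps~1--3, on the good event we have $\sup_{a\in\Net}|\langle Ma,a\rangle|\leq f(n,[N])+C''B^{2/p}n(N/n)^{2/\min(p,4)}$, and Lemma~\ref{l: quadratic net} upgrades this to
$$\sup_{a\in\Sph^{n-1}}\Bigl|\sum_{i=1}^N r_i\langle X_i,a\rangle^2\Bigr|\leq 2f(n,[N])+2C''B^{2/p}n\bigl(N/n\bigr)^{2/\min(p,4)},$$
which is the claimed bound (absorbing the constant into $C_{\text{\tiny\ref{p: rademacher}}}$). The only mildly delicate point is the bookkeeping of the two union bounds against $12^n$ so that the final probability is at least $1-2^{-n}$; the case split $p\leq 4$ vs $p>4$ inside the exponent $2/\min(p,4)$ is handled automatically by Lemma~\ref{l: tail bound}, which is the reason for introducing that lemma in the previous subsection.
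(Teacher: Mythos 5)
Your argument is correct and is essentially the paper's own proof: the same $1/4$-net reduction via Lemma~\ref{l: quadratic net}, the same split of each Rademacher sum into the $n$ largest terms (bounded deterministically by $f(n,[N])$, exactly the duality the paper uses via the sorting permutation $\sigma_a$) plus a tail handled by conditional Hoeffding together with Lemma~\ref{l: tail bound}, and a union bound over the net. The only point to tidy is the final bookkeeping you already flagged: with the $12^n$ net count one has $12^ne^{-3n}>2^{-n}$, but using the standard cardinality bound $9^n$ for a $1/4$-net (as the paper does) the same computation gives $9^n\bigl(2e^{-8n}+e^{-3n}\bigr)\leq 9^n\cdot 18^{-n}=2^{-n}$.
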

\begin{proof}
Define a random operator $T:\R^n\to\R^n$ by
$$Ta:=\sum\limits_{i=1}^N r_i \langle X_i,a\rangle X_i,\;\;a\in\Sph^{n-1}.$$
Let $\Net$ be a Euclidean $1/4$-net on $\Sph^{n-1}$ of cardinality at most $9^n$.
Then, applying Lemma~\ref{l: quadratic net}, we obtain
$$\sup\limits_{a\in\Sph^{n-1}}|\langle Ta,a\rangle|
\leq 2\sup\limits_{a\in\Net}|\langle Ta,a\rangle|=2\sup\limits_{a\in\Net}\Bigl|\sum\limits_{i=1}^N r_i \langle X_i,a\rangle^2\Bigr|.$$
Next, for every $a\in\Sph^{n-1}$ let $\sigma_a$
be a random permutation on $[N]$ measurable with respect to the $\sigma$-algebra
generated by $X_1,X_2,\dots,X_N$, such that
$$\langle X_{\sigma_a(1)},a\rangle^2\geq \langle X_{\sigma_a(2)},a\rangle^2
\geq\dots\geq \langle X_{\sigma_a(N)},a\rangle^2.$$
Thus, $\kmax{i}{[N]}\langle X_\ell,a\rangle^2=\langle X_{\sigma_a(i)},a\rangle^2$
for all $i=1,2,\dots,N$. We have
\begin{align*}
\sup\limits_{a\in\Net}\Bigl|\sum\limits_{i=1}^N r_i \langle X_i,a\rangle^2\Bigr|
&\leq \sup\limits_{a\in\Sph^{n-1}}\sum\limits_{i=1}^n \kmax{i}{[N]}\langle X_\ell,a\rangle^2
+\sup\limits_{a\in\Net}\Bigl|\sum\limits_{i=n+1}^N r_{\sigma_a(i)} \langle X_{\sigma_a(i)},a\rangle^2\Bigr|\\
&=f(n,[N])+\sup\limits_{a\in\Net}\Bigl|\sum\limits_{i=n+1}^N r_{\sigma_a(i)} \langle X_{\sigma_a(i)},a\rangle^2\Bigr|.
\end{align*}
In view of the fact that $\sigma_a$ is independent from $r_1,r_2,\dots,r_N$, it remains to prove that
$$\sup\limits_{a\in\Net}\Bigl|\sum\limits_{i=n+1}^N r_i\,\kmax{i}{[N]}\langle X_\ell,a\rangle^2\Bigr|\leq
CB^{2/p}n\Bigl(\frac{N}{n}\Bigr)^{2/\min(p,4)}$$
with probability at least $1-2^{-n}$ for a sufficiently large universal constant $C>0$.
Fix for a moment $a\in\Net$ and define a random vector $Z^a\in\R^{[N]\setminus [n]}$ by
$$Z^a_i:=\kmax{i}{[N]}\langle X_\ell,a\rangle^2,\;\;i=n+1,\dots,N.$$
Note that $Z^a$ and $r_1,r_2,\dots,r_N$ are jointly independent.
Applying Hoeffding's inequality, we get
$$\Bigl|\sum\limits_{i=n+1}^N r_i Z_i^a\Bigr|\leq 4\sqrt{n}\|Z^a\|$$
with probability at least $1-2\exp(-8n)$ (see Lemma~\ref{l: hoeff}).
At the same time, applying Lemma~\ref{l: tail bound} with $Y_i:=\langle X_i,a\rangle^2$ ($i=1,2,\dots,N$)
and $q:=p/2$, we get
$$\Prob\Bigl\{\|Z^a\|> C_{\text{\tiny\ref{l: tail bound}}}B^{2/p}\sqrt{n}\Bigl(\frac{N}{n}\Bigr)^{2/\min(p,4)}\Bigr\}
\leq \exp(-3n).$$
Combining the last two estimates, we obtain
\begin{align*}
\Prob\Bigl\{\Bigl|\sum\limits_{i=n+1}^N r_i \kmax{i}{[N]}\langle X_\ell,a\rangle^2\Bigr|>
4C_{\text{\tiny\ref{l: tail bound}}}B^{2/p}n\Bigl(\frac{N}{n}\Bigr)^{2/\min(p,4)}\Bigr\}
&\leq 2\exp(-8n)+\exp(-3n)\\
&\leq 18^{-n}.
\end{align*}
Taking the union bound over all $a\in\Net$,
we obtain the desired result.
\end{proof}

In the next statement, we combine Proposition~\ref{p: rademacher}
with a standard symmetrization argument. Let us note once more that at this point
our proof essentially follows the argument of \cite{MP14} and \cite{GLPT15}.

\begin{prop}\label{p: deviation}
Let $X_1,X_2,\dots,X_N$ be i.i.d.\ centered isotropic random vectors,
such that for some $p> 2$ and $B\geq 1$ we have
$$\sup\limits_{a\in\Sph^{n-1}}\Exp|\langle X_i,a\rangle|^{p}\leq B.$$
Then
$$\sup\limits_{a\in\Sph^{n-1}}\Bigl|\sum\limits_{i=1}^N \langle X_i,a\rangle^2-N\Bigr|
\leq C_{\text{\tiny\ref{p: deviation}}}B^{2/p}n\Bigl(\frac{N}{n}\Bigr)^{2/\min(p,4)}+4f(n,[N])$$
with probability at least $1-2^{2-n}$. Here, $C_{\text{\tiny\ref{p: deviation}}}>0$ is a universal constant.
\end{prop}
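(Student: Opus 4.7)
The plan is to deduce Proposition~\ref{p: deviation} from Proposition~\ref{p: rademacher} via the classical symmetrization/desymmetrization argument. By isotropy, $\Exp\langle X_i,a\rangle^2=1$ for every $a\in\Sph^{n-1}$, so the quantity to be bounded is precisely the centered empirical process $\sup_{a}|S(a)|$ with $S(a):=\sum_{i=1}^N(\langle X_i,a\rangle^2-1)$.

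I would introduce an independent copy $X_1',\dots,X_N'$ of the sample and independent Rademacher signs $r_1,\dots,r_N$, and set $S'(a):=\sum_i(\langle X_i',a\rangle^2-1)$, then proceed in three steps. Step~(i): by the standard desymmetrization trick, since $S$ and $S'$ are iid and independent, for any constant $M$ with $\Prob\{\sup_a|S'(a)|\leq M\}\geq 1/2$ (e.g.\ a median of $\sup_a|S|$),
\[
\Prob\bigl\{\sup_a|S(a)|>M+s\bigr\}\leq 2\,\Prob\bigl\{\sup_a|S(a)-S'(a)|>s\bigr\}\quad\text{for all }s>0.
\]
Step~(ii): since $\langle X_i,a\rangle^2-\langle X_i',a\rangle^2$ flips sign under the swap $X_i\leftrightarrow X_i'$, the process $\bigl(\sum_i(\langle X_i,a\rangle^2-\langle X_i',a\rangle^2)\bigr)_a$ has the same joint distribution as its Rademacher-randomized version $\bigl(\sum_i r_i(\langle X_i,a\rangle^2-\langle X_i',a\rangle^2)\bigr)_a$; the triangle inequality then yields, in distribution, $\sup_a|S(a)-S'(a)|\leq Q+Q'$, where $Q:=\sup_a|\sum_i r_i\langle X_i,a\rangle^2|$ and $Q'$ is its analogue for $X'$ (with the same $r$). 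Step~(iii): two applications of Proposition~\ref{p: rademacher} (to $X$ and to $X'$) together with a union bound give
\[
Q+Q'\leq 2E+2\,f(n,[N])+2\,f'(n,[N])
\]
with probability at least $1-2\cdot 2^{-n}$, where $E:=C_0 B^{2/p}n(N/n)^{2/\min(p,4)}$ and $f'(n,[N])$ denotes the quantity $f$ evaluated on the copy $X'$. Combining the factor $2$ from step~(i) with the $2\cdot 2^{-n}$ failure from step~(iii) produces the target total failure probability $4\cdot 2^{-n}=2^{2-n}$.

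The main obstacle is cleanly separating the $X$- and $X'$-dependence: the symmetrization output still contains $f'(n,[N])$ and the median $M$, whereas the statement should only involve $f(n,[N])$ and the deterministic quantity $E$. The hard part will be to dispose of $f'(n,[N])$ by exploiting the iid symmetry between $X$ and $X'$, effectively replacing $2f+2f'$ by $4f$ --- this is what explains the coefficient $4$ (rather than $2$) in the final bound, as well as the factor $2$ inflating the probability. The median $M$ must similarly be absorbed into the $E$-term, which can be arranged via a preliminary crude estimate on $\sup_a|S(a)|$ or by bootstrapping the main estimate, all while tracking probability costs carefully so that the failure probability remains within the target $1-2^{2-n}$.
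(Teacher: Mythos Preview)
Your plan is essentially the paper's own argument: introduce an independent copy $X'$ and Rademacher signs $r$, pass from $\sup_a|\xi_a-\xi_a'|$ to its sign-randomized version, split by the triangle inequality, invoke Proposition~\ref{p: rademacher}, and finish with the desymmetrization inequality
\[
\Prob\Bigl\{\sup_a|\xi_a|\geq t+M\Bigr\}\leq 2\,\Prob\Bigl\{\sup_a|\xi_a-\xi_a'|\geq t\Bigr\},\qquad M=\sup_{a\in\Sph^{n-1}}\Med|\xi_a'|.
\]
Two remarks on the points you flagged as obstacles.

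\emph{The median $M$.} No bootstrapping is needed. The paper bounds $\Exp|\xi_a|$ directly: by symmetrization in expectation and the Minkowski inequality,
\[
\Exp|\xi_a|\;\leq\;2\,\Exp\Bigl|\sum_{i}r_i\langle X_i,a\rangle^2\Bigr|\;\leq\;2\,\Exp\Bigl(\sum_i\langle X_i,a\rangle^4\Bigr)^{1/2}
\;\leq\;2\Bigl(\sum_i\Exp|\langle X_i,a\rangle|^{\min(p,4)}\Bigr)^{2/\min(p,4)}\leq 2B^{2/p}N^{2/\min(p,4)},
\]
so $M\leq 4B^{2/p}N^{2/\min(p,4)}$, which is dominated by the term $C B^{2/p}n(N/n)^{2/\min(p,4)}$ and simply absorbed into $C_{\text{\tiny\ref{p: deviation}}}$.

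\emph{The $f$ vs.\ $f'$ issue.} The paper does not keep $f$ and $f'$ separate as you do. It writes the one-line tail symmetrization
\[
\Prob\Bigl\{\sup_a|\xi_a-\xi_a'|\geq t\Bigr\}\leq 2\,\Prob\Bigl\{\sup_a\Bigl|\sum_i r_i\langle X_i,a\rangle^2\Bigr|\geq t/2\Bigr\},
\]
whose right-hand side involves only $(X,r)$, and then substitutes the $X$-measurable threshold $t=2E+4f(n,[N])$ before applying Proposition~\ref{p: rademacher}. So the device is to route everything through $Q=\sup_a|\sum_i r_i\langle X_i,a\rangle^2|$ rather than through $Q+Q'$; this is exactly what produces the coefficient $4$ in front of $f$ and the factor $2^{2-n}$. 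Your worry about justifying the displayed inequality for a \emph{random} threshold $t=t(X)$ is legitimate --- the paper simply asserts the resulting bound without spelling this out --- but that is the step you should focus on, not an exchange-of-$f$-and-$f'$ argument on the $Q+Q'$ side.
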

\begin{proof}
Let $X_1',X_2',\dots,X_N'$ be jointly independent copies of $X_1,X_2,\dots,X_N$,
and for every $a\in\Sph^{n-1}$ denote
$$\xi_a:=\sum\limits_{i=1}^N\langle X_i,a\rangle^2-N;\;\;\xi_a':=\sum\limits_{i=1}^N\langle X_i',a\rangle^2-N.$$
Then the variable
$$\sup\limits_{a\in\Sph^{n-1}}|\xi_a-\xi_a'|=\sup\limits_{a\in\Sph^{n-1}}\Bigl|
\sum\limits_{i=1}^N\bigl(\langle X_i,a\rangle^2-\langle X_i',a\rangle^2\bigr)\Bigr|$$
has the same distribution as
$$\sup\limits_{a\in\Sph^{n-1}}\Bigl|
\sum\limits_{i=1}^N r_i\bigl(\langle X_i,a\rangle^2-\langle X_i',a\rangle^2\bigr)\Bigr|,$$
where $r_1,r_2,\dots,r_N$ are Rademacher random variables jointly independent with the vectors $X_i,X_i'$.
Hence, for any $t\geq 0$ we have
$$\Prob\bigl\{\sup\limits_{a\in\Sph^{n-1}}|\xi_a-\xi_a'|\geq t\bigr\}
\leq 2\Prob\Bigl\{\sup\limits_{a\in\Sph^{n-1}}\Bigl|\sum\limits_{i=1}^N r_i\langle X_i,a\rangle^2\Bigr|\geq\frac{t}{2}\Bigr\}.$$
Applying Proposition~\ref{p: rademacher},
we obtain
\begin{equation}\label{eq: aux 9472}
\Prob\Bigl\{\sup\limits_{a\in\Sph^{n-1}}|\xi_a-\xi_a'|>
2C_{\text{\tiny\ref{p: rademacher}}}B^{2/p}n\Bigl(\frac{N}{n}\Bigr)^{2/\min(p,4)}+4f(n,[N])\Bigr\}
\leq 2^{1-n}.
\end{equation}
Finally, note that for any $a\in\Sph^{n-1}$ we have
\begin{align*}
\Exp|\xi_a|&\leq \Exp|\xi_a-\xi_a'|\\
&=\Exp\Bigl|\sum\limits_{i=1}^N r_i\bigl(\langle X_i,a\rangle^2-\langle X_i',a\rangle^2\bigr)\Bigr|\\
&\leq 2\Exp\Bigl|\sum\limits_{i=1}^N r_i\langle X_i,a\rangle^2\Bigr|\\
&\leq 2\Exp\Bigl(\sum\limits_{i=1}^N \langle X_i,a\rangle^4\Bigr)^{1/2},
\end{align*}
whence, by the Minkowski inequality,
\begin{align*}
\Exp|\xi_a|&\leq 2\Exp\Bigl(\sum\limits_{i=1}^N |\langle X_i,a\rangle|^{\min(p,4)}\Bigr)^{2/\min(p,4)}\\
&\leq 2\Bigl(\sum\limits_{i=1}^N \Exp|\langle X_i,a\rangle|^{\min(p,4)}\Bigr)^{2/\min(p,4)}\\
&\leq 2B^{2/p}N^{2/\min(p,4)}.
\end{align*}
Therefore, by Markov's inequality
$$\Med|\xi_a|\leq 4B^{2/p}N^{2/\min(p,4)},\;\;a\in\Sph^{n-1}.$$
Combining this with a standard estimate
$$\Prob\bigl\{\sup\limits_{a\in\Sph^{n-1}}|\xi_a|\geq t+M\bigr\}
\leq 2\Prob\bigl\{\sup\limits_{a\in\Sph^{n-1}}|\xi_a-\xi_a'|\geq t\bigr\},\;\;t>0,$$
where $M:=\sup\limits_{a\in\Sph^{n-1}}\Med|\xi_a'|$, and with \eqref{eq: aux 9472},
we obtain the result.
\end{proof}

\begin{proof}[Proof of Theorem~\ref{t: main}]
Let $X$ be a centered isotropic random vector in $\R^n$,
such that
$$\sup\limits_{a\in\Sph^{n-1}}\Exp|\langle X,a\rangle|^p\leq B$$
for some $p>2$ and $B\geq 1$. Also, let $X_1,X_2,\dots,X_N$ be its independent copies.
As before, we denote by $\CovM_N$ the sample covariance matrix for $X_1,X_2,\dots,X_N$,
and by $A_N$ --- the $N\times n$ random matrix with rows $X_1,X_2,\dots,X_N$.
If $N$ is bounded by a function of $p$, we can apply a trivial estimate to get the result.
So, further we assume that  
$$N>2\exp\bigl(3C_{\text{\tiny\ref{p: prob w}}}\max(1,1/(p-2))\bigr).$$

First, suppose that $\log\frac{N}{n}\geq C_{\text{\tiny\ref{p: prob w}}}\max\bigl(1,1/(p-2)\bigr)$.
We have
\begin{align*}
\|\CovM_N-\Id_n\|_{2\to 2}&=\sup\limits_{a\in\Sph^{n-1}}|\langle\CovM_N a-a,a\rangle|\\
&=N^{-1}\sup\limits_{a\in\Sph^{n-1}}|\langle {A_N}^T A_N a,a\rangle-N|\\
&=N^{-1}\sup\limits_{a\in\Sph^{n-1}}\Bigl|\sum\limits_{i=1}^N \langle X_i,a\rangle^2-N\Bigr|.
\end{align*}
Hence, by Proposition~\ref{p: deviation}, we have
$$\|\CovM_N-\Id_n\|_{2\to 2}\leq
C_{\text{\tiny\ref{p: deviation}}}B^{2/p}n\Bigl(\frac{N}{n}\Bigr)^{2/\min(p,4)}+4f(n,[N])$$
with probability at least $1-2^{2-n}$.
Then apply Proposition~\ref{p: f est f}.

\bigskip

Now, if $\log\frac{N}{n}< C_{\text{\tiny\ref{p: prob w}}}\max\bigl(1,1/(p-2)\bigr)$
then it is enough to check that
$$\bigl\|(A_N)^TA_N\bigr\|_{2\to 2}\leq
\widetilde \nu(p)\max\limits_{i\leq N}\|X_i\|^2+
\widetilde \nu(p)B^{2/p}N$$
with probability $\geq 1-\frac{1}{n}$ for some non-increasing function $\nu(p):(2,\infty)\to\R_+$.
Set $n_0:=\bigl\lfloor N/\exp\bigl(C_{\text{\tiny\ref{p: prob w}}}\max(1,1/(p-2))\bigr)\bigr\rfloor$ and
consider an arbitrary partition $\{J_m\}_{m=1}^{\lceil n/n_0\rceil}$ of $[n]$ with
$\max\limits_{m}|J_m|\leq n_0$.
For every $m\leq \lceil n/n_0\rceil$, we let $A_N^m$ be the $[N]\times J_m$-submatrix of $A_N$.
Note that the rows of $A_N^m$ are isotropic (in $\R^{J_m}$), i.i.d., and satisfy the $p$-th moment condition
for one-dimensional projections.
Moreover, the ratio of the numbers of rows and columns in every matrix $A_N^m$
satisfies the assumptions of the first part of the theorem. Hence, by the above argument,
for any $m\leq  \lceil n/n_0\rceil$ we have
$$\bigl\|\bigl(A_N^m\bigr)^T A_N^m\bigr\|_{2\to 2}
\leq \nu'(p)\max\limits_{i\leq N}\|X_i\|^2+\nu'(p)B^{2/p}N$$
with probability at least $1-\frac{2}{{n_0}^2}-2^{2-n_0}$.
Since
$$\bigl\|(A_N)^TA_N\bigr\|_{2\to 2}\leq \sum_{m=1}^{\lceil n/n_0\rceil}\bigl\|(A_N^m)^T A_N^m\bigr\|_{2\to 2},$$
we obtain the result by taking the union bound.

Finaly, we may
use a standard linear algebraic argument to pass from isotropic distributions to all distributions from
the class $\Class(n,p,B)$.
\end{proof}

\bigskip

Let us briefly discuss optimality of the result obtained. As we already mentioned,
for $p=4$ the log-factor which appears in our bound in Theorem~\ref{t: main}, seems excessive.
In the range $2<p< 4$, the situation is more unclear to us.
We do not know whether the estimate for the difference $\|\CovM_N-\Id_n\|_{2\to 2}$
(for isotropic distributions) can be improved if we assume a strong concentration for the vector norm.
Let us formulate the problem in a more precise form:
\begin{problem}
Let $2<p<4$ and assume that $X$ is a centered $n$-dimensional isotropic random vector such that $\|X\|\leq C\sqrt{n}$ a.s.\
and $\sup\limits_{y\in\Sph^{n-1}}\Exp|\langle X,y\rangle|^p\leq C$ for a large universal constant $C>0$.
Let $N\geq n$ and let $X_1,X_2,\dots,X_N$ be independent copies of $X$.
As before, let $\CovM_N$ be the sample covariance matrix with respect to $X_1,X_2,\dots,X_N$.
Is it true that
$$\|\CovM_N-\Id_n\|_{2\to 2}\leq K\sqrt{\frac{n}{N}}$$
with probability close to one, where $K$ depends only on $p$?
\end{problem}
Let us remark that an example of P.~Yaskov \cite{Y15} which provides {\it upper} bounds for the smallest 
eigenvalue of $\CovM_N$ for certain isotropic distributions, does not resolve the above problem in negative as
an essential assumption in \cite{Y15} is a growth condition on the vector norm.

\bigskip

{\bf Acknowledgement.}
I would like to thank Nicole Tomczak-Jaegermann for her support, Alexander Litvak
for clarifying some arguments from \cite{GLPT15}, and
Shahar Mendelson and Ramon van Handel for a fruitful discussion.


\begin{thebibliography}{99}

\bibitem{ALPT10}
{
R. Adamczak, A.E. Litvak, A. Pajor, N. Tomczak-Jaegermann,
Quantitative estimates of the convergence of the empirical covariance matrix in log-concave ensembles,
J. Amer. Math. Soc. {\bf 23} (2010), no.~2, 535--561. MR2601042
}

\bibitem{ALPT11}
{
R. Adamczak, A.E. Litvak, A. Pajor, N. Tomczak-Jaegermann,
Sharp bounds on the rate of convergence of the empirical covariance matrix,
C. R. Math. Acad. Sci. Paris {\bf 349} (2011), no.~3-4, 195--200. MR2769907
}

\bibitem{BY93}
{
Z. D. Bai\ and\ Y. Q. Yin, Limit of the smallest eigenvalue of a large-dimensional sample covariance matrix,
Ann. Probab. {\bf 21} (1993), no.~3, 1275--1294. MR1235416
}

\bibitem{BL08a}
{
P. J. Bickel\ and\ E. Levina, Covariance regularization by thresholding,
Ann. Statist. {\bf 36} (2008), no.~6, 2577--2604. MR2485008
}

\bibitem{BL08b}
{
P. J. Bickel\ and\ E. Levina, Regularized estimation of large covariance matrices,
Ann. Statist. {\bf 36} (2008), no.~1, 199--227. MR2387969
}

\bibitem{B14}
{
J. Bourgain, An improved estimate in the restricted isometry problem,
in {\it Geometric aspects of functional analysis}, 65--70, Lecture Notes in Math., 2116, Springer, Cham. MR3364679
}

\bibitem{B96}
{
J. Bourgain, Random points in isotropic convex sets, in {\it Convex geometric analysis (Berkeley, CA, 1996)},
53--58, Math. Sci. Res. Inst. Publ., 34, Cambridge Univ. Press, Cambridge. MR1665576
}

\bibitem{CT15}
{
D. Chafa\"i, K. Tikhomirov,
On the convergence of the extremal eigenvalues of empirical covariance matrices with dependence,
Preprint. arXiv:1509.02231
}

\bibitem{FS10}
{
O. N. Feldheim\ and\ S. Sodin, A universality result for the smallest eigenvalues of certain sample covariance matrices,
Geom. Funct. Anal. {\bf 20} (2010), no.~1, 88--123. MR2647136
}

\bibitem{GLPT15}
{
O. Gu\'edon, A.E. Litvak, A. Pajor, N. Tomczak-Jaegermann,
On the interval of fluctuation of the singular values of random matrices,
J. Eur. Math. Soc. (JEMS), to appear.
}

\bibitem{GLPT14}
{
O. Gu\'edon, A.E. Litvak, A. Pajor, N. Tomczak-Jaegermann,
Restricted isometry property for random matrices with heavy-tailed columns,
C. R. Math. Acad. Sci. Paris {\bf 352} (2014), no.~5, 431--434. MR3194251
}

\bibitem{HR}
{
I. Haviv, O. Regev,
The Restricted Isometry Property of Subsampled Fourier Matrices, arXiv:1507.01768
}

\bibitem{Hoeffding}
{
W. Hoeffding, Probability inequalities for sums of bounded random variables,
J. Amer. Statist. Assoc. {\bf 58} (1963), 13--30. MR0144363
}

\bibitem{J01}
{
I. M. Johnstone, On the distribution of the largest eigenvalue in principal components analysis,
Ann. Statist. {\bf 29} (2001), no.~2, 295--327. MR1863961
}

\bibitem{KLS95}
{
R. Kannan, L. Lov\'asz\ and\ M. Simonovits, Random walks and an $O\sp *(n\sp 5)$ volume algorithm for convex bodies,
Random Structures Algorithms {\bf 11} (1997), no.~1, 1--50. MR1608200
}

\bibitem{Kesten}
{
H. Kesten, A sharper form of the Doeblin-L\'evy-Kolmogorov-Rogozin inequality for concentration functions,
Math. Scand. {\bf 25} (1969), 133--144. MR0258095
}

\bibitem{KM15}
{
V. Koltchinskii\ and\ S. Mendelson, Bounding the smallest singular value of a random matrix without concentration,
Int. Math. Res. Not. IMRN {\bf 2015}, no.~23, 12991--13008. MR3431642
}

\bibitem{LPRT05}
{
A. E. Litvak, A. Pajor, M. Rudelson, N. Tomczak-Jaegermann,
Smallest singular value of random matrices and geometry of random polytopes,
Adv. Math. {\bf 195} (2005), no.~2, 491--523. MR2146352
}

\bibitem{MP12}
{
S. Mendelson\ and\ G. Paouris,
On generic chaining and the smallest singular value of random matrices with heavy tails,
J. Funct. Anal. {\bf 262} (2012), no.~9, 3775--3811. MR2899978
}

\bibitem{MP14}
{
S. Mendelson\ and\ G. Paouris, On the singular values of random matrices,
J. Eur. Math. Soc. (JEMS) {\bf 16} (2014), no.~4, 823--834. MR3191978
}

\bibitem{PY14}
{
N. S. Pillai\ and\ J. Yin, Universality of covariance matrices, Ann. Appl. Probab. {\bf 24} (2014), no.~3, 935--1001. MR3199978
}

\bibitem{R99}
{
M. Rudelson, Random vectors in the isotropic position, J. Funct. Anal. {\bf 164} (1999), no.~1, 60--72. MR1694526
}

\bibitem{RV08}
{
M. Rudelson\ and\ R. Vershynin, On sparse reconstruction from Fourier and Gaussian measurements,
Comm. Pure Appl. Math. {\bf 61} (2008), no.~8, 1025--1045. MR2417886
}

\bibitem{RV10}
{
M. Rudelson\ and\ R. Vershynin, Non-asymptotic theory of random matrices: extreme singular values,
in {\it Proceedings of the International Congress of Mathematicians. Volume III}, 1576--1602,
Hindustan Book Agency, New Delhi. MR2827856
}

\bibitem{RV09}
{
M. Rudelson\ and\ R. Vershynin,
Smallest singular value of a random rectangular matrix,
Comm. Pure Appl. Math. {\bf 62} (2009), no.~12, 1707--1739. MR2569075
}

\bibitem{SV13}
{
N. Srivastava\ and\ R. Vershynin, Covariance estimation for distributions with $2+\varepsilon$ moments,
Ann. Probab. {\bf 41} (2013), no.~5, 3081--3111. MR3127875
}

\bibitem{T_AiM}
{
K. Tikhomirov, The limit of the smallest singular value of random matrices with i.i.d. entries,
Adv. Math. {\bf 284} (2015), 1--20. MR3391069
}

\bibitem{T_IJM}
{
K. E. Tikhomirov, The smallest singular value of random rectangular matrices with no moment assumptions on entries,
Israel J. Math. {\bf 212} (2016), no.~1, 289--314. MR3504328
}

\bibitem{V12a}
{
R. Vershynin, How close is the sample covariance matrix to the actual covariance matrix?,
J. Theoret. Probab. {\bf 25} (2012), no.~3, 655--686. MR2956207
}

\bibitem{V12b}
{
R. Vershynin, Introduction to the non-asymptotic analysis of random matrices, in {\it Compressed sensing},
210--268, Cambridge Univ. Press, Cambridge. MR2963170
}

\bibitem{Y14}
{
P. Yaskov, Lower bounds on the smallest eigenvalue of a sample covariance matrix,
Electron. Commun. Probab. {\bf 19} (2014), no. 83, 10 pp. MR3291620
}

\bibitem{Y15}
{
P. Yaskov, Sharp lower bounds on the least singular value of a random matrix without the fourth moment condition,
Electron. Commun. Probab. {\bf 20} (2015), no. 44, 9 pp. MR3358966
}

\bibitem{YBK88}
{
Y. Q. Yin, Z. D. Bai\ and\ P. R. Krishnaiah, On the limit of the largest eigenvalue of the large-dimensional sample covariance matrix,
Probab. Theory Related Fields {\bf 78} (1988), no.~4, 509--521. MR0950344
}

\end{thebibliography}
\end{document}